\newtheorem{theorem}{Theorem}[section]
\newtheorem{prop}[theorem]{Proposition}
\newtheorem{defn}[theorem]{Definition}
\newtheorem{lemma}[theorem]{Lemma}
\newtheorem{coro}[theorem]{Corollary}
\newtheorem{prop-def}{Proposition-Definition}[section]
\newtheorem{coro-def}{Corollary-Definition}[section]
\newtheorem{remark}{Remark}[section]
\newtheorem{exam}{Example}[section]
\newcommand{\nc}{\newcommand}
\nc{\tred}[1]{\textcolor{red}{#1}}
\nc{\tblue}[1]{\textcolor{blue}{#1}}
\nc{\tgreen}[1]{\textcolor{green}{#1}}
\nc{\tpurple}[1]{\textcolor{purple}{#1}}
\nc{\btred}[1]{\textcolor{red}{\bf #1}}
\nc{\btblue}[1]{\textcolor{blue}{\bf #1}}
\nc{\btgreen}[1]{\textcolor{green}{\bf #1}}
\nc{\btpurple}[1]{\textcolor{purple}{\bf #1}}
\newcommand{\efootnote}[1]{}
\renewcommand{\textbf}[1]{}
\newcommand{\delete}[1]{}
\nc{\mlabel}[1]{\label{#1}}  
\nc{\mcite}[1]{\cite{#1}}  
\nc{\mref}[1]{\ref{#1}}  
\nc{\mcite}[1]{\cite{#1}{{\bf{{\ }(#1)}}}}  
\nc{\mlabel}[1]{\label{#1}  
{\hfill \hspace{1cm}{\bf{{\ }\hfill(#1)}}}}
\nc{\mcite}[1]{\cite{#1}{{\bf{{\ }(#1)}}}}  
\nc{\mref}[1]{\ref{#1}{{\bf{{\ }(#1)}}}}  
\nc{\mbibitem}[1]{\bibitem[\bf #1]{#1}} 
\nc{\mkeep}[1]{\marginpar{{\bf #1}}} 
\nc{\lie}{\mathfrak{g}}
\nc{\bra}{[\cdot,\cdot]}
\nc{\gl}{\mathfrak{g}\mathfrak{l}}
\nc{\Mnk}{\bf M^{n}(k)}
\nc{\E}{{\rm End_{\bfk}}}
\nc{\tr}{\rm Tr}
\nc{\rank}{\rm rank}
\nc{\spa}{\rm span}
\nc{\ad}{{\rm ad}}
\nc{\dep}{{\rm dep}}
\nc{\diag}{{\rm diag}}
\nc{\Id}{{\rm Id}}
\nc{\Ixq}{I_{P}}
\nc{\bin}[2]{ (_{\stackrel{\scs{#1}}{\scs{#2}}})}  
\nc{\binc}[2]{ \left (\!\! \begin{array}{c} \scs{#1}\\
    \scs{#2} \end{array}\!\! \right )}  
\nc{\bincc}[2]{  \left ( {\scs{#1} \atop
    \vspace{-1cm}\scs{#2}} \right )}  
\nc{\bs}{\bar{S}} \nc{\cosum}{\sqsubset} \nc{\la}{\longrightarrow}
\nc{\rar}{\rightarrow} \nc{\dar}{\downarrow} \nc{\dprod}{**}
\nc{\dap}[1]{\downarrow \rlap{$\scriptstyle{#1}$}}
\nc{\md}{\mathrm{dth}} \nc{\uap}[1]{\uparrow
\rlap{$\scriptstyle{#1}$}} \nc{\defeq}{\stackrel{\rm def}{=}}
\nc{\disp}[1]{\displaystyle{#1}} \nc{\dotcup}{\
\displaystyle{\bigcup^\bullet}\ } \nc{\gzeta}{\bar{\zeta}}
\nc{\hcm}{\ \hat{,}\ } \nc{\hts}{\hat{\otimes}}
\nc{\barot}{{\otimes}} \nc{\free}[1]{\bar{#1}}
\nc{\uni}[1]{\tilde{#1}} \nc{\hcirc}{\hat{\circ}} \nc{\lleft}{[}
\nc{\lright}{]} \nc{\lc}{\lfloor} \nc{\rc}{\rfloor}
\nc{\curlyl}{\left \{ \begin{array}{c} {} \\ {} \end{array}
    \right .  \!\!\!\!\!\!\!}
\nc{\curlyr}{ \!\!\!\!\!\!\!
    \left . \begin{array}{c} {} \\ {} \end{array}
    \right \} }
\nc{\longmid}{\left | \begin{array}{c} {} \\ {} \end{array}
    \right . \!\!\!\!\!\!\!}
\nc{\onetree}{\bullet} \nc{\ora}[1]{\stackrel{#1}{\rar}}
\nc{\ola}[1]{\stackrel{#1}{\la}}
\nc{\ot}{\otimes} \nc{\mot}{{{\boxtimes\,}}}
\nc{\otm}{\overline{\boxtimes}} \nc{\sprod}{\bullet}
\nc{\scs}[1]{\scriptstyle{#1}} \nc{\mrm}[1]{{\rm #1}}
\nc{\margin}[1]{\marginpar{\rm #1}}   
\nc{\dirlim}{\displaystyle{\lim_{\longrightarrow}}\,}
\nc{\invlim}{\displaystyle{\lim_{\longleftarrow}}\,}
\nc{\mvp}{\vspace{0.3cm}} \nc{\tk}{^{(k)}} \nc{\tp}{^\prime}
\nc{\ttp}{^{\prime\prime}} \nc{\svp}{\vspace{2cm}}
\nc{\vp}{\vspace{8cm}} \nc{\proofbegin}{\noindent{\bf Proof: }}
\nc{\proofend}{$\blacksquare$ \vspace{0.3cm}}
\nc{\modg}[1]{\!<\!\!{#1}\!\!>}
\nc{\intg}[1]{F_C(#1)} \nc{\lmodg}{\!
<\!\!} \nc{\rmodg}{\!\!>\!}
\nc{\cpi}{\widehat{\Pi}}
\nc{\sha}{{\mbox{\cyr X}}}  
\nc{\shap}{{\mbox{\cyrs X}}} 
\nc{\shpr}{\diamond}    
\nc{\shp}{\ast} \nc{\shplus}{\shpr^+}
\nc{\shprc}{\shpr_c}    
\nc{\msh}{\ast} \nc{\zprod}{m_0} \nc{\oprod}{m_1}
\nc{\vep}{\varepsilon} \nc{\labs}{\mid\!} \nc{\rabs}{\!\mid}
\nc{\mmbox}[1]{\mbox{\ #1\ }} \nc{\fp}{\mrm{FP}}
\nc{\rchar}{\mrm{char}} \nc{\End}{\mrm{End}} \nc{\Fil}{\mrm{Fil}}
\nc{\Mor}{Mor\xspace} \nc{\gmzvs}{gMZV\xspace}
\nc{\gmzv}{gMZV\xspace} \nc{\mzv}{MZV\xspace}
\nc{\mzvs}{MZVs\xspace} \nc{\Hom}{\mrm{Hom}} \nc{\id}{\mrm{id}}
\nc{\im}{\mrm{im}} \nc{\incl}{\mrm{incl}} \nc{\map}{\mrm{Map}}
\nc{\mchar}{\rm char} \nc{\nz}{\rm NZ} \nc{\supp}{\mathrm Supp}
\nc{\Alg}{\mathbf{Alg}} \nc{\Bax}{\mathbf{Bax}} \nc{\bff}{\mathbf f}
\nc{\bfk}{{\bf k}} \nc{\bfone}{{\bf 1}} \nc{\bfx}{\mathbf x}
\nc{\bfy}{\mathbf y}
\nc{\base}[1]{\bfone^{\otimes ({#1}+1)}} 
\nc{\Cat}{\mathbf{Cat}}
\nc{\detail}{\marginpar{\bf More detail}
    \noindent{\bf Need more detail!}
    \svp}
\nc{\Int}{\mathbf{Int}} \nc{\Mon}{\mathbf{Mon}}
\nc{\rbtm}{{shuffle }} \nc{\rbto}{{Rota-Baxter }}
\nc{\remarks}{\noindent{\bf Remarks: }} \nc{\Rings}{\mathbf{Rings}}
\nc{\Sets}{\mathbf{Sets}} \nc{\wtot}{\widetilde{\odot}}
\nc{\wast}{\widetilde{\ast}} \nc{\bodot}{\bar{\odot}}
\nc{\bast}{\bar{\ast}} \nc{\hodot}[1]{\odot^{#1}}
\nc{\hast}[1]{\ast^{#1}} \nc{\mal}{\mathcal{O}}
\nc{\tet}{\tilde{\ast}} \nc{\teot}{\tilde{\odot}}
\nc{\oex}{\overline{x}} \nc{\oey}{\overline{y}}
\nc{\oez}{\overline{z}} \nc{\oef}{\overline{f}}
\nc{\oea}{\overline{a}} \nc{\oeb}{\overline{b}}
\nc{\weast}[1]{\widetilde{\ast}^{#1}}
\nc{\weodot}[1]{\widetilde{\odot}^{#1}} \nc{\hstar}[1]{\star^{#1}}
\nc{\lae}{\langle} \nc{\rae}{\rangle}
\nc{\lf}{\lfloor}\nc{\rf}{\rfloor}
\nc{\cala}{{\mathcal A}} \nc{\calb}{{\mathcal B}}
\nc{\calc}{{\mathcal C}}
\nc{\cald}{{\mathcal D}} \nc{\cale}{{\mathcal E}}
\nc{\calf}{{\mathcal F}} \nc{\calg}{{\mathcal G}}
\nc{\calh}{{\mathcal H}} \nc{\cali}{{\mathcal I}}
\nc{\calj}{{\mathcal J}} \nc{\calk}{{\mathcal K}} \nc{\call}{{\mathcal L}} \nc{\calm}{{\mathcal M}}
\nc{\caln}{{\mathcal N}} \nc{\calo}{{\mathcal O}}
\nc{\calp}{{\mathcal P}} \nc{\calr}{{\mathcal R}}
\nc{\cals}{{\mathcal S}} \nc{\calt}{{\mathcal T}}
\nc{\calu}{{\mathcal U}} \nc{\calv}{{\mathcal V}}
\nc{\calw}{{\mathcal W}}
\nc{\calx}{{\mathcal X}} \nc{\CA}{\mathcal{A}}
\nc{\fraka}{{\mathfrak a}} \nc{\frakA}{{\mathfrak A}}
\nc{\frakb}{{\mathfrak b}} \nc{\frakB}{{\mathfrak B}}
\nc{\frakD}{{\mathfrak D}} \nc{\frakg}{{\mathfrak g}}
\nc{\frakH}{{\mathfrak H}} \nc{\frakL}{{\mathfrak L}}
\nc{\frakM}{{\mathfrak M}} \nc{\bfrakM}{\overline{\frakM}}
\nc{\frakm}{{\mathfrak m}} \nc{\frakP}{{\mathfrak P}}
\nc{\frakN}{{\mathfrak N}} \nc{\frakp}{{\mathfrak p}}
\nc{\frakS}{{\mathfrak S}} \nc{\frakW}{{\mathfrak W}}
\nc{\frakw}{{\mathfrak w}}
\font\cyr=wncyr10 \font\cyrs=wncyr7
\nc{\li}[1]{\textcolor{red}{LG:#1}}
\nc{\jun}[1]{\textcolor{blue}{#1}}
\definecolor{RED}{rgb}{1,0,0}\definecolor{BLUE}{rgb}{0,0,1} 
\providecommand{\DIFaddend}{} 
\begin{document}

\title{Representations of Polynomial Rota--Baxter Algebras}
%
\author{Li Qiao}
\address{School of Mathematics and Statistics,
Lanzhou University,
Lanzhou 730000, Gansu, China}
\email{ liqiaomail@126.com}

\author{Jun Pei}
\address{School of Mathematics and Statistics, Southwest University, Chongqing 400715, China}
         \email{peitsun@swu.edu.cn}

\date{\today}

\begin{abstract}
A Rota--Baxter operator is an algebraic abstraction of integration, which is the typical example of a weight zero Rota-Baxter operator. We show that studying the modules over the polynomial Rota--Baxter algebra $(\bfk[x],P)$ is equivalent to studying the modules over the Jordan plane, and we generalize the direct decomposability results for the $(\bfk[x],P)$-modules in \cite{Iy} from algebraically closed fields of characteristic zero to fields of characteristic zero. Furthermore, we provide a classification of Rota--Baxter modules up to isomorphism based on indecomposable $\bfk[x]$-modules.
\end{abstract}

\subjclass[2010]{16W99, 16G99, 45N05, 12H20}

\keywords{integration, Jordan plane, module, polynomial Rota--Baxter algebra, Rota--Baxter operator}

\maketitle

\tableofcontents

\setcounter{section}{0}

\section{Introduction}
The notion of a Rota--Baxter algebra was introduced by Baxter \cite{Baxter}
based on his probability study to understand Spitzer's identity in
fluctuation, where it is a $\bfk$-algebra $R$ with a $\bfk$-linear operator $P$ that satisfies
\begin{equation}\label{equ:rbequ}
P(u)P(v) = P(uP(v))+ P(P(u)v)  + \lambda P(uv), \quad \forall u, v \in R, \lambda \in \bfk.
\end{equation}
The operator $P$ is called a Rota--Baxter operator of weight $\lambda$ on $R$. Rota--Baxter operators are rooted deeply in analysis. A Rota--Baxter algebra can be regarded as the integral analogue of a differential algebra. More fundamentally, in the case where $\lambda = 0$, the Rota--Baxter equation Eq.(\ref{equ:rbequ}) is an algebraic abstraction
of the integration by parts formula for calculus:
$$
FG\big|_{0}^{x} = \int_{0}^{x}F'G+\int_{0}^{x}FG'.
$$

Throughout the 1960s, Rota--Baxter operators were studied widely by researchers such as Atkinson \cite{Atk}. In the 1960s and 1970s, Rota \cite{Rota1,Rota2} and Cartier \cite{C} developed Rota--Baxter operators related to combinatorics. In the late 1990s, the operator appeared again as a fundamental algebraic structure in a study by Connes and Kreimer \cite{CK1} regarding the renormalization of quantum field theory. Subsequently, important theoretical developments and applications of Rota--Baxter algebras have occurred in areas such as mathematical physics, operads, number theory, and combinatorics. Readers may refer to the book by Guo \cite{Guo} for further details of their development.

At present, much of the research into certain algebraic objects in mathematics
involves the theoretical study of their representations. However, the representation theory of Rota--Baxter algebras is still in the early stage of development. The concept and some basic properties of representations of Rota--Baxter algebras were introduced by Guo et al. \cite{GQ}. Similar to the representations of differential algebras, which can be considered as a vector bundle with a connection, a representation of a Rota--Baxter algebra
comprises a module of the algebra together with an integral operator $p$. The
representations and regular-singular decomposition of Laurent series Rota--Baxter algebra were studied by Lin and Qiao \cite{LQ}, and they are important for the renormalization of quantum field theory.

In this study, we revisit the analytical origin of Rota--Baxter operators in order to study the representation theory of Rota--Baxter algebras of weight zero, where $\bfk[x]$ plays a central role in both the analysis and algebra. If $P$ is the standard integral operator on $\bfk[x]$, then $(\bfk[x],P)$ is a Rota--Baxter algebra of weight zero. This provides an ideal testing ground for understanding the interaction between analytically defined Rota--Baxter operators and algebraically defined Rota--Baxter operators.

\subsection{Overview of our results and methods}
The main aim of this study is to investigate finite dimensional $(\bfk[x],P)$-modules. It is well known that the category of modules over a differential algebra is equivalent to the category of modules over its corresponding algebra of differential operators. Thus, our first step involves transforming the problem concerning representations of $(\bfk[x],P)$ into the problem of representing a certain type of algebra in the usual sense. This problem is considered in Subsection \ref{Jordan}, where we show that studying the $(\bfk[x],P)$-modules is equivalent to studying the modules of $\mathcal{J}=\bfk\langle x,y \rangle/(xy-yx-y^{2})$, which is a special example of Ore extensions.

The Ore extension $R[x;\sigma,\delta]$ is the noncommutative ring obtained by giving $R[x]$ a new multiplication subject to the identity $xr=\sigma(r)x+\delta(r)$, where $R$ is a $\bfk$-algebra, $\sigma \in {\rm End}_{\bfk}(R)$, and $\delta$ is a $\sigma$-derivation of $R$. The Ore extensions where $\bfk[y]$ is the underlying ring belong to three specific types, i.e., quantum planes, quantum Weyl algebras, or algebras comprising $\mathcal{A}_{h}=\bfk\langle x,y \rangle/(xy-yx-h)$, where $h \in \bfk[y]$ (see \mbox{\cite{Avv}}). Thus, studying \DIFaddend $(\bfk[x],P)$-modules is equivalent to studying modules of the Jordan plane $\mathcal{J}=\mathcal{A}_{y^{2}}$, and thus the known results for $\mathcal{A}_{y^{2}}$-modules can immediately give their corresponding descriptions of $(\bfk[x],P)$-modules.

The Jordan plane $\mathcal{A}_{y^{2}}$ arises in noncommutative algebraic geometry \cite{Ars,Sz} and it exhibits many interesting features such as being Artin--Schelter regular of dimension $2$. In several studies \cite{Sh1,Sh2,Sh3}, Shirikov extensively investigated the automorphisms, derivations, prime ideals, and modules of the algebra $\mathcal{A}_{y^{2}}$. These investigations were extended by Iyudu \cite{Iy} to include results based on various finite-dimensional modules of $A_{y^{2}}$ over algebraically closed fields of characteristic zero. For finite dimensional modules, Iyudu gave a complete description of the irreducible $\mathcal{A}_{y^{2}}$-module and a necessary condition for indecomposable $\mathcal{A}_{y^{2}}$-modules, which implies the direct decomposition of $\mathcal{A}_{y^{2}}$-modules. Recent studies by Benkart et al. \cite{BLO1,BLO2,BLO3} investigated the family of algebras comprising $\mathcal{A}_{h}$. In \cite{BLO1,BLO3}, they studied the structure of $\mathcal{A}_{h}$ and determined its automorphism group and derivations. In \cite{BLO2}, they gave an example of an indecomposable module of dimension $n+1$ of $\mathcal{A}_{h}$ and completely characterized the irreducible modules of $\mathcal{A}_{h}$ for arbitrary field, thereby generalizing the results obtained for irreducible modules by \mbox{\cite{Iy}}. Thus, we obtain the description of irreducible $(\bfk[x],P)$-modules by interpreting the descriptions of the irreducible $\mathcal{A}_{y^{2}}$-module in terms of the $(\bfk[x],P)$-module language.

In the remainder of Section \ref{sec:general}, by determining the $(\bfk[x],P)$-module structures on a given $\bfk[x]$-module, we generalize the results of regarding the direct decomposability of $(\bfk[x],P)$-modules given by \cite{Iy} from algebraically closed fields of characteristic zero to fields of characteristic zero. This result allows us to reduce the problem of finding the structure of $(\bfk[x],P)$-module $(M,p)$ to the problem of finding the structure of its submodule.

In Section \ref{sec:classify}, as a special case, when $\bfk$ is an algebraically closed field of characteristic zero, we completely and explicitly characterize the Rota--Baxter module structures on the indecomposable $\bfk[x]$-modules.

\section{Representations of $(\bfk[x],P)$}\label{sec:general}
In this section, after some basic definitions, we show that the representations of $(\bfk[x],P)$ are equivalent to representations of the Jordan plane. We then provide a direct decomposition of
$(\bfk[x],P)$-modules over a field of characteristic zero.

\subsection{Representations of $(\bfk[x],P)$ and the Jordan plane} \label{Jordan}
\begin{defn}
{\rm Let $\bfk$ be a field and $(R,P)$ a Rota--Baxter $\bfk$-algebra of weight zero. A (left) Rota--Baxter module over
$(R,P)$ or simply an (left) $(R,P)$-module is a pair $(M,p)$, where $M$ is an
 $R$-module and $p: M \longrightarrow M$ is a $\bfk$-linear map that satisfies
\begin{equation}\label{eq:meq}
P(r)p(m) = p(P(r)m +rp(m)), \quad \forall ~r\in R,  \forall m\in M.
\end{equation}
}
\end{defn}
If we let $(M,p)$ be an $(R,P)$-module, then $M$ is a $\bfk$-vector space.
If $\dim_{\bfk}M < +\infty$, then $(M,p)$ is called a finite dimensional
$(R,P)$-module. In the following, all $(R,P)$-modules are
assumed to be finite dimensional.

Basic concepts regarding an $R$-module can be defined in a similar manner for $(R,P)$-modules.
In particular, an $(R,P)$-module homomorphism
$\phi : (M,p) \longrightarrow (N,q)$ is an $R$-module homomorphism
$\phi: M \longrightarrow M$ that satisfies
$$
\phi \circ p = q \circ \phi.
$$
Furthermore, $(M,p)$ is isomorphic to $(N,q)$ if the homomorphism $\phi$ is bijective.
It is simple to check that $\bigoplus_{i=1}^{n}(M_{i},p_{i}) = (\bigoplus_{i=1}^{n} M_{i},\displaystyle  \sum_{i=1}^{n} p_{i}),$ where $\displaystyle \sum_{i=1}^{n}p_{i}$ is defined by
$$
\left(\sum_{i=1}^{n}p_{i} \right) (u_{1},\cdots ,u_{n}) = \sum_{i=1}^{n}p_{i}(u_{i}),
$$
is still an $(R,P)$-module and it is called the direct sum of $(R,P)$-modules $(M_{1},p_{1}), \cdots, (M_{n},p_{n})$.

\begin{remark}
Recall that $(\bfk[x],P)$ is a Rota--Baxter algebra, where $P: \bfk[x] \longrightarrow \bfk[x]$ is defined by $P(x^{m}) = \frac{1}{m+1}x^{m+1}, m \in \mathbb{N}$ (i.e., the standard integral operator). Thus, $P$ is only well defined over a field of characteristic zero. In the following, we let $\bfk$ be a field of characteristic zero, unless stated otherwise.
\end{remark}

Let $(\bfk[x],P)$ be the polynomial Rota--Baxter algebra and $\bfk \langle x,y\rangle$ the noncommutative polynomial algebra with variables $x$ and $y$. Let $\Ixq$ be the ideal of $\bfk \langle x,y\rangle$ generated by the set
\begin{equation}
\mathcal{X}_{0}=\{P(f)y-yP(f)-yfy~|~f\in \bfk[x]\},
\end{equation}
and $\mathcal{J} = \bfk \langle x,y\rangle/\Ixq$.

For any $m\in \mathbb{N}$, we have
$$
P(x^{m})y-yP(x^{m})-yx^{m}y=\frac{x^{m+1}}{m+1}y-\frac{1}{m+1}yx^{m+1}-yx^{m}y.
$$
The integral operator $P$ is $\bfk$-linear, so the set
$$
\mathcal{X}_{1}=\{x^{m+1}y-yx^{m+1}-(m+1)yx^{m}y~|~m\in \mathbb{N}\}
$$
also generates $\Ixq$.

Next, we establish the relationship between $\mathcal{J}$-modules and $(\bfk[x],P)$-modules. Recall that a $(\bfk[x],P)$-module is a pair $(M,p)$, where $M$ is a $\bfk[x]$-module and $p \in {\rm End}_{\bfk}(M)$ such that
\begin{equation}
P(f)p(v)=p(P(f)v+fp(v)), \quad \forall f \in \bfk[x],~~ \forall v \in M.
\end{equation}

\begin{prop}\label{prop:modulerel}
Let $M$ be a $\mathcal{J}$-module. Define a $\bfk$-linear map $p$ on $M$ by
$$
p(v) = yv, \quad \forall v \in M.
$$
Then, $(M,p)$ is a $(\bfk[x],P)$-module. Conversely, if $(M,p)$ is a $(\bfk[x],P)$-module and we define
$$
yv = p(v), \quad \forall v \in M,
$$
then $M$ is a $\mathcal{J}$-module.
\end{prop}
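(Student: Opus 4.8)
The plan is to prove both directions by one short computation: the ideal $\Ixq$ has been set up precisely so that its generators, acting on a module, reproduce the Rota--Baxter module axiom Eq.~\eqref{eq:meq} written in terms of left multiplication by $y$.

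For the first direction I would start from a $\mathcal{J}$-module $M$. Restricting scalars along the inclusion $\bfk[x]\hookrightarrow\mathcal{J}$ makes $M$ a $\bfk[x]$-module, and $p\colon v\mapsto yv$ is $\bfk$-linear since left multiplication by any element of $\mathcal{J}$ is. To check Eq.~\eqref{eq:meq}, I would expand, for $f\in\bfk[x]$ and $v\in M$,
\[
P(f)p(v)-p\bigl(P(f)v+fp(v)\bigr)=\bigl(P(f)y\bigr)v-\bigl(yP(f)\bigr)v-\bigl(yfy\bigr)v=\bigl(P(f)y-yP(f)-yfy\bigr)v,
\]
which is $0$ because $P(f)y-yP(f)-yfy\in\mathcal{X}_{0}\subseteq\Ixq$ annihilates $M$. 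Hence $(M,p)$ is a $(\bfk[x],P)$-module.

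For the converse, given a $(\bfk[x],P)$-module $(M,p)$, I would let $\bfk\langle x,y\rangle$ act on $M$ by letting $x$ act through the given $\bfk[x]$-structure and $y$ act as $p$ (so a word acts as the corresponding composite of operators); this is the unique $\bfk$-algebra map $\bfk\langle x,y\rangle\to\End_{\bfk}(M)$ with those values on the generators. It descends to $\mathcal{J}=\bfk\langle x,y\rangle/\Ixq$ if and only if every generator of $\Ixq$ kills $M$, and since $\mathcal{X}_{0}$ generates $\Ixq$ it is enough to check $\bigl(P(f)y-yP(f)-yfy\bigr)v=0$; running the displayed identity backwards, this expression equals $P(f)p(v)-p\bigl(P(f)v+fp(v)\bigr)=0$ by Eq.~\eqref{eq:meq}. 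So the action descends, making $M$ a $\mathcal{J}$-module in which $yv=p(v)$.

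There is no serious obstacle here: the whole content is the bookkeeping identity above. The points to be careful about are (i) that restriction along $\bfk[x]\hookrightarrow\mathcal{J}$ matches the two module structures, so that passing back and forth does not alter the underlying $\bfk[x]$-module, and (ii) that it suffices to test the relations on a generating set of $\Ixq$ --- equivalently one may use $\mathcal{X}_{1}$ and verify $x^{m+1}y-yx^{m+1}-(m+1)yx^{m}y=0$ for all $m\in\mathbb{N}$, which by $\bfk$-linearity of $P$ is the same condition. Although the statement only records the two constructions, I would also remark that they are mutually inverse and that, since an $R$-module homomorphism commuting with $p$ is exactly a $\mathcal{J}$-module homomorphism, one in fact obtains an isomorphism between the category of $\mathcal{J}$-modules and the category of $(\bfk[x],P)$-modules.
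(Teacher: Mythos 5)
Your proposal is correct and follows essentially the same route as the paper: in both directions one observes that the generators of $\Ixq$ (the set $\mathcal{X}_{0}$, or equivalently $\mathcal{X}_{1}$) annihilate $M$ exactly when Eq.~(\ref{eq:meq}) holds with $y$ acting as $p$, the paper phrasing this on the monomials $x^{m}$ and you on general $f\in\bfk[x]$, which is the same computation by $\bfk$-linearity of $P$. Your added remarks on checking relations only on a generating set and on the resulting category isomorphism are harmless elaborations, not a different method.
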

\begin{proof}
We note that the equations
$$
P(x^{m})y-yP(x^{m})-yx^{m}y =0, \quad m \in \mathbb{N},
$$
hold in $\mathcal{J}$. Thus, for any $v \in M$, we have
$$
(P(x^{m})y-yP(x^{m})-yx^{m}y)(v)=0,
$$
i.e.,
$$
P(x^{m})p(v)=p(P(x^{m})v+x^{m}p(v)).
$$
Hence, $(M,p)$ is a $(\bfk[x],P)$-module.

Conversely, $M$ is now a $\bfk\langle x,y \rangle$-module. $(M,p)$ is a $(\bfk[x],P)$-module, so we have
$$
\Ixq \subseteq {\rm ann}M = \{F \in \bfk\langle x,y\rangle~|~Fv=0, \mbox{for all}~v \in M\}.
$$
Thus, $M$ is a $\mathcal{J}$-module.
\end{proof}

Due to Proposition \ref{prop:modulerel}, the study of $(\bfk[x],P)$-modules becomes the study of $\mathcal{J}$-modules in the usual sense.

\begin{lemma}\label{lem:simplify}
If we let $\mathcal{X}=\{xy-yx-y^{2}\}$, then $\Ixq$ is generated by $\mathcal{X}$, i.e., $\Ixq := \Id(\mathcal{X})$.
\end{lemma}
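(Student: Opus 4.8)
The plan is to prove the two inclusions $\Id(\mathcal{X}) \subseteq \Ixq$ and $\Ixq \subseteq \Id(\mathcal{X})$ separately, using the description of $\Ixq$ as the ideal generated by $\mathcal{X}_1 = \{x^{m+1}y - yx^{m+1} - (m+1)yx^{m}y \mid m \in \mathbb{N}\}$ obtained just above. The inclusion $\Id(\mathcal{X}) \subseteq \Ixq$ is immediate, since the single generator $xy - yx - y^2$ of $\mathcal{X}$ is exactly the element of $\mathcal{X}_1$ with $m = 0$, so $\mathcal{X}\subseteq\Ixq$.

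For the reverse inclusion it suffices to show that each generator of $\mathcal{X}_1$ lies in $\Id(\mathcal{X})$; equivalently, writing $\equiv$ for congruence modulo $\Id(\mathcal{X})$ in $\bfk\langle x,y\rangle$, it suffices to prove that $x^{n}y \equiv yx^{n} + n\,yx^{n-1}y$ for all $n \geq 1$. I would argue by induction on $n$. The base case $n = 1$ is the defining congruence $xy \equiv yx + y^2$. For the inductive step I peel off the rightmost factor $x$, so that every substitution is applied to an adjacent pair $xy$: from $x^{n+1}y = x^{n}(xy) \equiv x^{n}(yx + y^2) = (x^{n}y)x + (x^{n}y)y$ together with the induction hypothesis one obtains
\[
x^{n+1}y \equiv yx^{n+1} + yx^{n}y + n\,yx^{n-1}(yx + y^2),
\]
and replacing $yx + y^2$ by $xy$ collapses the final summand to $n\,yx^{n-1}(xy) = n\,yx^{n}y$, which gives $x^{n+1}y \equiv yx^{n+1} + (n+1)yx^{n}y$, completing the induction. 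Hence $\Ixq \subseteq \Id(\mathcal{X})$, and together with the first inclusion this yields $\Ixq = \Id(\mathcal{X})$.

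All of these computations are elementary; the only subtlety is bookkeeping around the noncommutativity of $\bfk\langle x,y\rangle$, which is why the inductive step is organized so that the relation $xy \equiv yx + y^2$ is always applied to a genuinely adjacent $xy$ rather than to a pair separated by other letters. I do not expect any serious obstacle.
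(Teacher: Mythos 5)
Your proof is correct and follows essentially the same route as the paper: an induction on the exponent showing each generator $x^{m+1}y-yx^{m+1}-(m+1)yx^{m}y$ of $\mathcal{X}_1$ lies in $\Id(\mathcal{X})$, with the same two rewriting steps (peel off $x^{n}(xy)\equiv x^{n}(yx+y^{2})$, then apply the induction hypothesis and the relation once more). The only difference is presentational: you compute with congruences modulo $\Id(\mathcal{X})$, while the paper writes out the explicit ideal decomposition $(x^{m}-myx^{m-1})(yx-xy+y^{2})+(x^{m}y-yx^{m}-myx^{m-1}y)(x+y)$, which your chain of congruences reproduces implicitly.
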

\begin{proof}
Obviously, we have $\Id(\mathcal{X}) \subseteq \Ixq$. Conversely, we need to show that $\Ixq \subseteq \Id(\mathcal{X})$. We use induction on $m$ to show that $x^{m+1}y-yx^{m+1}-(m+1)yx^{m}y$ is in $\Id(\mathcal{X})$, but there is no need to consider $m=0$. Assume that for any $m-1 \geq 0$, $x^{m}y-yx^{m}-myx^{m-1}y \in \Id(\mathcal{X})$. For $m \geq 1$, we have
\begin{eqnarray*}
&&x^{m+1}y-yx^{m+1}-(m+1)yx^{m}y \\
&&= x^{m+1}y-x^{m}yx-x^{m}y^{2} + x^{m}yx+x^{m}y^{2}- yx^{m+1}-(m+1)yx^{m}y\\
&&=x^{m}(xy-yx-y^{2})+(x^{m}y-yx^{m}-myx^{m-1}y)x+myx^{m-1}y x+x^{m}y^{2}-(m+1)yx^{m}y,
\end{eqnarray*}
and
\begin{eqnarray*}
&& myx^{m-1}y x+x^{m}y^{2}-(m+1)yx^{m}y \\
 &&=myx^{m-1}(yx - xy+y^{2})+(x^{m}y-yx^{m}-myx^{m-1}y)y.
\end{eqnarray*}
Then,
\begin{eqnarray*}
&&x^{m+1}y-yx^{m+1}-(m+1)yx^{m}y \\
&&=(x^{m}-myx^{m-1})(yx - xy+y^{2})+(x^{m}y-yx^{m}-myx^{m-1}y)(x+y).
\end{eqnarray*}
By the induction hypothesis, we obtain $x^{m+1}y-yx^{m+1}-(m+1)yx^{m}y \in \Id(\mathcal{X})$, i.e., $\mathcal{X}_{1} \subset \Id(\mathcal{X})$ and $\Ixq=\Id(\mathcal{X}_{1}) \subseteq \Id(\mathcal{X})$. Therefore,
$\Ixq=\Id(\mathcal{X})$.
\end{proof}

\begin{lemma}\label{lem:ker}
For any $m\geq 1$, the equation $y^{m}x = xy^{m}-my^{m+1}$ holds in $\mathcal{J}$.
\end{lemma}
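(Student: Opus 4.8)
The plan is to prove the identity by induction on $m$, using only the single defining relation of the Jordan plane. By Lemma~\ref{lem:simplify} we have $\Ixq = \Id(\mathcal{X})$ with $\mathcal{X} = \{xy - yx - y^{2}\}$, so the relation $xy = yx + y^{2}$, equivalently
$$
yx = xy - y^{2},
$$
holds in $\mathcal{J}$. This is precisely the asserted formula $y^{m}x = xy^{m} - my^{m+1}$ when $m = 1$, which disposes of the base case.

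For the inductive step I would assume $y^{m}x = xy^{m} - my^{m+1}$ for some $m \geq 1$, multiply this equation on the left by $y$, and then rewrite the term $y^{m+1}x$ appearing on the right using $yx = xy - y^{2}$:
$$
y^{m+1}x = y(y^{m}x) = y\big(xy^{m} - my^{m+1}\big) = (yx)y^{m} - my^{m+2} = (xy - y^{2})y^{m} - my^{m+2} = xy^{m+1} - (m+1)y^{m+2}.
$$
This is the claimed formula with $m$ replaced by $m+1$, so the induction closes.

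I do not anticipate any genuine obstacle: each step is a one-line rewriting that invokes the Jordan-plane relation exactly once, and the only point requiring care is the exponent bookkeeping — in particular noting that expanding $yx = xy - y^{2}$ is exactly what upgrades the coefficient $-m$ to $-(m+1)$. As an alternative one may observe that $\delta\colon a \mapsto xa - ax$ is a derivation of $\mathcal{J}$ with $\delta(y) = y^{2}$, so that the Leibniz rule yields $\delta(y^{m}) = my^{m+1}$, i.e.\ $xy^{m} - y^{m}x = my^{m+1}$; but the direct induction above is the most transparent route and is the one I would write out.
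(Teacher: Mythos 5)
Your proof is correct and follows essentially the same route as the paper: induction on $m$, with the base case being the defining relation $xy-yx=y^{2}$ and the inductive step consisting of one application of that relation (the paper writes $xy^{m+1}-y^{m+1}x=(xy-yx)y^{m}+y(xy^{m}-y^{m}x)$, which is the same computation as your left-multiplication by $y$ followed by rewriting $yx=xy-y^{2}$). No issues.
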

\begin{proof}
We prove the statement by induction on $m$. $xy-yx=y^{2}$ holds in $\mathcal{J}$, so the case where $m=1$ is obvious. Assume that the equation holds for $m \geq1 $. We consider the case where $m+1$:
 \begin{eqnarray*}
 xy^{m+1}-y^{m+1}x &=& xy^{m+1}-yx y^{m}+yxy^{m}-y^{m+1}x\\
&=& (xy-yx)y^{m}+y(x y^{m}-y^{m}x)\\
&=& y^{2} y^{m} +y(my^{m+1}) = (m+1)y^{m+2}.
\end{eqnarray*}
This completes the induction.
\end{proof}

\begin{coro}\label{thm:sim}
Let $M$ be a $\bfk[x]$-module and $p \in {\rm End}_{\bfk}(M)$. Then, $(M,p)$ is a $(\bfk[x],P)$-module if and only if
\begin{equation}\label{equ:main}
xp-px=p^{2}.
\end{equation}
\end{coro}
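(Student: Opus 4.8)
The plan is to read the claim off from Proposition~\ref{prop:modulerel} and Lemma~\ref{lem:simplify}. Recall that Lemma~\ref{lem:simplify} identifies $\Ixq$ with the two-sided ideal $\Id(\mathcal{X})$ generated by the single element $xy-yx-y^{2}$, and that $\mathcal{J}=\bfk\langle x,y\rangle/\Ixq$. So the first move is: given a $\bfk[x]$-module $M$ and an arbitrary $p\in\End_\bfk(M)$, let $x$ act via the given module structure and let $y$ act as $p$; since $\bfk\langle x,y\rangle$ is free, this makes $M$ a $\bfk\langle x,y\rangle$-module with no compatibility between $x$ and $y$ presupposed. The annihilator $\mathrm{ann}\,M$ is a two-sided ideal, so $\Ixq=\Id(xy-yx-y^{2})\subseteq\mathrm{ann}\,M$ if and only if the single element $xy-yx-y^{2}$ lies in $\mathrm{ann}\,M$, i.e. $(xp-px-p^{2})(v)=0$ for every $v\in M$. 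Thus $M$ is a $\mathcal{J}$-module exactly when Eq.~(\ref{equ:main}) holds.

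Granting this, both directions are immediate. If $xp-px=p^{2}$, then $M$ is a $\mathcal{J}$-module by the previous paragraph, and the first half of Proposition~\ref{prop:modulerel} says that $M$ together with the map $v\mapsto yv=p(v)$ is a $(\bfk[x],P)$-module; since that map is precisely $p$, we conclude $(M,p)$ is a $(\bfk[x],P)$-module. Conversely, if $(M,p)$ is a $(\bfk[x],P)$-module, then, letting $y$ act as $p$, the second half of Proposition~\ref{prop:modulerel} makes $M$ a $\mathcal{J}$-module, so every element of $\Ixq$ annihilates $M$; in particular $(xy-yx-y^{2})v=0$ for all $v$, which is exactly $xp-px=p^{2}$. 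The ``only if'' direction can also be seen on the spot by specialising Eq.~(\ref{eq:meq}) to $f=1$, where $P(1)=x$: it reads $xp(v)=p(xv)+p(p(v))$.

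There is no serious obstacle here: the substantive work --- the induction showing that the relations coming from all powers $x^{m}$ follow from the single Jordan-plane relation --- has already been done in Lemma~\ref{lem:simplify}. The only points needing a word of care are the two bookkeeping facts used above, namely that a $\bfk[x]$-module equipped with an arbitrary $\bfk$-linear endomorphism automatically carries a $\bfk\langle x,y\rangle$-module structure, and that a two-sided ideal annihilates $M$ as soon as one (hence any) generating set does. If one prefers to avoid the language of $\bfk\langle x,y\rangle$-modules altogether, the ``if'' direction can instead be proved by an induction on $m$ inside $\End_\bfk(M)$ that mirrors the computation in Lemma~\ref{lem:simplify}, establishing $x^{m+1}p-px^{m+1}=(m+1)\,p\,x^{m}\,p$ and hence, after dividing by $m+1$, that Eq.~(\ref{eq:meq}) holds for every $f=x^{m}$ and so, by $\bfk$-linearity, for every $f\in\bfk[x]$.
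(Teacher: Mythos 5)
Your argument is correct and is exactly the route the paper intends: the corollary is stated without proof precisely because it is the combination of Proposition~\ref{prop:modulerel} (the equivalence of $(\bfk[x],P)$-modules and $\mathcal{J}$-modules) with Lemma~\ref{lem:simplify} (reduction of $\Ixq$ to the single generator $xy-yx-y^{2}$), which is what you spell out, including the correct bookkeeping that an arbitrary pair $(\tau,p)$ gives a $\bfk\langle x,y\rangle$-module and that a two-sided ideal annihilates $M$ once a generating set does. Your two side remarks (the ``only if'' via $f=1$, $P(1)=x$ in Eq.~(\ref{eq:meq}), and the alternative induction giving $x^{m+1}p-px^{m+1}=(m+1)px^{m}p$) are also sound.
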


\begin{coro}\label{thm:nil}
Let $(M,p)$ be a $(\bfk[x],P)$-module. Then, $p:M \longrightarrow M$ is nilpotent.
\end{coro}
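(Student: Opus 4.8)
The plan is to work entirely with the single identity $xp-px=p^{2}$ provided by Corollary~\ref{thm:sim}. Write $X\in\End_{\bfk}(M)$ for the operator by which $x$ acts, so that $Xp-pX=p^{2}$ on the finite-dimensional $\bfk$-space $M$, and more generally $Xp^{m}-p^{m}X=mp^{m+1}$ for every $m\ge 1$ (this is Lemma~\ref{lem:ker} read off in $\End_{\bfk}(M)$). The strategy is to split off the part of $M$ on which $p$ is already nilpotent and show the complementary quotient must vanish.

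First I would check that each subspace $\ker(p^{k})$ is $X$-stable: if $p^{k}v=0$, then $p^{k}(Xv)=Xp^{k}v-kp^{k+1}v=0$, since $p^{k}v=0$ and $p^{k+1}v=p(p^{k}v)=0$. As $M$ is finite dimensional the increasing chain $\ker p\subseteq\ker p^{2}\subseteq\cdots$ stabilizes, so $M_{0}:=\ker(p^{\dim_{\bfk}M})$ is a subspace stable under both $p$ and $X$; hence $p$ and $X$ descend to the quotient $M':=M/M_{0}$, where they still satisfy $Xp-pX=p^{2}$, and by construction the operator induced by $p$ on $M'$ is injective, hence bijective because $\dim_{\bfk}M'<\infty$.

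It then suffices to prove $M'=0$. On $M'$, multiply the relation $Xp-pX=p^{2}$ by $p^{-1}$ on the left and on the right; since $p^{-1}p^{2}p^{-1}=\Id_{M'}$ this gives $p^{-1}X-Xp^{-1}=\Id_{M'}$. Taking traces, the left-hand side has trace $0$ (it is a commutator), so $0=\tr(\Id_{M'})=\dim_{\bfk}M'$ in $\bfk$, and since $\rchar\,\bfk=0$ the integer $\dim_{\bfk}M'$ must be $0$, i.e.\ $M'=0$. Therefore $M=M_{0}$, which says precisely that $p$ is nilpotent.

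The only place where anything could go wrong is the characteristic hypothesis: the impossibility of a relation $AB-BA=\Id$ on a nonzero finite-dimensional space is exactly the input of $\rchar\,\bfk=0$, and it genuinely fails in positive characteristic. An alternative, perhaps more self-contained, route avoids passing to the quotient: take traces directly in $Xp^{m}-p^{m}X=mp^{m+1}$ to get $m\,\tr(p^{m+1})=0$, hence $\tr(p^{k})=0$ for all $k\ge 2$; then base-change to $\overline{\bfk}$ and observe that the nonzero eigenvalues of $p$, if any, would satisfy a Vandermonde system forcing them to occur with multiplicity zero, a contradiction. Either way the computations are routine once Corollary~\ref{thm:sim} and Lemma~\ref{lem:ker} are in hand; I would present the first argument as the main proof.
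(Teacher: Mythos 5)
Your proposal is correct, and in fact the paper offers no written proof of this corollary at all: it is stated as an immediate consequence of Corollary~\ref{thm:sim} and Lemma~\ref{lem:ker}, and the derivation the authors evidently have in mind is exactly your ``alternative'' route, namely taking traces in $\tau p^{m}-p^{m}\tau=mp^{m+1}$ to get $\tr(p^{k})=0$ for all $k\geq 2$ and then concluding nilpotency in characteristic zero. Your main argument is a genuinely different and equally valid route: you split off $M_{0}=\ker(p^{\dim_{\bfk}M})$, check it is stable under the action of $x$ using $\tau p^{k}-p^{k}\tau=kp^{k+1}$, pass to the quotient where $p$ becomes invertible, and there convert $\tau p-p\tau=p^{2}$ into $p^{-1}\tau-\tau p^{-1}=\Id$, which is impossible on a nonzero finite-dimensional space in characteristic zero by the trace of a commutator. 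This version has the merit of isolating exactly where characteristic zero enters (the nonexistence of finite-dimensional realizations of the Weyl relation), and it works over any field of characteristic zero without base change. One detail worth keeping visible in either write-up: the trace identities obtained from Lemma~\ref{lem:ker} only give $\tr(p^{k})=0$ for $k\geq 2$, not $k\geq 1$, so the usual ``all power traces vanish'' criterion does not apply verbatim; your Vandermonde argument (factoring out $\mu_{j}^{2}$ from the system in the distinct nonzero eigenvalues $\mu_{j}$) correctly closes that gap, and your quotient argument avoids the issue entirely.
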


\begin{coro}\label{coro:onedim}
If $(M,p)$ is a $1$-dimensional $(\bfk[x],P)$-module, then $p=0$.
\end{coro}

Now, Lemma \ref{lem:simplify} shows that studying $(\bfk[x],P)$-modules is equivalent to studying modules of the Jordan plane $\mathcal{A}_{y_{2}}$ in the usual sense. Thus, descriptions of the irreducible $\mathcal{A}_{y^{2}}$-module can be interpreted in terms of the $(\bfk[x],P)$-module.

\begin{defn}
A nonzero $(\bfk[x],P)$-module $(M,p)$ is called irreducible if the submodule of $(M,p)$ is either $(0,p)$ or $(M,p)$. $(M,p)$ is called indecomposable if $M \neq 0$ and $(M,p)$ is not the direct sum of its two proper submodules.
\end{defn}

As a special case of $\mathcal{A}_{h}$, the following result can be obtained from the result for irreducible $\mathcal{A}_{y^{2}}$-modules over a field of characteristic zero (see Theorem 4.7 and Corollary 6.1 in \cite{BLO2}).

\begin{theorem}{\rm (\cite{BLO2})}
A $(\bfk[x],P)$-module $(M,p)$ is irreducible if and only if $M$ is an irreducible $\bfk[x]$-module and $p=0$.
\end{theorem}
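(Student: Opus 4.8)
The plan is to bypass the classification results of \cite{BLO2} and argue directly from Corollaries~\ref{thm:sim} and~\ref{thm:nil}, which already encode everything that is needed.

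For the ``if'' direction, suppose $M$ is an irreducible $\bfk[x]$-module and $p=0$. Then $(M,0)$ is indeed a $(\bfk[x],P)$-module, since the defining identity $xp-px=p^{2}$ of Corollary~\ref{thm:sim} becomes the tautology $0=0$. A $(\bfk[x],P)$-submodule of $(M,0)$ is a $\bfk[x]$-submodule $N$ with $p(N)\subseteq N$, and the second condition is vacuous when $p=0$; hence the $(\bfk[x],P)$-submodules of $(M,0)$ are exactly the $\bfk[x]$-submodules of $M$, namely $0$ and $M$. So $(M,0)$ is irreducible.

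For the ``only if'' direction, suppose $(M,p)$ is irreducible. By Corollary~\ref{thm:nil} the operator $p$ is nilpotent, and since $M\neq 0$ this forces $\ker p\neq 0$. I would then show that $\ker p$ is a $(\bfk[x],P)$-submodule: it is automatically $p$-stable, and for any $v\in\ker p$ the relation $px=xp-p^{2}$ coming from Eq.(\ref{equ:main}) gives $p(xv)=x\,p(v)-p^{2}(v)=0$, so $xv\in\ker p$; closure under the full $\bfk[x]$-action then follows by $\bfk$-linearity and iteration. Thus $\ker p$ is a nonzero $(\bfk[x],P)$-submodule, and irreducibility forces $\ker p=M$, i.e.\ $p=0$. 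With $p=0$ in hand, the argument of the first direction shows that the $(\bfk[x],P)$-submodules of $(M,0)$ coincide with the $\bfk[x]$-submodules of $M$, so $M$ is an irreducible $\bfk[x]$-module.

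There is no real obstacle in this argument; the only step requiring a moment's care is checking that $\ker p$ is stable under multiplication by $x$ (hence by all of $\bfk[x]$), which is immediate from Eq.(\ref{equ:main}). Alternatively, one may simply transport Theorem~4.7 and Corollary~6.1 of \cite{BLO2} across the equivalences furnished by Proposition~\ref{prop:modulerel} and Lemma~\ref{lem:simplify}, but the self-contained proof above seems preferable and makes the role of the nilpotency of $p$ transparent.
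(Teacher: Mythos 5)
Your argument is correct, but it is a genuinely different route from the paper's: the paper offers no internal proof at all, obtaining the theorem by specializing the classification of irreducible $\mathcal{A}_{h}$-modules in \cite{BLO2} (Theorem 4.7 and Corollary 6.1) and transporting it through the identification of $(\bfk[x],P)$-modules with Jordan-plane modules given by Proposition \ref{prop:modulerel} and Lemma \ref{lem:simplify}. You instead argue directly: the ``if'' direction is the observation that for $p=0$ the Rota--Baxter submodules are exactly the $\bfk[x]$-submodules, and the ``only if'' direction rests on the key point that $\ker p$ is a nonzero $(\bfk[x],P)$-submodule --- nonzero because $p$ is nilpotent (Corollary \ref{thm:nil}) and $M\neq 0$, and $x$-stable because $p(xv)=xp(v)-p^{2}(v)=0$ for $v\in\ker p$ by Eq.~(\ref{equ:main}) --- so irreducibility forces $\ker p=M$, i.e.\ $p=0$, after which the first direction finishes the proof. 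This checks out; the only caveat on ``self-contained'' is that Corollary \ref{thm:nil} is itself stated in the paper without proof, so your argument is elementary modulo that fact (which does follow, e.g., from the trace of the identity $\tau p^{s}-p^{s}\tau=sp^{s+1}$). What the two approaches buy: the paper's citation route inherits the stronger \cite{BLO2} classification (valid for all $\mathcal{A}_{h}$ and arbitrary fields) at no local cost, while your proof is short, stays entirely inside the $(\bfk[x],P)$ framework of Section \ref{sec:general}, and makes transparent exactly where nilpotency of $p$ and the operator identity $xp-px=p^{2}$ are used.
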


A semisimple $(\bfk[x],P)$-module is a direct sum of irreducible $(\bfk[x],P)$-modules, so we have the following corollary.
\begin{coro}
Let $(M,p)$ be a $(\bfk[x],P)$-module. Then, $(M,p)$ is semisimple if and only if $M$ is a semisimple $\bfk[x]$-module and $p=0$.
\end{coro}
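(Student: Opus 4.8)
The plan is to deduce both implications directly from the definition of a semisimple $(\bfk[x],P)$-module as a finite direct sum of irreducible ones, combined with the theorem stated above that identifies the irreducible $(\bfk[x],P)$-modules as exactly the pairs $(N,0)$ with $N$ an irreducible $\bfk[x]$-module.

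For the forward implication, assume $(M,p)$ is semisimple, so $(M,p)=\bigoplus_{i=1}^{n}(M_i,p_i)$ with each $(M_i,p_i)$ irreducible; concretely this gives a $\bfk[x]$-module decomposition $M=\bigoplus_{i=1}^{n}M_i$ together with the compatible operator $p=\sum_{i=1}^{n}p_i$, in the sense of the direct-sum construction recalled after the definition of an $(R,P)$-module. By the theorem each $M_i$ is an irreducible $\bfk[x]$-module, so $M$ is a direct sum of irreducible $\bfk[x]$-modules and hence a semisimple $\bfk[x]$-module; and again by the theorem each $p_i=0$, so $p=\sum_{i=1}^{n}p_i=0$.

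For the converse, assume $M$ is a semisimple $\bfk[x]$-module and $p=0$. Write $M=\bigoplus_{i=1}^{n}M_i$ with each $M_i$ an irreducible $\bfk[x]$-module. By Corollary \ref{thm:sim} the pair $(M_i,0)$ is a $(\bfk[x],P)$-module, since the defining identity $xp-px=p^{2}$ reduces to $0=0$, and by the theorem it is an irreducible $(\bfk[x],P)$-module. Since $p=0$, each $M_i$ is trivially a $p$-invariant submodule and $p$ restricts to $0$ on it, so the $\bfk[x]$-module decomposition of $M$ is simultaneously a decomposition $(M,p)=(M,0)=\bigoplus_{i=1}^{n}(M_i,0)$ of $(\bfk[x],P)$-modules into irreducibles. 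Hence $(M,p)$ is semisimple.

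There is no real obstacle here: all of the substance is carried by the cited theorem on irreducible modules, and the only point that needs a moment's attention is that a $\bfk[x]$-module direct-sum decomposition automatically upgrades to a $(\bfk[x],P)$-module decomposition once $p=0$, which is immediate.
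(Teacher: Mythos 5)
Your proof is correct and follows essentially the same route as the paper, which treats the corollary as an immediate consequence of the preceding theorem together with the definition of a semisimple $(\bfk[x],P)$-module as a direct sum of irreducibles. Your only addition is to spell out the (trivial) verification that when $p=0$ a $\bfk[x]$-module decomposition into irreducibles is automatically a decomposition of $(\bfk[x],P)$-modules, which the paper leaves implicit.
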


\begin{remark}
Let $M$ be an indecomposable $\bfk[x]$-module. If there is some $p \in \E(M)$ such that $(M,p)$ is a $(\bfk[x],P)$-module, then $(M,p)$ is indecomposable. We find all such $p$ for a given indecomposable $\bfk[x]$-module in Section \ref{sec:classify}. In addition, it is natural to ask whether all the indecomposable $(\bfk[x],P)$-modules are derived from indecomposable $\bfk[x]$-modules with some suitable $p$ and the answer is no. For example, let $M=\bfk v_{1} \oplus \bfk v_{2}$ with
$$
x(v_{1},v_{2}) = (v_{1},v_{2})\left(\begin{array}{cc}a&0\\0&a \end{array}\right), \quad p(v_{1},v_{2}) = (v_{1},v_{2})\left(\begin{array}{cc}0&1\\0&0 \end{array}\right).
$$
The only $1$-dimensional $(\bfk[x],P)$-module is $(\bfk,0)$, so $(M,p)$ is an indecomposable $(\bfk[x],P)$-module whereas $M$ is a decomposable $\bfk[x]$-module.
\end{remark}

\subsection{Direct decomposition of $(M,p)$} \label{decom}
In this subsection, we generalize the results concerning the direct decomposability of $(\bfk[x],P)$-modules in \cite{Iy} from algebraically closed fields of characteristic zero to fields of characteristic zero.

It should be noted that our method is slightly different from that employed by Iyudu.
Any $\mathcal{A}_{y^{2}}$-module can be regarded as both a $\bfk[x]$-module and a $\bfk[y]$-module, but the role of action $x$ is different from the role of action $y$. For example, Corollary \ref{thm:sim} states that the action $y$ should be nilpotent. Thus, there are two natural ways to study $\mathcal{A}_{y^{2}}$-modules: trying to recover the $\mathcal{A}_{y^{2}}$-module structures on $\bfk[x]$-modules or suitable $\bfk[y]$-modules. Iyudu's method aims to determine the $\mathcal{A}_{y^{2}}$-module structures on a given $\bfk[y]$-module with the nilpotent action $y$. However, our starting point is the representation of $(\bfk[x],P)$, so it is natural for us to determine the $(\bfk[x],P)$-module structures on a given $\bfk[x]$-module.

For a $\bfk[x]$-module $M$, let $\tau(v)=xv, \forall v \in M$, and thus $\tau \in {\rm End}_{\bfk}(M)$. It is well known that a $\bfk[x]$-module $M$ can be regarded as a $\bfk$-vector space $M$ with a $\bfk$-linear map $\tau \in {\rm End}_{\bfk}(M)$ and $f(x)v=f(\tau)v$, $f(x) \in \bfk[x]$. In the following, the linear map induced by the action of $x$ is always denoted by $\tau$. Therefore, a $(\bfk[x],P)$-module $(M,p)$ can be regarded as a $\bfk$-vector space $M$ with two $\bfk$-linear maps, $\tau$ and $p$.

Let $(M,p)$ be a $(\bfk[x],P)$-module and the minimal polynomial of $\tau$
$$
f(x) = h_{1}^{r_{1}}(x)\cdots h_{s}^{r_{s}}(x), \quad r_{1}, \cdots ,r_{s} \geq 1,
$$
where $h_{1}(x),\cdots, h_{s}(x)$ are distinct irreducible polynomials in $\bfk[x]$. Define
$$
M_{h_{i}^{r_{i}}(x)} = \{v \in M~|~h_{i}^{t}(\tau)v=0, \mbox{for some}~t>0\}, \quad 1 \leq i \leq s.
$$
Then, $M_{h_{i}^{r_{i}}(x)}$ is invariant under $\tau$ and
\begin{equation}\label{equ:generalized}
M = M_{h_{1}^{r_{1}}(x)} \oplus \cdots \oplus M_{h_{s}^{r_{s}}(x)}.
\end{equation}

\begin{lemma}\label{lem:lie}
Let $M$ be a $\bfk[x]$-module and $p \in \E(M)$. If $(\ad \tau)^{\ell}p=0, \ell \geq 1$, then $M_{h_{i}^{r_{i}}(x)}$ is invariant under $p$.
\end{lemma}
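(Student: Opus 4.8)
The plan is to exploit the decomposition \eqref{equ:generalized} together with the hypothesis $(\ad\tau)^\ell p = 0$. Fix an index $i$ and take $v \in M_{h_i^{r_i}(x)}$, so $h_i^t(\tau)v = 0$ for some $t > 0$; I must show $p(v) \in M_{h_i^{r_i}(x)}$, i.e. that $h_i^{N}(\tau)p(v) = 0$ for some $N$. The key computation is to commute the polynomial $h_i^t(\tau)$ past $p$. Writing $q = h_i^t(\tau)$, the condition $(\ad\tau)^\ell p = 0$ says that the operator $\ad\tau$ is nilpotent on $p$, and since $\ad\tau$ is a derivation of $\E(M)$, for any polynomial $g$ one has a finite Leibniz-type expansion $g(\tau)\,p = \sum_{j\ge 0}\frac{1}{j!}\,p^{(j)}\,g^{(j)}(\tau)$, where $p^{(j)} := (\ad\tau)^j p$ and the sum terminates at $j = \ell-1$ because $(\ad\tau)^\ell p = 0$.

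First I would record this commutation identity precisely: for $g(x)\in\bfk[x]$,
$$
g(\tau)\,p \;=\; \sum_{j=0}^{\ell-1}\frac{1}{j!}\,\big((\ad\tau)^j p\big)\,g^{(j)}(\tau),
$$
which is the standard fact that $e^{\ad\tau}$ (truncated) conjugates $g(\tau)$; it is proved by induction on $\deg g$ starting from $\tau p = p\tau + (\ad\tau)p$. Next I would apply this with $g = h_i^{t}$ for $t$ large. Each term on the right carries a factor $h_i^{(j)}(\tau)^{?}$—more carefully, $(h_i^t)^{(j)}$ is divisible by $h_i^{\,t-j}$ for $j \le t$. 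So choosing $t$ large enough (say $t \ge \ell - 1 + s_0$ where $h_i^{s_0}(\tau)v = 0$), every summand $\big((\ad\tau)^j p\big)\,(h_i^t)^{(j)}(\tau)$ applied to $v$ vanishes, because $(h_i^t)^{(j)}(\tau)$ is a polynomial multiple of $h_i^{\,t-j}(\tau) = h_i^{\,(t-j)-s_0}(\tau)\,h_i^{s_0}(\tau)$, and $h_i^{s_0}(\tau)v = 0$ with $t - j \ge s_0$. Hence $h_i^t(\tau)\,p(v) = 0$, so $p(v)\in M_{h_i^{r_i}(x)}$.

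The routine points are the Leibniz expansion and the divisibility $h_i^{\,t-j}\mid (h_i^t)^{(j)}$, the latter being immediate from the product rule (each differentiation lowers the exponent by at most one). The one place requiring a little care is bookkeeping the two cutoffs simultaneously: the sum over $j$ runs only up to $\ell-1$ (from the nilpotency of $\ad\tau$ on $p$), while the vanishing of each term needs $t - j$ to exceed the annihilating exponent of $v$; taking $t$ larger than the sum of these two bounds handles every term at once. I do not expect a genuine obstacle here—the hypothesis $(\ad\tau)^\ell p = 0$ is exactly what makes the commutator expansion finite, and that finiteness is the whole point. One should note in passing that in our setting Corollary \ref{thm:sim} gives $xp - px = p^2$, i.e. $(\ad\tau)p = p^2$, so $(\ad\tau)^j p$ is a polynomial in $p$ with no constant term and the hypothesis $(\ad\tau)^\ell p = 0$ will later be seen to hold automatically by nilpotence of $p$ (Corollary \ref{thm:nil}); but for this lemma we only use the stated hypothesis abstractly.
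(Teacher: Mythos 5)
Your proof is correct, but it follows a genuinely different route from the paper's. The paper argues by induction on $\ell$: from $(\ad \tau)^{\ell+1}p=0$ it gets $(\ad \tau)^{\ell}[\tau,p]=0$, uses the induction hypothesis to conclude that $M_{h_{i}^{r_{i}}(x)}$ is invariant under $[\tau,p]$, expands the commutator $[h_{i}^{t}(\tau),p]$ into sums of products of polynomials in $\tau$ with $[\tau,p]$ (all of which preserve $M_{h_{i}^{r_{i}}(x)}$), concludes that $h_{i}^{t}(\tau)p v$ lies in $M_{h_{i}^{r_{i}}(x)}$, and then annihilates it with a further power $h_{i}^{t'}(\tau)$. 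You instead unroll that induction into the closed-form commutation identity $g(\tau)p=\sum_{j=0}^{\ell-1}\frac{1}{j!}\bigl((\ad \tau)^{j}p\bigr)g^{(j)}(\tau)$ and couple it with the divisibility $h_{i}^{\,t-j}\mid (h_{i}^{t})^{(j)}$, so that for $t\geq s_{0}+\ell-1$ every summand already kills $v$ and $h_{i}^{t}(\tau)p(v)=0$ in a single step; the ordering in your identity, with the polynomial factors on the right acting on $v$ first, is exactly what makes this work, and your handling of the two cutoffs (the sum stopping at $j=\ell-1$, the exponent $t-j$ staying at least $s_{0}$) is sound. What your version buys is an explicit annihilating exponent and no induction on $\ell$, nor the two-stage ``land in the subspace, then annihilate'' step; what the paper's version buys is that it never divides by $j!$, so its induction is insensitive to the characteristic (immaterial here, since $\bfk$ has characteristic zero throughout, which also legitimizes your factorials). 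Your closing observation that in the intended application $(\ad \tau)^{j}p$ is a power of $p$ up to a factorial, so the hypothesis holds by nilpotency of $p$, is precisely how the paper deploys the lemma in Lemma \ref{lem:inva}.
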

\begin{proof}
We prove this statement by induction on $\ell$.
When $\ell=1$, we find that $\tau$ commutes with $p$. For any $v \in M_{h_{i}^{r_{i}}(x)}$, we have some $t>0$ such that $h_{i}^{t}(\tau)v=0$. $p$ commutes with $h_{i}(\tau)^{t}$, so we obtain
$$
h_{i}^{t}(\tau)pv = p h_{i}^{t}(\tau)v=0.
$$
Hence, $p(v) \in M_{h_{i}^{r_{i}}(x)}$, as required.

We assume that this statement is true for $\ell \geq 1$ and we consider the case where $\ell+1$. Now, $(\ad \tau)^{\ell+1}p=0$ implies that $(\ad \tau)^{\ell}[\tau,p]=0$. By the induction hypothesis, we find that $M_{h_{i}^{r_{i}}(x)}$ is invariant under $[\tau,p]$. For any $v \in M_{h_{i}^{r_{i}}(x)}$, there is some $t>0$ such that $h_{i}^{t}(\tau)v=0$. Using $h_{i}^{t}(\tau)p$ acting on $v$, we obtain
\begin{eqnarray*}
h_{i}^{t}(\tau)p v &=& p h_{i}^{t}(\tau) v + [h_{i}^{t}(\tau),p] v \\
&=& \sum_{j=0}^{t-1} h_{i}^{t-j-1}(\tau)[\tau,p]h_{i}^{j}(\tau) v .
 \end{eqnarray*}
Since $M_{h_{i}^{r_{i}}(x)}$ is invariant under $h_{i}^{t-j-1}(\tau)$, $[\tau,p]$, and $h_{i}^{j}(\tau)$, then we have $h_{i}^{t}(\tau)p v \in M_{h_{i}^{r_{i}}(x)}$. Some $t'$ exists such that
$$
0=h_{i}^{t'}(\tau) (h_{i}^{t}(\tau) p v) =h_{i}^{t'+t}(\tau) p v.
$$
Therefore, $p(v) \in M_{h_{i}^{r_{i}}(x)}$. This completes the induction.
\end{proof}

\begin{lemma}\label{lem:inva}
Let $(M,p)$ be a $(\bfk[x],P)$-module. Then, $M_{h_{i}^{r_{i}}(x)}$ is invariant under $p$.
\end{lemma}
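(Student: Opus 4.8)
The plan is to reduce the claim to Lemma~\ref{lem:lie} by exhibiting some $\ell \geq 1$ with $(\ad\tau)^{\ell}p = 0$. The bridge is Corollary~\ref{thm:sim}: the Rota--Baxter condition on $(M,p)$ is equivalent to the operator identity $\tau p - p\tau = p^{2}$ on $M$, that is, $(\ad\tau)(p) = p^{2}$ in terms of the adjoint action of $\tau$.

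First I would compute the iterated brackets $(\ad\tau)^{k}(p)$ by induction on $k$, with base case $(\ad\tau)(p) = p^{2}$. For the inductive step one applies the Leibniz rule for $\ad\tau$ together with the base identity to get $[\tau, p^{k+1}] = \sum_{j=0}^{k} p^{j}[\tau,p]p^{k-j} = (k+1)p^{k+2}$, whence $(\ad\tau)^{k+1}(p) = k!\,(k+1)\,p^{k+2} = (k+1)!\,p^{k+2}$. This yields $(\ad\tau)^{k}(p) = k!\,p^{k+1}$ for all $k \geq 1$. Now I would invoke Corollary~\ref{thm:nil}: since $p$ is nilpotent, $p^{N} = 0$ for some $N \geq 1$. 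Setting $\ell = \max\{1, N-1\}$, the identity above gives $(\ad\tau)^{\ell}(p) = \ell!\, p^{\ell+1} = 0$ with $\ell \geq 1$ (for $N = 1$ this is $(\ad\tau)(p) = p^{2} = 0$, and for $N \geq 2$ it is $(\ad\tau)^{N-1}(p) = (N-1)!\, p^{N} = 0$). Lemma~\ref{lem:lie} then applies directly and shows that each $M_{h_{i}^{r_{i}}(x)}$ is invariant under $p$, as claimed.

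There is no genuine obstacle here: the Rota--Baxter identity of Corollary~\ref{thm:sim} forces $\ad\tau$ to act locally nilpotently on $p$, after which Lemma~\ref{lem:lie} does all the work. The only points needing a little care are the bookkeeping in the inductive identity $(\ad\tau)^{k}(p) = k!\,p^{k+1}$ and the degenerate case $p = 0$, both routine and both handled by the choice $\ell = \max\{1, N-1\}$.
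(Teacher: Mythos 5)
Your proof is correct and follows essentially the same route as the paper: establish $(\ad\tau)^{k}(p)=k!\,p^{k+1}$, use the nilpotence of $p$ from Corollary~\ref{thm:nil}, and conclude via Lemma~\ref{lem:lie}. The only cosmetic difference is that you re-derive the identity $\tau p^{k}-p^{k}\tau=k p^{k+1}$ directly from Corollary~\ref{thm:sim} by the Leibniz rule, whereas the paper cites Lemma~\ref{lem:ker} for it; your handling of the degenerate case $p=0$ via $\ell=\max\{1,N-1\}$ matches the paper's separate treatment of $p=0$.
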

\begin{proof}
By Lemma \ref{lem:ker}, we have
$$
\tau p^{s}-p^{s}\tau = sp^{s+1},s\geq1.
$$
Then, $(\ad \tau)^{s} p = s! p^{s+1}$.
By Corollary \ref{thm:nil}, $p$ is nilpotent and we let the index of $p$ be $\ell+1$. If $p=0$, then the statement is obviously true. Now, we assume that $p\neq0$, and thus $\ell \geq 1$. Now, we have $(\ad \tau)^{\ell} p = \ell! p^{\ell+1}=0$. By Lemma \ref{lem:lie}, we find that $M_{h_{i}^{r_{i}}(x)}$ is invariant under $p$.
\end{proof}

\begin{theorem}\label{thm:decom}
Let $(M,p)$ be a $(\bfk[x],P)$-module with
$$
M = M_{h_{1}^{r_{1}}(x)} \oplus \cdots \oplus M_{h_{s}^{r_{s}}(x)}.
$$
Then, $p_{i}=p|_{M_{h_{i}^{r_{i}}(x)}}$ is a linear map on $M_{h_{i}^{r_{i}}(x)}$, $(M_{h_{i}^{r_{i}}(x)},p_{i})$ is a $(\bfk[x],P)$-submodule of $(M,p)$, and
\begin{equation}\label{equ:decom}
(M,p) = \bigoplus_{i=1}^{s} (M_{h_{i}^{r_{i}}(x)},p_{i}).
\end{equation}
\end{theorem}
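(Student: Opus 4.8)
The plan is to deduce Theorem~\ref{thm:decom} from the $p$-invariance established in Lemma~\ref{lem:inva} together with the characterization of $(\bfk[x],P)$-modules in Corollary~\ref{thm:sim}, so that essentially all of the real content is already in place and what remains is bookkeeping with the definitions of submodule and direct sum.

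First, Lemma~\ref{lem:inva} guarantees that each summand $M_{h_{i}^{r_{i}}(x)}$ is invariant under $p$, so $p_{i}:=p|_{M_{h_{i}^{r_{i}}(x)}}$ is a well-defined $\bfk$-linear endomorphism of $M_{h_{i}^{r_{i}}(x)}$. By construction $M_{h_{i}^{r_{i}}(x)}$ is also invariant under $\tau$, hence it is a $\bfk[x]$-submodule of $M$; write $\tau_{i}$ for the induced action of $x$ on it. The identity $xp-px=p^{2}$ of Corollary~\ref{thm:sim}, which holds on all of $M$, restricts to $\tau_{i}p_{i}-p_{i}\tau_{i}=p_{i}^{2}$ on $M_{h_{i}^{r_{i}}(x)}$, so by Corollary~\ref{thm:sim} again $(M_{h_{i}^{r_{i}}(x)},p_{i})$ is itself a $(\bfk[x],P)$-module. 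The inclusion $M_{h_{i}^{r_{i}}(x)}\hookrightarrow M$ is by definition compatible with the $x$-action and with $p$, hence is a $(\bfk[x],P)$-module homomorphism, so $(M_{h_{i}^{r_{i}}(x)},p_{i})$ is a $(\bfk[x],P)$-submodule of $(M,p)$.

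It then remains to identify $(M,p)$ with the direct sum of these submodules. Equation~(\ref{equ:generalized}) already gives $M=\bigoplus_{i=1}^{s}M_{h_{i}^{r_{i}}(x)}$ as $\bfk[x]$-modules, and since $p$ carries each $M_{h_{i}^{r_{i}}(x)}$ into itself, for $v=v_{1}+\cdots+v_{s}$ with $v_{i}\in M_{h_{i}^{r_{i}}(x)}$ we get $p(v)=p_{1}(v_{1})+\cdots+p_{s}(v_{s})$. In other words, under the identification $M\cong\bigoplus_{i}M_{h_{i}^{r_{i}}(x)}$ the operator $p$ corresponds precisely to $\sum_{i}p_{i}$ in the sense of the direct sum of $(\bfk[x],P)$-modules recalled in Subsection~\ref{Jordan}, which is exactly the assertion $(M,p)=\bigoplus_{i=1}^{s}(M_{h_{i}^{r_{i}}(x)},p_{i})$ of (\ref{equ:decom}).

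I do not anticipate a genuine obstacle here, since the one step with real content---the $p$-invariance of the summands $M_{h_{i}^{r_{i}}(x)}$---has been isolated in Lemma~\ref{lem:inva} (via the commutator computation $(\ad\tau)^{s}p=s!\,p^{s+1}$ and the nilpotence of $p$ from Corollary~\ref{thm:nil}). The only point requiring a little care is to verify the restricted data $(\tau_{i},p_{i})$ against the correct criterion, namely Corollary~\ref{thm:sim}, rather than attempting to re-check the defining relation Eq.~(\ref{eq:meq}) for $f\in\bfk[x]$ of arbitrary degree directly.
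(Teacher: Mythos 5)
Your proposal is correct and follows essentially the same route as the paper, whose proof simply cites Lemma~\ref{lem:inva} for the $p$-invariance of each $M_{h_{i}^{r_{i}}(x)}$ and treats the rest as immediate. The extra details you supply (restricting the identity of Corollary~\ref{thm:sim} to each summand and matching $p$ with $\sum_i p_i$ under the decomposition) are exactly the bookkeeping the paper leaves implicit.
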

\begin{proof}
The proof follows immediately from Lemma \ref{lem:inva}.
\end{proof}

Theorem \ref{thm:decom} states that the problem of finding the structure of $(\bfk[x],P)$ module $(M,p)$ is reduced to the problem of finding the structure of its submodule $(M_{h_{i}^{r_{i}}(x)},p_{i})$.

\begin{prop}
Let $(M,p)$ be a $(\bfk[x],P)$-module. If $\tau$ has $n$ distinct eigenvalues, then $p=0$.
\end{prop}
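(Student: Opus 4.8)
The plan is to reduce the statement to the one-dimensional case, which was already settled in Corollary \ref{coro:onedim}, via the direct decomposition of Theorem \ref{thm:decom}. I read the hypothesis as saying that $\tau$ has $n=\dim_{\bfk}M$ pairwise distinct eigenvalues $\lambda_{1},\dots,\lambda_{n}\in\bfk$, i.e. that $\tau$ is diagonalizable over $\bfk$ with simple spectrum. In the notation introduced just before Lemma \ref{lem:lie}, the minimal polynomial of $\tau$ is then $f(x)=(x-\lambda_{1})\cdots(x-\lambda_{n})$, so $s=n$, each irreducible factor is the linear polynomial $h_{i}(x)=x-\lambda_{i}$, and each exponent $r_{i}$ equals $1$. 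Hence $M_{h_{i}^{r_{i}}(x)}=\{v\in M\mid (\tau-\lambda_{i})^{t}v=0\text{ for some }t>0\}$ is just the $\lambda_{i}$-eigenspace $\ker(\tau-\lambda_{i}\id)$, which is one-dimensional.

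With this in hand I would apply Theorem \ref{thm:decom}: it gives $(M,p)=\bigoplus_{i=1}^{n}(M_{h_{i}^{r_{i}}(x)},p_{i})$, where $p_{i}=p|_{M_{h_{i}^{r_{i}}(x)}}$ and each $(M_{h_{i}^{r_{i}}(x)},p_{i})$ is a genuine $(\bfk[x],P)$-submodule. Since $\dim_{\bfk}M_{h_{i}^{r_{i}}(x)}=1$, Corollary \ref{coro:onedim} forces $p_{i}=0$ for every $i$, and therefore $p=\sum_{i=1}^{n}p_{i}=0$.

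I do not expect a genuine obstacle here; the only load-bearing point is the interpretation of ``$n$ distinct eigenvalues'' as forcing $n=\dim_{\bfk}M$ (without this the two-dimensional example in the Remark above, with $\tau$ scalar and $p\neq0$, would contradict the claim). If one preferred to bypass Theorem \ref{thm:decom}, there is a short direct argument from Corollary \ref{thm:sim} and Lemma \ref{lem:ker}: rewriting $\tau p^{s}-p^{s}\tau=sp^{s+1}$ as $\tau p^{s}=p^{s}(\tau+sp)$ and taking $s=m-1$, where $m$ is the nilpotency index of $p$, gives $\tau p^{m-1}=p^{m-1}\tau$, so $p^{m-1}$ lies in the commutant of $\tau$; since $\tau$ has distinct eigenvalues this commutant consists of operators diagonal in an eigenbasis of $\tau$, and a diagonal nilpotent operator is $0$, whence $m-1=0$ and $p=0$. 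Either route yields the result, but the decomposition route is shorter given what has already been proved.
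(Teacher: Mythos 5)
Your main argument is exactly the paper's proof: the hypothesis makes each generalized eigenspace in Theorem \ref{thm:decom} one-dimensional, so $(M,p)$ decomposes into $n$ one-dimensional $(\bfk[x],P)$-modules and Corollary \ref{coro:onedim} gives $p=0$. Your alternative commutant argument via $\tau p^{s}-p^{s}\tau=sp^{s+1}$ is also correct, but the route you actually take coincides with the paper's.
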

\begin{proof}
Now, $(M,p)$ is the direct sum of an $n$ 1-dimensional $(\bfk[x],P)$-module. By Corollary \ref{coro:onedim}, we obtain $p=0$.
\end{proof}

If $\bfk$ is an algebraically closed field of characteristic zero, then Eq. (\ref{equ:generalized}) becomes the generalized eigenspace decomposition of $M$
\begin{equation}\label{equ:aclosed}
M = M_{(x-a_{1})^{r_{1}}} \oplus \cdots M_{(x-a_{s})^{r_{s}}},
\end{equation}
where $a_{1},\cdots, a_{s}$ are all distinct eigenvalues of $\tau$ and $r_{1}+\cdots+r_{s}=n$.

\begin{coro}\label{cor:mblock} {\rm (\cite{Iy})}
Let $\bfk$ be an algebraically closed field of characteristic zero and $(M,p)$ a $(\bfk[x],P)$-module. Then, $p_{i}=p|_{M_{(x-a_{i})^{r_{i}}}}$ is a linear map on $M_{(x-a_{i})^{r_{i}}}$ and
\begin{equation}\label{equ:acdecom}
(M,p) = \bigoplus_{i=1}^{s} (M_{(x-a_{i})^{r_{i}}},p_{i}).
\end{equation}
\end{coro}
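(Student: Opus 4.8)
The plan is to observe that Corollary \ref{cor:mblock} is simply the specialization of Theorem \ref{thm:decom} to an algebraically closed base field, so essentially no new argument is needed beyond a translation of notation. First I would recall that over an algebraically closed field $\bfk$ every irreducible polynomial in $\bfk[x]$ is linear. Hence, writing $\tau$ for the $\bfk$-linear endomorphism of $M$ induced by the action of $x$, its minimal polynomial factors as $f(x) = (x-a_{1})^{r_{1}}\cdots(x-a_{s})^{r_{s}}$ with $a_{1},\dots,a_{s}$ the distinct eigenvalues of $\tau$; that is, $h_{i}(x) = x - a_{i}$ in the notation of Theorem \ref{thm:decom}. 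Consequently the $\tau$-invariant subspace $M_{h_{i}^{r_{i}}(x)} = \{v \in M \mid h_{i}^{t}(\tau)v = 0 \text{ for some } t>0\}$ is exactly the generalized eigenspace $M_{(x-a_{i})^{r_{i}}}$, and the decomposition \eqref{equ:generalized} becomes the generalized eigenspace decomposition \eqref{equ:aclosed}, with $r_{1}+\cdots+r_{s}=n=\dim_{\bfk}M$ since the generalized eigenspaces span $M$.

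Next I would invoke Lemma \ref{lem:inva}, applied with $h_{i}(x)=x-a_{i}$, to conclude that each $M_{(x-a_{i})^{r_{i}}}$ is invariant under $p$; thus $p_{i}:=p|_{M_{(x-a_{i})^{r_{i}}}}$ is a well-defined $\bfk$-linear endomorphism of $M_{(x-a_{i})^{r_{i}}}$. Since $M_{(x-a_{i})^{r_{i}}}$ is a $\bfk[x]$-submodule of $M$ and the restriction $p_{i}$ still satisfies the defining identity $\tau p_{i}-p_{i}\tau = p_{i}^{2}$ by Corollary \ref{thm:sim}, the pair $(M_{(x-a_{i})^{r_{i}}},p_{i})$ is a $(\bfk[x],P)$-submodule of $(M,p)$. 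Finally, because $M$ decomposes as the direct sum of the subspaces $M_{(x-a_{i})^{r_{i}}}$ and $p$ acts block-diagonally with respect to this decomposition, one obtains $(M,p) = \bigoplus_{i=1}^{s} (M_{(x-a_{i})^{r_{i}}},p_{i})$; equivalently, one may simply cite Theorem \ref{thm:decom} directly.

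I do not anticipate a genuine obstacle here. All the substantive content — that $p$ preserves the (generalized) eigenspaces of $\tau$ — was already established in Lemmas \ref{lem:lie} and \ref{lem:inva}, using the relation $(\ad\tau)^{s}p = s!\,p^{s+1}$ together with the nilpotence of $p$ from Corollary \ref{thm:nil}. The only point requiring care is the bookkeeping of passing from the "irreducible factor" language of Theorem \ref{thm:decom} to the "eigenvalue" language of \eqref{equ:aclosed}, which is routine once one notes that algebraic closure forces each $h_{i}$ to be linear.
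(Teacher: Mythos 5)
Your proposal is correct and follows exactly the paper's route: the corollary is the specialization of Theorem \ref{thm:decom} (via Lemma \ref{lem:inva}) to an algebraically closed field, where every irreducible factor $h_{i}(x)$ is linear and Eq.~(\ref{equ:generalized}) becomes the generalized eigenspace decomposition (\ref{equ:aclosed}). No gaps; the bookkeeping you describe is all that is needed.
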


\begin{coro}\label{cor:block}{\rm (\cite{Iy})}
Let $\bfk$ be an algebraically closed field of characteristic zero, $(M,p)$ a $(\bfk[x],P)$-module,and $A, B$ the matrices of $\tau,p$ with respect to the basis $v_{1},\cdots,v_{n}$ respectively. If
$$
A =\left(\begin{array}{cccc}A_{11}&0&\cdots&0 \\
0&A_{22}&\cdots&0\\
\cdots&\cdots&\cdots&\cdots\\
0&0&0&A_{ss} \end{array} \right)
$$ is a quasi-diagonal matrix with $A_{11},\cdots,A_{ss}$ have no common eigenvalues, and thus
$$
B =\left(\begin{array}{cccc}B_{11}&0&\cdots&0 \\
0&B_{22}&\cdots&0\\
\cdots&\cdots&\cdots&\cdots\\
0&0&0&B_{ss} \end{array} \right)
$$ is also a quasi-diagonal matrix with $A_{ii}B_{ii}-B_{ii}A_{ii} = B_{ii}^{2}, i=1,\cdots,s$.
\end{coro}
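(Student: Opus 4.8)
The plan is to deduce this directly from Corollary \ref{cor:mblock} together with the matrix form of the defining relation supplied by Corollary \ref{thm:sim}. First I would reinterpret the hypotheses module-theoretically: let $W_i \subseteq M$ be the span of those basis vectors $v_j$ on which the block $A_{ii}$ acts. Then each $W_i$ is $\tau$-invariant, $M = W_1 \oplus \cdots \oplus W_s$ as $\bfk[x]$-modules, and the characteristic polynomial of $\tau|_{W_i}$ equals that of $A_{ii}$. Since the blocks $A_{11},\ldots,A_{ss}$ have pairwise disjoint spectra, every generalized eigenspace $M_{(x-a)^r}$ occurring in Eq. (\ref{equ:aclosed}) is contained in exactly one $W_i$; hence $W_i$ is precisely the direct sum of those generalized eigenspaces whose eigenvalue is a root of the characteristic polynomial of $A_{ii}$.

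Next I would invoke Corollary \ref{cor:mblock}, which gives $(M,p) = \bigoplus_j (M_{(x-a_j)^{r_j}}, p_j)$. Grouping these summands according to which $W_i$ they lie in, and using that a partial sum of $p$-invariant summands of a $p$-invariant direct sum decomposition is again $p$-invariant, I obtain that each $W_i$ is invariant under $p$ and that $(W_i, p|_{W_i})$ is a $(\bfk[x],P)$-submodule with $(M,p) = \bigoplus_{i=1}^s (W_i, p|_{W_i})$. Because $W_i$ is by construction the coordinate subspace spanned by the basis vectors attached to $A_{ii}$, the $p$-invariance of every $W_i$ is equivalent to the matrix $B$ of $p$ in the basis $v_1,\ldots,v_n$ being block diagonal with the same block sizes as $A$, i.e. $B = \diag(B_{11},\ldots,B_{ss})$, with $B_{ii}$ the matrix of $p|_{W_i}$.

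Finally, by Corollary \ref{thm:sim} the pair $(M,p)$ is a $(\bfk[x],P)$-module exactly when $AB - BA = B^{2}$; restricting this identity to the $i$-th diagonal block (equivalently, applying Corollary \ref{thm:sim} to the submodule $(W_i, p|_{W_i})$) yields $A_{ii}B_{ii} - B_{ii}A_{ii} = B_{ii}^{2}$ for each $i$, which is the assertion. I do not expect a genuine obstacle here: the substance is already contained in Corollary \ref{cor:mblock}, and what remains is bookkeeping that identifies the $\tau$-stable generalized-eigenspace decomposition with the given block decomposition of $A$. The only point needing a line of care is that a single block $A_{ii}$ may carry several distinct eigenvalues, so one must group the corresponding generalized eigenspaces rather than match them one-to-one with the blocks.
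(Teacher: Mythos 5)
Your proposal is correct and follows essentially the route the paper intends: the paper states Corollary \ref{cor:block} without a separate proof precisely because it is the matrix translation of Corollary \ref{cor:mblock} (the $p$-invariance of the generalized eigenspaces from Eq.~(\ref{equ:aclosed})) combined with the relation $AB-BA=B^{2}$ from Corollary \ref{thm:sim}, which is exactly your argument. Your extra care in grouping several generalized eigenspaces into a single block $A_{ii}$ is the right bookkeeping and introduces no gap.
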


\begin{prop}\label{prop:gen}
Let $\bfk$ be an algebraically closed field of characteristic zero and $(M,p)$ a $(\bfk[x],P)$-module. Some $\bfk$-basis $v_{1},\cdots,v_{n}$ of $M$ exists such that the matrices associated with $\tau, p$ are both upper triangular.
\end{prop}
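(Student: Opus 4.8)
The plan is to build, by induction on $n=\dim_{\bfk}M$, a complete flag $0=W_{0}\subset W_{1}\subset\cdots\subset W_{n}=M$ with every $W_{i}$ invariant under both $\tau$ and $p$; any basis adapted to such a flag then makes the matrices of $\tau$ and $p$ simultaneously upper triangular. The inductive step amounts to producing a common eigenvector of $\tau$ and $p$.

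The key point -- and the place where the Rota--Baxter structure enters -- is that $\ker p$ is invariant under $\tau$. By Corollary~\ref{thm:sim} we have $\tau p-p\tau=p^{2}$, whence $p\tau=\tau p-p^{2}$; so for $v\in\ker p$ we get $p(\tau v)=\tau(pv)-p^{2}v=0$, i.e. $\tau v\in\ker p$. By Corollary~\ref{thm:nil}, $p$ is nilpotent, so $\ker p\neq 0$; and since $\bfk$ is algebraically closed, $\tau|_{\ker p}$ has an eigenvector $v_{1}$, which automatically satisfies $pv_{1}=0$. Then $\bfk v_{1}$ is a one-dimensional subspace stable under both $\tau$ and $p$, and $(\bfk v_{1},p|_{\bfk v_{1}})=(\bfk v_{1},0)$ satisfies $xp-px=p^{2}=0$, so it is a $(\bfk[x],P)$-submodule of $(M,p)$.

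I would then pass to the quotient $(M/\bfk v_{1},\bar p)$. Since $\bfk v_{1}$ is a $(\bfk[x],P)$-submodule, $M/\bfk v_{1}$ inherits a $\bfk[x]$-module structure with induced linear maps $\bar\tau,\bar p$, and the relation $\bar\tau\bar p-\bar p\bar\tau=\bar p^{\,2}$ is inherited from $M$; hence by Corollary~\ref{thm:sim} the pair $(M/\bfk v_{1},\bar p)$ is a $(\bfk[x],P)$-module, of dimension $n-1$. The induction hypothesis (with base case $n\le 1$, trivial or covered by Corollary~\ref{coro:onedim}) furnishes a basis of $M/\bfk v_{1}$ in which $\bar\tau$ and $\bar p$ are upper triangular; lifting its vectors to $v_{2},\dots,v_{n}\in M$ and prepending $v_{1}$ gives a basis of $M$. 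As $\bfk v_{1}$ is $\tau$- and $p$-stable, the matrices of $\tau$ and $p$ in the basis $v_{1},v_{2},\dots,v_{n}$ are then upper triangular, which closes the induction.

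I expect the only real obstacle to be the invariance of $\ker p$ under $\tau$ needed to locate the common eigenvector, and this is an immediate consequence of the identity $xp-px=p^{2}$; the remainder is the classical flag argument. Alternatively, one may observe via Lemma~\ref{lem:ker} that $\tau$ and $p$ generate a finite-dimensional solvable (indeed metabelian) Lie subalgebra of $\mathfrak{gl}(M)$ -- spanned by $\tau$ together with the powers of the nilpotent operator $p$, with derived algebra contained in the abelian span of $p^{2},p^{3},\dots$ -- and then invoke Lie's theorem; one could also first split off the generalized eigenspaces using Corollary~\ref{cor:mblock} and reduce to a single eigenvalue, though that reduction is not necessary.
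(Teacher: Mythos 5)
Your proposal is correct, but your main argument takes a genuinely different route from the paper; in fact the paper's proof is exactly the alternative you mention in your closing sentence. With $\ell$ the nilpotency index of $p$, the paper forms $L=\mathrm{span}\{\tau,p,p^{2},\dots,p^{\ell-1}\}$, uses Lemma \ref{lem:ker} (i.e.\ $[\tau,p^{s}]=sp^{s+1}$) to see that $L$ is a finite-dimensional Lie algebra whose derived algebra $\mathrm{span}\{p^{2},\dots,p^{\ell-1}\}$ is abelian, hence $L$ is solvable, and then invokes Lie's theorem to triangularize $\tau$ and $p$ simultaneously. Your primary argument instead proves the needed instance of Lie's theorem by hand: the relation $\tau p-p\tau=p^{2}$ of Corollary \ref{thm:sim} makes $\ker p$ stable under $\tau$; nilpotency of $p$ (Corollary \ref{thm:nil}) gives $\ker p\neq 0$ when $M\neq 0$; algebraic closedness yields an eigenvector $v_{1}$ of $\tau$ inside $\ker p$, so $\bfk v_{1}$ is stable under both operators; and the quotient $(M/\bfk v_{1},\bar p)$ inherits the relation of Corollary \ref{thm:sim}, so induction produces a complete flag invariant under $\tau$ and $p$, and lifting an adapted basis is routine as you describe. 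The paper's route is shorter, delegating the work to Lie's theorem; yours is more elementary and self-contained, and it yields the extra information that the flag can be started in $\ker p$, so the diagonal of the matrix of $p$ is visibly zero, as nilpotency demands. Both arguments use the hypotheses in the same places: algebraic closedness for the common eigenvector (respectively for Lie's theorem) and characteristic zero through the standing assumptions and Corollary \ref{thm:nil}.
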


\begin{proof}
Let the index of $p$ be $\ell$ and $L=\spa \{\tau,p,\cdots,p^{\ell-1}\}$.
By Lemma \ref{lem:ker}, we have
\begin{equation*}
\tau p^{s}-p^{s}\tau = sp^{s+1}, \quad \forall s \geq 1.
\end{equation*}
Let $[,]$ be the commutator and  we have
$$
[\tau,\tau]=0, \quad [\tau,p^{s}] = sp^{s+1}, \quad [p^{s},p^{t}]=0,1\leq s,t \leq \ell-1.
$$
Then, $L$ is a finite dimensional Lie algebra under $[,]$. The derived algebra is
$$
[L,L] = \spa \{p^{2}, \cdots, p^{\ell-1}\}.
$$
Hence, $L$ is solvable. By Lie's theorem, a basis exists such that $\tau$ and $p$ are both upper triangular.
\end{proof}

\begin{coro}
Let $(M,p)$ be a $(\bfk[x],P)$-module. A basis $v_{1}, \cdots, v_{n}$ of $M$ exists such that $(V_{i},p_{i})$ is a $(\bfk[x],P)$-module, where $V_{i}=\spa\{v_{1},\cdots, v_{i}\}$, $p_{i}=p|_{V_{i}}$, $1 \leq i \leq n$.
\end{coro}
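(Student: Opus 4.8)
The plan is to deduce the corollary directly from Proposition~\ref{prop:gen} and Corollary~\ref{thm:sim}; here, as in Proposition~\ref{prop:gen}, $\bfk$ is taken to be an algebraically closed field of characteristic zero (over a general field of characteristic zero $\tau$ need not be triangularizable, so a complete flag of submodules need not exist).

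First I would reduce the claim that $(V_i,p_i)$ is a $(\bfk[x],P)$-submodule of $(M,p)$ to the purely linear-algebraic condition that the subspace $V_i$ be invariant under both $\tau$ and $p$. Indeed, if a $\bfk$-subspace $N\subseteq M$ satisfies $\tau(N)\subseteq N$, then $N$ is a $\bfk[x]$-submodule of $M$; if in addition $p(N)\subseteq N$, then $p|_N\in\E(N)$ is well defined, and the relation $\tau\,(p|_N)-(p|_N)\,\tau=(p|_N)^2$ demanded by Corollary~\ref{thm:sim} is just the restriction to $N$ of the identity $\tau p-p\tau=p^2$, which already holds on $M$ because $(M,p)$ itself is a $(\bfk[x],P)$-module. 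So $(N,p|_N)$ is automatically a $(\bfk[x],P)$-submodule, and no further verification is needed.

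Next I would invoke Proposition~\ref{prop:gen} to choose a $\bfk$-basis $v_1,\dots,v_n$ of $M$ for which the matrices of $\tau$ and of $p$ are simultaneously upper triangular. For such a basis, every $V_i=\spa\{v_1,\dots,v_i\}$ is carried into itself by any upper-triangular operator on $M$, hence in particular by $\tau$ and by $p$. Combining this with the previous paragraph, $(V_i,p_i)$ with $p_i=p|_{V_i}$ is a $(\bfk[x],P)$-submodule of $(M,p)$ for each $1\le i\le n$, which is precisely the assertion.

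I do not anticipate a real obstacle: the corollary is an immediate consequence of Lie's theorem as already packaged in Proposition~\ref{prop:gen}. The only step worth spelling out is the reduction of the first paragraph --- that once simultaneous $\tau$- and $p$-invariance of $V_i$ is in hand, the Rota--Baxter compatibility Eq.~(\ref{equ:main}) on $V_i$ comes for free by restriction --- so that nothing beyond triangularization is actually required.
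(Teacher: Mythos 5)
Your argument is correct and is exactly the route the paper intends: the corollary is an immediate consequence of Proposition~\ref{prop:gen} (simultaneous upper triangularization of $\tau$ and $p$), combined with the observation that on a subspace invariant under both maps the relation $\tau p-p\tau=p^{2}$ of Corollary~\ref{thm:sim} restricts automatically. Your remark that the algebraically closed hypothesis of Proposition~\ref{prop:gen} is implicitly needed here is also well taken.
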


\begin{remark}
When $\bfk$ is an algebraically closed field of characteristic zero, the problem of determining the structure of the $(\bfk[x],P)$-modules is reduced to the case where $\tau$ has exactly one eigenvalue. Furthermore, by Eq. {\rm(\ref{equ:main})} in Corollary \ref{thm:sim}, it is possible to assume that $\tau$ has exactly one eigenvalue $0$.
\end{remark}

\section{Rota--Baxter module structure on $\mathcal{M}_{a,n}$}\label{sec:classify}
In this section, we mainly focus on the Rota--Baxter module structures on the indecomposable $\bfk[x]$-modules. After introducing some basic properties of Rota--Baxter module structures on $\mathcal{M}_{a,n}$, we classify them up to isomorphism. We conclude this section by discussing the general $(\bfk[x],P)$-modules. In this section,
we assume that $\bfk$ is an algebraically closed field of characteristic zero.

\subsection{Matrices associated with $(M,p)$} Let $M$ be a $\bfk[x]$-module, $p:M\longrightarrow M$ a $\bfk$-linear map, and fix a $\bfk$-basis $v_{1}, v_{2}, \cdots, v_{n}$ of $M$. Let the matrices of $\tau$ and $p$ corresponding to the basis $v_{1},v_{2},\cdots,v_{n}$ be $A$ and $B$, respectively. Then, $(M,p)$ is a $(\bfk[x],P)$-module if and only if
$$
AB-BA=B^{2}.
$$

It is well known that there is a suitable $\bfk$-basis $v_{1},\cdots, v_{n}$ of $M$ such that $\tau$ has canonical Jordan block form and each Jordan block is a matrix of the following form.
$$
J_{r}(a) = \left(\begin{array}{ccccc}
a&1&\cdots&0&0 \\
0&a&\cdots&0&0 \\
\vdots&\vdots&\cdots&\vdots&\vdots \\
0&0&\cdots&a&1 \\
0&0&\cdots&0&a
\end{array} \right).
$$

By Corollary \ref{cor:block}, we only need to consider the case where $\tau$ has exactly one eigenvalue. In general, $\tau$ may have several Jordan blocks. Thus, we consider the simplest case where $\tau$ has exactly one Jordan block. Now, the $(\bfk[x],P)$ module $(M,p)$ is $(\mathcal{M}_{a,n},p)$, where $\mathcal{M}_{a,n} = \bfk[x]/(x-a)^{n}$ with the standard basis $(\overline{x-a)^{n-1}},\cdots, \overline{x-a},\overline{1}$. It is easy to see that
\begin{equation*}
J_{n}(a)B-BJ_{n}(a) = B^{2}
\end{equation*}
is equivalent to
\begin{equation*}
J_{n}(0)B-BJ_{n}(0) = B^{2}.
\end{equation*}
Thus, it is possible to assume that the module is $(\mathcal{M}_{n},p):=(\mathcal{M}_{n,0},p)$. Denote $J_{n}(0)$ by $J_{n}$ when there is no possibility of confusion.

\begin{prop}\label{prop:upper}
Let $(\mathcal{M}_{n},p)$ be a $(\bfk[x],P)$-module. Then, $p(\overline{x^{n-1}})=0$.
\end{prop}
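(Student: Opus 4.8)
The plan is to work with the matrix identity $\tau p-p\tau=p^{2}$ supplied by Corollary \ref{thm:sim} (here $\tau$ is the operator by which $x$ acts on $\mathcal{M}_{n}$, i.e.\ the single nilpotent Jordan block $J_{n}$), together with two facts already available: $\overline{x^{n-1}}$ spans $\ker\tau$ (it is the first vector of the Jordan basis, and $\tau\,\overline{x^{n-1}}=\overline{x^{n}}=0$), and $p$ is nilpotent by Corollary \ref{thm:nil}.

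First I would put $v=\overline{x^{n-1}}$ and $u=p(v)$, and evaluate $\tau p-p\tau=p^{2}$ at $v$: since $\tau v=0$ this collapses to $\tau u=pu$. Then, using the iterated relation $\tau p^{s}-p^{s}\tau=s\,p^{s+1}$ (Lemma \ref{lem:ker}, read inside the module via Proposition \ref{prop:modulerel}, or the computation in the proof of Lemma \ref{lem:inva}), a short induction on $k$ gives
\[
\tau^{k}u=k!\,p^{k}u\qquad(k\ge 0),
\]
since $\tau^{k+1}u=k!\,\tau p^{k}u=k!\,(p^{k}\tau u+k\,p^{k+1}u)=k!\,(p^{k+1}u+k\,p^{k+1}u)=(k+1)!\,p^{k+1}u$, invoking $\tau u=pu$ at the middle step (the base case $k=0$ being trivial).

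Finally I would argue by contradiction. Assume $u\neq 0$. Since $p$ is nilpotent there is a largest $m\ge 0$ with $p^{m}u\neq 0$. From the displayed identity, $\tau^{m+1}u=(m+1)!\,p^{m+1}u=0$ while $\tau^{m}u=m!\,p^{m}u\neq 0$ (here $\mathrm{char}\,\bfk=0$ makes $m!$ invertible), so $p^{m}u$ is a nonzero vector annihilated by $\tau$; as $\ker\tau=\bfk\,v$ is one-dimensional, $p^{m}u=\lambda v$ for some $\lambda\neq 0$. But $p^{m}u=p^{m}p(v)=p^{m+1}(v)$, hence $p^{m+1}(v)=\lambda v$ with $\lambda\neq 0$, exhibiting $v$ as an eigenvector of the nilpotent operator $p^{m+1}$ for a nonzero eigenvalue --- impossible. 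Therefore $u=p(\overline{x^{n-1}})=0$.

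The one genuinely creative step is the opening move: deciding to track $u=p(\overline{x^{n-1}})$ and noticing that the defining relation, applied at a vector of $\ker\tau$, forces $\tau u=pu$; after that the nilpotency of $p$ does all the work and the rest is bookkeeping, with the characteristic-zero hypothesis entering only to keep the factorials invertible. I do not expect a real obstacle beyond being careful that $\mathcal{M}_{n}$ is genuinely the $a=0$ case, so that $\tau$ is a single nilpotent Jordan block and $\ker\tau$ is exactly $\bfk\,\overline{x^{n-1}}$.
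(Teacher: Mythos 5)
Your argument is correct and is essentially the paper's own proof: both hinge on evaluating the relations $\tau p^{s}-p^{s}\tau=sp^{s+1}$ (Lemma \ref{lem:ker}) at $\overline{x^{n-1}}$, using that $\ker\tau=\bfk\,\overline{x^{n-1}}$ is one-dimensional and that $p$ is nilpotent (Corollary \ref{thm:nil}), so that the only possible eigenvalue is $0$. The paper packages this as a descending induction (``$p^{s+1}(\overline{x^{n-1}})=0$ implies $p^{s}(\overline{x^{n-1}})=0$'', starting from the nilpotency index), while you argue by contradiction at the maximal $m$ with $p^{m}u\neq 0$ via the identity $\tau^{k}u=k!\,p^{k}u$ --- a cosmetic difference only.
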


\begin{proof}
We prove the statement by induction on $n$. When $n=1$, $p=0$, as necessary. Assume that the statement holds for $n-1 \geq 0$ and consider the case $n$.

$\tau(\overline{x^{n-1}}) = \overline{x x^{n-1}} =\overline{x^{n}} = 0$, so we find that $\overline{x^{n-1}}$ is an eigenvector of $\tau$ corresponding to $0$. In addition, $\rank(\tau)=n-1$ implies that the dimension of the eigenspace $V$ associated with $0$ is $1$. Thus,$V ={\spa} \{\overline{ x^{n-1} }\}$.

By Lemma \ref{lem:ker}, we have
$$
\tau p^{s}-p^{s}\tau=sp^{s+1}, \forall s \geq 1.
$$

If $p^{s+1}(\overline{x^{n-1}})=0$, then we have
\begin{eqnarray*}
\tau(p^{s}(\overline{x^{n-1}})) = (\tau p^{s})(\overline{x^{n-1}})=(\tau p^{s}-p^{s}\tau)(\overline{x^{n-1}}) = sp^{s+1}(\overline{x^{n-1}})=0.
\end{eqnarray*}
Then, $p^{s}(\overline{x^{n-1}}) \in V$ and $p^{s}(\overline{x^{n-1}})=\mu \overline{x^{n-1}}$ for some $\mu\in \bfk$, i.e., $\overline{x^{n-1}}$ is an eigenvector of $p^{s}$ corresponding to $\mu$. $p$ is nilpotent, so $\mu=0$ and thus $p^{s}(\overline{x^{n-1}})=0$. Hence, $p^{s+1}(\overline{x^{n-1}})=0$ implies that $p^{s}(\overline{x^{n-1}})=0$. By Theorem \ref{thm:nil}, $p$ is nilpotent and some $m$ exists such that $p^{m+1} =0$.
It is obvious that $p^{m+1}(\overline{x^{n-1}})=0$. Therefore, we have $p(\overline{x^{n-1}})=0$.
\end{proof}


\begin{coro}\label{coro:quo}
Let $(\mathcal{M}_{n},p)$ be a $(\bfk[x],P)$-module. Then, $p$ induces a linear map $p'$ on $\mathcal{M}_{n-1} = \mathcal{M}_{n}/(\overline{x^{n-1}})$ given by $p'(v+(\overline{x^{n-1}}))=p(v)+(\overline{x^{n-1}})$ such that $(\mathcal{M}_{n-1},p')$ is also a $(\bfk[x],P)$-module.
\end{coro}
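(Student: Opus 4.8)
The statement is essentially that a quotient of a Rota--Baxter module by a Rota--Baxter submodule is again a Rota--Baxter module, together with an identification of the underlying $\bfk[x]$-module. I would first isolate the submodule $N := (\overline{x^{n-1}})$ of $\mathcal{M}_{n}$. Since $\tau(\overline{x^{n-1}}) = \overline{x^{n}} = \overline{0}$ in $\mathcal{M}_{n}=\bfk[x]/(x^{n})$, the $\bfk[x]$-submodule generated by $\overline{x^{n-1}}$ is just the line $N = \bfk\,\overline{x^{n-1}}$, and the quotient map $\mathcal{M}_{n}\to\mathcal{M}_{n}/N$ identifies $\mathcal{M}_{n}/N$ with $\bfk[x]/(x^{n-1})=\mathcal{M}_{n-1}$ as $\bfk[x]$-modules (it sends $\overline{x^{i}}\mapsto\overline{x^{i}}$ for $0\le i\le n-2$ and $\overline{x^{n-1}}\mapsto 0$). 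So the only real content is that $p$ descends to a well-defined map on this quotient and that the descended map satisfies the Rota--Baxter module axiom.

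\textbf{Well-definedness.} For $p$ to induce $p'$ on $\mathcal{M}_{n}/N$ we need $p(N)\subseteq N$. This is exactly where Proposition \ref{prop:upper} enters: it gives $p(\overline{x^{n-1}})=0\in N$, hence $p(N)=0\subseteq N$. Consequently, if $v+N = w+N$ then $v-w\in N$, so $p(v-w)\in N$ and $p(v)+N = p(w)+N$; thus $p'(v+N):=p(v)+N$ is well defined, and it is $\bfk$-linear because $p$ is. (Note $p|_{N}=0$, so $(N,p|_{N})$ is in fact a $(\bfk[x],P)$-submodule of $(\mathcal{M}_{n},p)$.)

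\textbf{The Rota--Baxter condition.} By Corollary \ref{thm:sim} applied to $(\mathcal{M}_{n},p)$ we have $\tau p - p\tau = p^{2}$ as operators on $\mathcal{M}_{n}$. Both $\tau$ and $p$ preserve $N$ (for $\tau$ this is automatic since $N$ is a $\bfk[x]$-submodule, and for $p$ it is the previous step), so each induces an operator on $\mathcal{M}_{n}/N=\mathcal{M}_{n-1}$; the induced operator of $\tau$ is precisely the action of $x$ on $\mathcal{M}_{n-1}$, which I again call $\tau'$. Passing the identity $\tau p - p\tau = p^{2}$ to the quotient yields $\tau' p' - p'\tau' = (p')^{2}$ on $\mathcal{M}_{n-1}$, and Corollary \ref{thm:sim} (in the other direction) then shows $(\mathcal{M}_{n-1},p')$ is a $(\bfk[x],P)$-module, as desired.

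\textbf{Main obstacle.} There is no substantive obstacle here once Proposition \ref{prop:upper} is available: the containment $p(\overline{x^{n-1}})=0$ is the crux, and everything else (well-definedness of $p'$, identification of the quotient $\bfk[x]$-module, descent of the quadratic relation via Corollary \ref{thm:sim}) is formal. The only point requiring minor care is checking that the quotient of $\mathcal{M}_{n}$ by the line $\bfk\,\overline{x^{n-1}}$ really is $\mathcal{M}_{n-1}$ as a $\bfk[x]$-module, which follows from $\tau(\overline{x^{n-1}})=0$.
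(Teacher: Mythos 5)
Your proposal is correct and follows essentially the same route as the paper: it uses Proposition \ref{prop:upper} to get $p(\overline{x^{n-1}})=0$ so that $p$ descends to the quotient, and then passes the relation $\tau p - p\tau = p^{2}$ (Corollary \ref{thm:sim}) to $\mathcal{M}_{n-1}$. Your write-up is just slightly more explicit than the paper's about well-definedness and the identification of $\mathcal{M}_{n}/(\overline{x^{n-1}})$ with $\mathcal{M}_{n-1}$ as a $\bfk[x]$-module.
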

\begin{proof}
Proposition \ref{prop:upper} implies that $p$ induces a linear map $p'$ on $\mathcal{M}_{n-1}=\mathcal{M}_{n}/(\overline{x^{n-1}})$. For any $v+(\overline{x^{n-1}})\in \mathcal{M}_{n-1}$, we have
\begin{eqnarray*}
(\tau p-p\tau)(v+(\overline{x^{n-1}}))&=& \tau p(v+(\overline{x^{n-1}}))-p\tau(v+(\overline{x^{n-1}})) \\
&=& (\tau p(v)-p\tau(v))+(\overline{x^{n-1}})= p^{2}(v)+(\overline{x^{n-1}})\\
&=&(p')^{2}(v+(\overline{x^{n-1}})).
\end{eqnarray*}
 Therefore, $(\mathcal{M}_{n-1},p')$ is a $(\bfk[x],P)$-module.
\end{proof}

\noindent{\bf Notation.}~~Let $(\mathcal{M}_{n},p)$ be a $(\bfk[x],P)$-module. In the rest of this section, $B=(b_{ij})_{n\times n}\in \Mnk$ always denotes the matrix of $p$ corresponding to the standard basis, unless specified otherwise.

\begin{coro}\label{coro:upper}
Let $(\mathcal{M}_{n},p)$ be a $(\bfk[x],P)$-module. Then, $B$ is strictly upper triangular.
\end{coro}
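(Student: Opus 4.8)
The plan is to induct on $n$ using the quotient construction already established. Base case $n=1$: by Corollary \ref{coro:onedim}, $p=0$, so $B=0$ is vacuously strictly upper triangular. For the inductive step, assume the claim holds for $(\bfk[x],P)$-modules of the form $(\mathcal{M}_{n-1},p')$, and let $(\mathcal{M}_{n},p)$ be given. By Proposition \ref{prop:upper}, $p(\overline{x^{n-1}})=0$, so in terms of the standard basis $\overline{x^{n-1}},\ldots,\overline{x},\overline{1}$ the first column of $B$ is zero.

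Next I would invoke Corollary \ref{coro:quo}: $p$ descends to $p'$ on $\mathcal{M}_{n-1}=\mathcal{M}_{n}/(\overline{x^{n-1}})$, with the induced standard basis $\overline{x^{n-2}},\ldots,\overline{x},\overline{1}$ (the images of $\overline{x^{n-2}},\ldots,\overline{1}$). Since the quotient map sends the last $n-1$ basis vectors of $\mathcal{M}_n$ to the standard basis of $\mathcal{M}_{n-1}$ and kills $\overline{x^{n-1}}$, the matrix of $p'$ in that basis is exactly the lower-right $(n-1)\times(n-1)$ submatrix $(b_{ij})_{2\le i,j\le n}$ of $B$. By the induction hypothesis applied to $(\mathcal{M}_{n-1},p')$, this submatrix is strictly upper triangular. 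Combining with the vanishing of the first column of $B$ shows that $B$ itself is strictly upper triangular, completing the induction.

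The only point requiring care is bookkeeping with the basis ordering: one must check that the induced basis on $\mathcal{M}_{n-1}$ that makes $\tau$ into $J_{n-1}$ (so that the induction hypothesis applies in the stated form) is precisely the image of the last $n-1$ standard basis vectors, and that the matrix of $p'$ in this basis is the claimed submatrix of $B$. This is a routine verification using that $\tau(\overline{x^i})=\overline{x^{i+1}}$ and that $\overline{x^{n-1}}$ spans the kernel of the quotient map. I do not anticipate a genuine obstacle here; the heavy lifting has already been done in Proposition \ref{prop:upper} (which gives the zero first column) and Corollary \ref{coro:quo} (which sets up the descent), so this corollary is essentially a formal consequence combined with an induction.
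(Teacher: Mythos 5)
Your proof is correct and follows essentially the same route as the paper: induction on $n$, with the base case immediate and the inductive step given by passing to the quotient $\mathcal{M}_{n-1}=\mathcal{M}_{n}/(\overline{x^{n-1}})$ via Corollary \ref{coro:quo}, together with $p(\overline{x^{n-1}})=0$ from Proposition \ref{prop:upper}. The paper states this very tersely, and your version merely spells out the bookkeeping (zero first column plus strictly upper triangular lower-right block) that the paper leaves implicit.
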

\begin{proof}
Using induction on $n$, when $n=1$, $B=0$, as necessary. The induction step follows from Corollary \ref{coro:quo}.
\end{proof}

\begin{prop}\label{prop:expli}
Let $(\mathcal{M}_{n},p)$ be a $(\bfk[x],P)$-module. Then, $B$ satisfies
\begin{eqnarray}
&b_{ij}=0, \quad 1 \leq j \leq i \leq n& \label{equ:expli1}\\
&b_{i+1,j+1}-b_{i,j} = \displaystyle \sum_{k=i+1}^{j} b_{i,k}b_{k,(j+1)}, 1 \leq i < j \leq n-1. \label{equ:expli2}&
\end{eqnarray}
\end{prop}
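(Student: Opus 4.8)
The plan is to derive both relations directly from the matrix identity $AB-BA=B^{2}$ with $A=J_{n}$, which (in the matrix form recorded just before Proposition~\ref{prop:upper}, i.e.\ the matrix translation of Corollary~\ref{thm:sim}) is equivalent to $(\mathcal{M}_{n},p)$ being a $(\bfk[x],P)$-module. Relation~(\ref{equ:expli1}) is nothing more than Corollary~\ref{coro:upper}: $B$ is strictly upper triangular, so $b_{ij}=0$ whenever $j\le i$. The real work is Relation~(\ref{equ:expli2}), which I would obtain by comparing $(i,j)$-entries of the two sides of $J_{n}B-BJ_{n}=B^{2}$.

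First I would expand the left-hand side. Since the only nonzero entries of $J_{n}=J_{n}(0)$ are the $1$'s on the superdiagonal, one has $(J_{n}B)_{ij}=b_{i+1,j}$ (to be read as $0$ when $i=n$) and $(BJ_{n})_{ij}=b_{i,j-1}$ (to be read as $0$ when $j=1$), so the $(i,j)$-entry of $J_{n}B-BJ_{n}$ is $b_{i+1,j}-b_{i,j-1}$. Next I would expand $(B^{2})_{ij}=\sum_{k=1}^{n}b_{ik}b_{kj}$ and invoke Corollary~\ref{coro:upper} once more: $b_{ik}=0$ unless $k>i$ and $b_{kj}=0$ unless $k<j$, so the sum truncates to $\sum_{k=i+1}^{j-1}b_{ik}b_{kj}$. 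Equating the two expressions gives $b_{i+1,j}-b_{i,j-1}=\sum_{k=i+1}^{j-1}b_{ik}b_{kj}$, and the change of index $j\mapsto j+1$ turns this into exactly~(\ref{equ:expli2}).

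The one point that needs a little care is the range of validity. After the reindexing the entry equation makes sense for $1\le i\le n-1$ and $1\le j\le n-1$; when $j\le i$ every term vanishes by strict upper triangularity (the index set $k=i+1,\dots,j$ of the sum being empty in that case), so the nontrivial content lies precisely in the band $1\le i<j\le n-1$ asserted in the statement. The degenerate rows and columns appearing in my formulas for $(J_{n}B)_{ij}$ and $(BJ_{n})_{ij}$, namely $i=n$ and $j=1$, also carry no information, since the $n$-th row and the first column of $B$, and hence of $B^{2}$, vanish. I do not expect a genuine obstacle here: the argument is a short bookkeeping computation, and the only mild subtlety is lining up the summation bounds with the claimed index range.
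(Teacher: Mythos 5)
Your proposal is correct and follows the same route as the paper: relation (\ref{equ:expli1}) is read off from Corollary \ref{coro:upper}, and relation (\ref{equ:expli2}) comes from comparing the $(i,j)$-entries of $J_{n}B-BJ_{n}=B^{2}$, which is exactly the "direct computation" the paper invokes. Your version simply spells out the bookkeeping (the shift formulas $(J_{n}B)_{ij}=b_{i+1,j}$, $(BJ_{n})_{ij}=b_{i,j-1}$, the truncation of $\sum_k b_{ik}b_{kj}$, and the reindexing $j\mapsto j+1$) that the paper leaves implicit.
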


\begin{proof}
By Corollary \ref{coro:upper}, $B$ is strictly upper triangular. After directly computing and then comparing the two sides of the matrix equation
$$
J_{n}B-BJ_{n}= B^{2},
$$
we have Eq. (\ref{equ:expli1}) and Eq. (\ref{equ:expli2}).
\end{proof}

\begin{lemma}\label{lem:der}
For each $1\leq \ell \leq n-2$, $b_{i,i+\ell}$, $1 \leq i \leq n-\ell-1$ are
completely determined recursively by $b_{n-1,n},b_{n-2,n},\cdots,b_{n-\ell,n}$. In particular, $B$ is completely determined by its $n$-th column.
\end{lemma}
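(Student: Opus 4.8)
The plan is to recast the quadratic constraints (\ref{equ:expli2}) as a recursion running \emph{downward} along each superdiagonal of $B$, fed by the already-computed lower superdiagonals, and then to execute it one superdiagonal at a time starting from the $n$-th column.

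First I would isolate $b_{i,j}$ in (\ref{equ:expli2}): splitting the $k=j$ term off $\sum_{k=i+1}^{j}b_{i,k}b_{k,j+1}$ and gathering the $b_{i,j}$ contributions yields, for all $1\le i<j\le n-1$,
\begin{equation*}
b_{i,j}\,(1+b_{j,j+1})=b_{i+1,j+1}-\sum_{k=i+1}^{j-1}b_{i,k}b_{k,j+1}.
\end{equation*}
Reading this with $j=i+\ell$: on the right, the term $b_{i+1,j+1}$ sits on the same $\ell$-th superdiagonal but one row lower (hence closer to the $n$-th column), every factor $b_{i,k}$ has width $k-i\le\ell-1$, and every factor $b_{k,j+1}$ has width $j+1-k\le\ell-1$ except for $k=i+1$, where $b_{i+1,j+1}$ reappears multiplied by the width-$1$ entry $b_{i,i+1}$. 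So once all entries of width $<\ell$ are known, this relation determines $b_{i,i+\ell}$ from $b_{i+1,i+\ell+1}$, provided $1+b_{i+\ell,i+\ell+1}\neq 0$.

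The only real obstacle is to guarantee that this denominator never vanishes, and this is where I would invoke the standing hypothesis that $(\mathcal{M}_n,p)$ is a $(\bfk[x],P)$-module, so that $B$ really does satisfy all the identities of Proposition \ref{prop:expli}. The needed fact is $b_{m,m+1}\neq-1$ for $2\le m\le n-1$: indeed (\ref{equ:expli2}) with $(i,j)=(m-1,m)$ (admissible since $1\le m-1<m\le n-1$) reads $b_{m,m+1}-b_{m-1,m}=b_{m-1,m}b_{m,m+1}$, i.e. $b_{m,m+1}=b_{m-1,m}(1+b_{m,m+1})$, which is impossible when $b_{m,m+1}=-1$. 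Since the denominator occurring in the recursion is always of the form $1+b_{m,m+1}$ with $2\le m\le n-1$, it is always nonzero.

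The Lemma then follows by (strong) induction on $\ell$. For $\ell=1$ the displayed relation becomes $b_{i,i+1}(1+b_{i+1,i+2})=b_{i+1,i+2}$, so running $i$ down from $n-2$ to $1$ expresses each $b_{i,i+1}$ through $b_{i+1,i+2}$, hence through the seed $b_{n-1,n}$. For the inductive step, assume every superdiagonal of width $<\ell$ is determined by $b_{n-1,n},\dots,b_{n-\ell+1,n}$ (their bottom entries being precisely those $n$-th-column entries); then the displayed relation with $j=i+\ell$ determines $b_{i,i+\ell}$ from $b_{i+1,i+\ell+1}$ together with width-$<\ell$ entries, and running $i$ down from $n-\ell-1$ to $1$ — with the value $b_{n-\ell,n}$ (which is $b_{i+1,i+\ell+1}$ at $i=n-\ell-1$) as seed — shows that $b_{i,i+\ell}$ for $1\le i\le n-\ell-1$ is determined by $b_{n-1,n},\dots,b_{n-\ell,n}$. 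Finally, every entry of the strictly upper triangular matrix $B$ lies on some superdiagonal $\ell\in\{1,\dots,n-1\}$, whose bottom entry $b_{n-\ell,n}$ belongs to the $n$-th column while its remaining entries are controlled by the Lemma (the case $\ell=n-1$ contributing only the $n$-th-column entry $b_{1,n}$ itself); hence $B$ is determined by its $n$-th column.
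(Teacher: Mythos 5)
Your proof is correct and takes essentially the same route as the paper's: induction on the superdiagonal index $\ell$, isolating the width-$\ell$ entry from Eq.~(\ref{equ:expli2}) by dividing by $1+b_{j,j+1}$ and recursing down each superdiagonal to its $n$-th-column entry. Your explicit verification that the denominators $1+b_{m,m+1}$, $2\le m\le n-1$, cannot vanish is a worthwhile detail that the paper's proof uses implicitly without comment.
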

\begin{proof}
We prove this lemma by induction on $\ell$. First, we rearrange Eq. (\ref{equ:expli2}) as follows:
\begin{eqnarray}
\ell=1, \qquad b_{i+1,i+2} -b_{i,i+1}&=&b_{i,i+1}b_{i+1,i+2}, \quad  1\leq i \leq n-2; \label{equ:eq1}\\
\cdots \qquad \qquad  \qquad \qquad &\cdots&  \qquad \qquad \qquad \qquad  \cdots \notag \\
\ell = \ell, \qquad b_{i+1,i+\ell+1} -b_{i,i+\ell}&=& \sum_{k=i+1}^{i+\ell}b_{i,k}b_{k,i+\ell+1}, \quad  1\leq i \leq n-\ell-1;\label{equ:eq2}\\
\cdots \qquad \qquad  \qquad \qquad &\cdots&   \qquad \qquad \qquad \qquad \qquad  \cdots \notag \\
\ell = n-2, \qquad b_{2,n} -b_{1,n-1}&=& \sum_{k=2}^{n-1}b_{1,k}b_{k,n}, \quad   i=1. \label{equ:eq3} \qquad \qquad
\end{eqnarray}
When $\ell=1$, by Eq. (\ref{equ:eq1}), we obtain
\begin{equation}\label{equ:for}
b_{i,i+1} = \frac{b_{i+1,i+2}}{1+b_{i+1,i+2}}, \quad  1 \leq i \leq n-2.
\end{equation}
Then, $b_{i,i+1}$, $1\leq i \leq n-2$ are determined by $b_{n-1,n}$, and the formulae are
$$
b_{12} = \frac{b_{n-1,n}}{1+(n-2)b_{n-1,n}},\quad b_{23}=\frac{b_{n-1,n}}{1+(n-3)b_{n-1,n}}, \quad \cdots, \quad b_{n-2,n-1}=\frac{b_{n-1,n}}{1+b_{n-1,n}}.
$$
Assume that the statement is true for $1 \leq \ell \leq n-3$ and we consider the case for $\ell+1$. By Eq. (\ref{equ:eq2}), we obtain
\begin{equation}\label{equ:step2}
b_{i,i+\ell+1} = \frac{b_{i+1,i+\ell+2}(1-b_{i,i+1}) -(\sum_{k=i+2}^{i+\ell}b_{i,k}b_{k,i+\ell+2})}{1+b_{i+\ell+1,i+\ell+2}},1\leq i \leq n-\ell-1.
\end{equation}
By the induction hypothesis, we find that $b_{i,i+1}, b_{i+\ell+1,i+\ell+2}, b_{i,k}b_{k,i+\ell+2}, k=i+1,\cdots,i+\ell$ are all determined by $b_{n-1,n},b_{n-2,n},\cdots,b_{n-\ell,n}$. By combining this with Eq. (\ref{equ:step2}), we find that $b_{i,i+\ell+1}$, $1\leq i \leq n-\ell-1$ are determined by $b_{n-1,n},b_{n-2,n},\cdots,b_{n-\ell,n}$ and $b_{n-\ell-1,n}$, which completes the induction.
\end{proof}

\begin{coro}
Let $(\mathcal{M}_{n},p)$ be a $(\bfk[x],P)$-module. Then, $p$ is determined completely by $p(\overline{1})$.
\end{coro}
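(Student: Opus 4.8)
The plan is to read this statement off directly from Lemma~\ref{lem:der}, after one bookkeeping observation about bases. First I would fix conventions: on $\mathcal{M}_{n}=\bfk[x]/(x^{n})$ we use the standard basis $\overline{x^{n-1}},\overline{x^{n-2}},\ldots,\overline{x},\overline{1}$, so that $\overline{1}$ is the $n$-th basis vector. If $B=(b_{ij})$ is the matrix of $p$ in this basis, then by definition $p(\overline{1})=\sum_{i=1}^{n}b_{i,n}v_{i}$, i.e. the coordinate vector of $p(\overline{1})$ is precisely the $n$-th column $(b_{1,n},\ldots,b_{n,n})^{\mathrm{T}}$ of $B$. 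By Corollary~\ref{coro:upper} we have $b_{n,n}=0$ (indeed $b_{i,n}=0$ for $i\ge n$, but only the last coordinate is relevant here), so specifying $p(\overline{1})$ is the same as specifying the scalars $b_{1,n},\ldots,b_{n-1,n}$.

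Next I would invoke Lemma~\ref{lem:der}: for each $1\le\ell\le n-2$ the entries $b_{i,i+\ell}$ ($1\le i\le n-\ell-1$) are determined recursively by $b_{n-1,n},b_{n-2,n},\ldots,b_{n-\ell,n}$, and Corollary~\ref{coro:upper} already forces every entry on or below the diagonal to vanish. Hence every entry of $B$ is a function of the entries of its $n$-th column. Combining this with the first paragraph, the matrix $B$ — equivalently, the $\bfk$-linear map $p$ — is completely determined by the single vector $p(\overline{1})$, which is the assertion.

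I do not expect any genuine obstacle: the mathematical content is entirely contained in Lemma~\ref{lem:der}, and the corollary merely restates it in module-theoretic language. The only point deserving a moment's care is the indexing, namely checking that ``the $n$-th column of $B$'' and ``the coordinate vector of $p(\overline{1})$'' name the same data; this is immediate once the basis is written in the fixed order. (Note that the corollary claims only uniqueness of $p$ for a given $(\bfk[x],P)$-module structure, not that an arbitrary choice of $p(\overline{1})$ extends to such a structure — that existence question is governed by the consistency of the recursion in Lemma~\ref{lem:der} and is not part of what must be proved here.)
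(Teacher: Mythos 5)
Your argument is correct and is essentially the paper's own proof: identify the coordinate vector of $p(\overline{1})$ in the standard basis with the $n$-th column of $B$ and then invoke Lemma~\ref{lem:der} (together with Corollary~\ref{coro:upper}) to conclude that all of $B$ is determined by that column. Your closing remark correctly delimits the claim as uniqueness only, which matches the paper's intent.
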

\begin{proof}
Given that $(b_{1n},\cdots, b_{n-1,n},0)^{T}$ are the coordinates of $p(\overline{1})$ under the basis $\overline{x^{n-1}},\cdots, \overline{x} , \overline{1}$, then Lemma \ref{lem:der} implies that $p$ is determined completely by $p(\overline{1})$.
\end{proof}

\begin{prop}\label{prop:zero}
Let $(\mathcal{M}_{n},p)$ be a $(\bfk[x],P)$-module. If $B$ satisfies $b_{n-1,n}=\cdots=b_{n-i+1,n}=0$ and $1 \leq t-s \leq i-1$, then $b_{s,t}=0$.
\end{prop}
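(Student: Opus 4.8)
The plan is to run a double induction, using the recursive structure of the entries of $B$ established in Lemma~\ref{lem:der}. Concretely, I will prove the statement by induction on $i$, and for each fixed $i$, by induction on the "anti-diagonal index" $\ell = t-s$ (equivalently, by downward induction on $s$, or upward induction on which sub-super-diagonal the entry $b_{s,t}$ lies on). The base case $i=1$ is vacuous since the hypothesis $1 \le t-s \le i-1 = 0$ is empty. So assume the claim holds for $i-1$; that is, if $b_{n-1,n}=\cdots=b_{n-i+2,n}=0$ then $b_{s,t}=0$ for all $1\le t-s\le i-2$. Now suppose in addition $b_{n-i+1,n}=0$, and we must deduce $b_{s,t}=0$ whenever $1\le t-s\le i-1$; by the inductive hypothesis the only new entries to handle are those on the $(i-1)$-st super-diagonal, i.e. $b_{s,s+i-1}$ for $1 \le s \le n-i+1$.

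For these entries I would induct on $\ell := 1,2,\dots,i-1$, showing $b_{s,s+\ell}=0$ for all valid $s$. Actually, since entries with $t-s \le i-2$ already vanish by the $i-1$ case, the only step that matters is $\ell = i-1$; but it is cleanest to re-derive all of them inside this single induction using Eq.~(\ref{equ:step2}) from Lemma~\ref{lem:der} (with the obvious $\ell=1$ analogue Eq.~(\ref{equ:for})). The key point is the recursion
$$
b_{s,s+\ell+1} = \frac{b_{s+1,s+\ell+2}\,(1-b_{s,s+1}) - \sum_{k=s+2}^{s+\ell}b_{s,k}b_{k,s+\ell+2}}{1+b_{s+\ell+1,s+\ell+2}},
$$
which expresses an entry on the $(\ell+1)$-st super-diagonal in terms of entries strictly closer to the main diagonal together with the single "seed" entry $b_{s+1,s+\ell+2}$ one row up on the same super-diagonal. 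Feeding in the hypothesis $b_{n-1,n}=\cdots=b_{n-i+1,n}=0$ together with the already-established vanishing of lower super-diagonals, the numerator collapses entirely: the term $\sum b_{s,k}b_{k,s+\ell+2}$ vanishes because each factor $b_{s,k}$ sits on a super-diagonal of index $<\ell+1 \le i-1$, and the seed $b_{s+1,s+\ell+2}$ vanishes by descending induction on $s$ starting from the top row, whose seed is exactly one of the prescribed zero entries $b_{n-i+1,n},\dots,b_{n-1,n}$. Hence $b_{s,s+\ell+1}=0$, closing the induction on $\ell$ and therefore on $i$.

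The main obstacle — really the only thing needing care — is bookkeeping the ranges of indices so that the "seed" $b_{s+1,s+\ell+2}$ in Eq.~(\ref{equ:step2}), when one climbs to the topmost applicable row, lands precisely among the hypothesized zeros $b_{n-i+1,n},\dots,b_{n-1,n}$ (i.e. that the topmost entry on the $\ell$-th super-diagonal is $b_{n-\ell,n}$, which is zero by hypothesis as long as $\ell \le i-1$), and that every quadratic cross-term $b_{s,k}b_{k,s+\ell+2}$ genuinely involves a strictly-lower super-diagonal so the outer induction on $i$ (via the $i-1$ case) applies. Once the index arithmetic is pinned down, the recursion does all the work and the conclusion is immediate.
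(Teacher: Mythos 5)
Your argument is correct and is essentially the paper's own proof: induct along the super-diagonal index $\ell=t-s$, use the recursion from Proposition~\ref{prop:expli} (equivalently Eq.~(\ref{equ:step2})) to kill the quadratic cross-terms via the vanishing of lower super-diagonals, and propagate the hypothesized zeros $b_{n-1,n},\dots,b_{n-i+1,n}$ in the last column across each super-diagonal. Your outer induction on $i$ is redundant (the single induction on $\ell$ suffices, as you yourself note), but this is only a cosmetic difference.
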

\begin{proof}
We prove the statement by induction on $\ell=t-s$. When $\ell=1$, we have $b_{12}=\cdots=b_{n-1,n}=0$. Assume that the statement is true for $1\leq \ell \leq i-2$ and consider the case of $\ell+1$. By Proposition \ref{prop:expli}, we have
$$
b_{s+1,s+\ell+2}-b_{s,s+\ell+1} = \displaystyle \sum_{k=s+1}^{s+\ell} b_{s,k}b_{k,(s+\ell+2)}.
$$
Since $s+1\leq k \leq s+\ell$, i.e., $k-s\leq \ell$, then we have $b_{s,k}=0$ and thus $b_{1,\ell+2}=\cdots = b_{n-\ell-1,n}=0$, which completes the induction.
\end{proof}

\begin{prop}\label{prop:center}
If $n \geq 3, b_{n-1,n}=\cdots=b_{n-i+1,n}=0$, $b_{n-i,n} \neq 0$, $[\frac{n-1}{2}] < i \leq n-1$, then
$$
B = b_{n-i,n}(J_{n})^{i}+ b_{n-i-1,n}(J_{n})^{i+2}+\cdots+b_{1,n}(J_{n})^{n-1}.
$$
\end{prop}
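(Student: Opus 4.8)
The plan is to use the arithmetic restriction $[\frac{n-1}{2}]<i$ to collapse the defining matrix equation $J_{n}B-BJ_{n}=B^{2}$ into the pure commutation relation $J_{n}B=BJ_{n}$, and then to read off $B$ from the classical description of the centralizer of a single Jordan block. Recall from Corollary~\ref{thm:sim}, together with the matrix reformulation at the start of this subsection, that $(\mathcal{M}_{n},p)$ is a $(\bfk[x],P)$-module if and only if $J_{n}B-BJ_{n}=B^{2}$, so the whole point is to prove that under the stated hypotheses $B^{2}=0$.

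First I would locate the support of $B$. By Corollary~\ref{coro:upper} the matrix $B$ is strictly upper triangular, and combining the hypothesis $b_{n-1,n}=\cdots=b_{n-i+1,n}=0$ with Proposition~\ref{prop:zero} shows that $b_{s,t}=0$ whenever $1\le t-s\le i-1$; hence every nonzero entry of $B$ sits on a superdiagonal of index at least $i$. Next I would note that $[\frac{n-1}{2}]<i$ forces $2i\ge n$, a one-line check after separating the cases $n$ even and $n$ odd. Consequently, in the expansion $(B^{2})_{s,t}=\sum_{r}b_{s,r}b_{r,t}$ a nonzero summand would require both $r-s\ge i$ and $t-r\ge i$, so $t-s\ge 2i\ge n$, which is impossible for an $n\times n$ strictly upper triangular matrix, where always $t-s\le n-1$. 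Therefore $B^{2}=0$, and the defining equation degenerates to $J_{n}B=BJ_{n}$.

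The remainder is formal. Since $B$ commutes with the single nilpotent Jordan block $J_{n}=J_{n}(0)$, it lies in the algebra $\bfk[J_{n}]$ of polynomials in $J_{n}$, which has $\bfk$-basis $I,J_{n},\dots,J_{n}^{n-1}$; write $B=\sum_{\ell=0}^{n-1}c_{\ell}J_{n}^{\ell}$. Strict upper triangularity gives $c_{0}=0$, and the vanishing of the superdiagonals of indices $1,\dots,i-1$ gives $c_{1}=\cdots=c_{i-1}=0$, so $B=\sum_{\ell=i}^{n-1}c_{\ell}J_{n}^{\ell}$. Comparing last columns, the $(n-\ell,n)$-entry of $J_{n}^{\ell}$ equals $1$ while $J_{n}^{\ell'}$ contributes $0$ there for $\ell'\ne\ell$, so $c_{\ell}=b_{n-\ell,n}$ for $i\le\ell\le n-1$; this yields $B=\sum_{\ell=i}^{n-1}b_{n-\ell,n}(J_{n})^{\ell}=b_{n-i,n}(J_{n})^{i}+b_{n-i-1,n}(J_{n})^{i+1}+\cdots+b_{1,n}(J_{n})^{n-1}$, which is the asserted description of $B$ as a combination of powers of $J_{n}$ with coefficients read off from its last column (the hypothesis $b_{n-i,n}\ne0$ is used only to ensure that the lowest power $(J_{n})^{i}$ genuinely occurs).

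I expect the only real step to be the reduction $B^{2}=0$, and that hinges entirely on the inequality $2i\ge n$ extracted from $[\frac{n-1}{2}]<i$; once this is in place, everything else is the standard fact that the centralizer of a regular nilpotent matrix consists of its own polynomials, together with a short bookkeeping of superdiagonals. No genuine obstacle is anticipated; the one point to be careful about is the index arithmetic when identifying $c_{\ell}$ with the last-column entries $b_{n-\ell,n}$ (in particular, that the powers appearing run consecutively from $i$ to $n-1$).
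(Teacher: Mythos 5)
Your argument is correct and is essentially the paper's proof repackaged: both hinge on the same arithmetic, namely that all nonzero entries of $B$ lie on superdiagonals of index $\geq i$ (Corollary \ref{coro:upper} plus Proposition \ref{prop:zero}) while $[\frac{n-1}{2}]<i$ forces $2i\geq n$, so every quadratic term vanishes — the paper carries this out entrywise inside the recursion of Proposition \ref{prop:expli} to conclude $b_{s+1,t+1}=b_{s,t}$ (constancy along superdiagonals), whereas you phrase it as $B^{2}=0$, hence $J_{n}B=BJ_{n}$, and then quote the standard description of the centralizer of a regular nilpotent Jordan block, which yields the same Toeplitz form with coefficients read off the last column. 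One small remark: the exponent $i+2$ in the statement is evidently a typo for $i+1$ (the exponents run consecutively from $i$ to $n-1$), and your final formula agrees with the one obtained at the end of the paper's own proof.
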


\begin{proof}
By Proposition \ref{prop:expli}, we have
$$
b_{s+1,t+1}-b_{s,t} = \displaystyle \sum_{k=s+1}^{t} b_{s,k}b_{k,(t+1)}, 1 \leq s < t \leq n-1.
$$
By Proposition \ref{prop:zero}, either $k-s \leq i-1$ or $t+1-k \leq i-1$ implies that $b_{s,k}b_{k,(t+1)}=0$. If $b_{s+1,t+1}-b_{s,t} \neq 0$, and there is at least one $s+1 \leq k_{0} \leq t$ such that $k_{0}-s \geq i$ and $t+1-k_{0} \geq i$. Then,
$$
k_{0}-s+t+1-k_{0}=t-s+1 \geq 2i.
$$
Thus, $t+1 \geq 2i+s>n-1+s>n$, which contradicts $t \leq n-1$. Hence, $b_{s+1,t+1}=b_{s,t}, 1 \leq s < t \leq n-1$ and thus
$$
B = b_{n-i,n}(J_{n})^{i}+ b_{n-i-1,n}(J_{n})^{i+1}+\cdots+b_{1,n}(J_{n})^{n-1}.
$$
\end{proof}

\begin{prop}\label{prop:mid}
If $n\geq 5, b_{n-1,n}=\cdots=b_{n-i+1,n}=0$, $b_{n-i,n} \neq 0$, $2\leq i \leq [\frac{n-1}{2}]$, then
$$
b_{n-t,n}=b_{n-t-1,n-1}=\cdots = b_{1,t+1}, \quad i \leq t \leq 2i-2.
$$
\end{prop}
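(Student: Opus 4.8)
The plan is to mimic the argument used in the proof of Proposition~\ref{prop:center}, but now exploiting the sharper bound $t\le 2i-2$, which will force the relevant sum to vanish term by term. Fix $t$ with $i\le t\le 2i-2$; the goal is to show that the entry $b_{s,s+t}$ is independent of $s$ for $1\le s\le n-t$, i.e.\ that the $t$-th superdiagonal of $B$ is constant along its length.

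First I would record what the hypothesis $b_{n-1,n}=\cdots=b_{n-i+1,n}=0$ gives through Proposition~\ref{prop:zero}: these are $i-1$ vanishing entries of the last column, so $b_{a,b}=0$ whenever $1\le b-a\le i-1$; that is, every superdiagonal of $B$ at distance less than $i$ is zero. Next I would apply the recurrence Eq.~(\ref{equ:expli2}) from Proposition~\ref{prop:expli} with the two indices a distance $t$ apart, namely
$$
b_{s+1,s+t+1}-b_{s,s+t}=\sum_{k=s+1}^{s+t}b_{s,k}\,b_{k,s+t+1},\qquad 1\le s\le n-1-t.
$$
In each summand, the factor $b_{s,k}$ lies on the superdiagonal at distance $k-s$ and $b_{k,s+t+1}$ on the superdiagonal at distance $s+t+1-k$, and these two distances (both in $\{1,\dots,t\}$) add up to $t+1$. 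By the observation above, a summand can be nonzero only if both distances are at least $i$, forcing $t+1\ge 2i$ and contradicting $t\le 2i-2$. Hence the right-hand side is zero, so $b_{s,s+t}=b_{s+1,s+t+1}$ for every admissible $s$, and chaining these equalities over $s=1,\dots,n-1-t$ yields $b_{1,t+1}=b_{2,t+2}=\cdots=b_{n-t,n}$, which is the asserted identity.

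There is no substantive obstacle here beyond bookkeeping: one should verify that the index range $1\le s\le n-1-t$ is nonempty, which follows from $t\le 2i-2\le n-3$ (using $2[\frac{n-1}{2}]\le n-1$), so that the chain of equalities genuinely connects all of $b_{n-t,n},\dots,b_{1,t+1}$; and one must keep track of the two ``distances'' in each product in order to invoke Proposition~\ref{prop:zero} correctly. Note that the hypothesis $b_{n-i,n}\neq 0$ plays no role in this particular claim, and that for $t<i$ the same identity holds trivially since all the entries involved are then zero, which is why the statement is phrased only for $t\ge i$.
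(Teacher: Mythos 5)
Your proof is correct and is essentially the same as the paper's: both apply the recurrence of Proposition~\ref{prop:expli} along the $t$-th superdiagonal and use Proposition~\ref{prop:zero} to kill each summand, since the two distances in $b_{s,k}b_{k,s+t+1}$ sum to $t+1\le 2i-1$ and so one of them is below $i$. The only difference is notational (you index by $s$ rather than the paper's $j$), and your added remarks on the nonemptiness of the index range and the irrelevance of $b_{n-i,n}\neq 0$ are accurate.
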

\begin{proof}
Since
\begin{eqnarray*}
&b_{n-t-j,n-j}-b_{n-t-j-1,n-j-1} = \displaystyle \sum_{k=n-t-j}^{n-j-1}b_{n-t-j-1,k}b_{k,n-j},~~0\leq j \leq n-t-1,&\\
&k-(n-t-j-1)+(n-j)-k=t+1 \leq 2i-2+1=2i-1,&
\end{eqnarray*}
then we have either $k-(n-t-j-1)<i$ or $(n-j)-k<i$. Hence,
$$
b_{n-t-j,n-j}-b_{n-t-j-1,n-j-1}=0,~~0\leq j \leq n-t-1.
$$
\end{proof}

\begin{exam}\label{exam:ex}
\begin{enumerate}
\item If $b_{n-1,n}=b\neq 0, b_{n-2,n}=c$, then
$$
B = \left(\begin{array}{ccccccc} 0&\frac{b}{1+(n-2)b}& c-(n-3)b^{2}&\cdots&\ast&\ast&b_{1n} \\
0&0&\frac{b}{1+(n-3)b}&\cdots&\ast&\ast&b_{2n}  \\
\vdots&\vdots&\vdots&\ddots&\vdots&\vdots&\vdots\\
0&0&0&\cdots&\frac{b}{1+2b}&c-b^{2}&b_{n-3,n}\\
0&0&0&\cdots&0&\frac{b}{1+b}&c\\
0&0&0&\cdots&0&0&b\\
0&0&0&\cdots&0&0&0
\end{array}\right).
$$
\item If $n=7$, $b_{67}=b_{57}=b_{47}=0$ and $b_{37}\neq0$, i.e., $i=4>[\frac{7-1}{2}]$, then
$$
B=\left(\begin{array}{ccccccc} 0&0&0&0&b_{37}&b_{27}&b_{17} \\
0&0&0&0&0&b_{37}&b_{27} \\
0&0&0&0&0&0&b_{37}\\
0&0&0&0&0&0&0\\
0&0&0&0&0&0&0\\
0&0&0&0&0&0&0\\
0&0&0&0&0&0&0
\end{array}\right).
$$
\item If $n=7$, $b_{67}=b_{57}=0$ and $b_{47}\neq0$, i.e., $i=3 \leq [\frac{7-1}{2}]$, then
$3 \leq t \leq 4$ and
$$
B=\left(\begin{array}{ccccccc} 0&0&0&b_{47}&b_{37}&b_{27}-b_{47}^{2}&b_{17} \\
0&0&0&0&b_{47}&b_{37}&b_{27} \\
0&0&0&0&0&b_{47}&b_{37}\\
0&0&0&0&0&0&b_{47}\\
0&0&0&0&0&0&0\\
0&0&0&0&0&0&0\\
0&0&0&0&0&0&0
\end{array}\right).
$$
\end{enumerate}
\end{exam}

\subsection{Classification of $(\mathcal{M}_{n},p)$}\label{subsec:class}
Recall that an $(\mathcal{M}_{n},p)$ can be regarded as $\mathcal{M}_{n}$ with two linear maps $\tau,p$ and the matrices of $\tau, p$ are $J_{n},B$ under the basis $\overline{x^{n-1}},\cdots, \overline{x},\overline{1}$, respectively.
The stabilizer of $J_{n}$ under the action of conjugation is
\begin{eqnarray*}
G_{n} &=& \{S \in GL(n,\bfk)~|~J_{n}=S^{-1}J_{n}S\}\\
&=& \{S~|~S=s_{0}I+s_{1}J_{n}+\cdots+s_{n-1}J_{n}^{n-1}, s_{0}\neq 0, s_{1},\cdots,s_{n-1} \in \bfk\},
\end{eqnarray*}
and the centralizer of $G_{n}$ is
\begin{eqnarray*}
Z(G_{n})= \{X \in {\Mnk}~|~X=a_{0}I+a_{1}J_{n}+\cdots+a_{n-1}J_{n}^{n-1}, a_{0},\cdots,a_{n-1} \in \bfk\}.
\end{eqnarray*}

Let $(\mathcal{M}_{n},p)$, $(\mathcal{M}_{n},q)$ be two isomorphic $(\bfk[x],P)$-modules and the matrices of $p,q$ corresponding to the standard basis are $B_{p}, B_{q}$ respectively. Then, $(\mathcal{M}_{n},p)$ is isomorphic to $(\mathcal{M}_{n},q)$ if and only if  a $S\in G_{n}$ exists such that $B_{p}=S^{-1}B_{q}S$. Let $X_{j}$ be the $j$-th column (from left to right) of $X \in {\Mnk}$ and $X_{j}(i)$ is the $i$-th array of $X_{j}$. It is easy to see that $B_{n}$ are the coordinates of $p(\overline{1})$ under the standard basis and $B_{n}(i)$ is the coefficient of $\overline{x^{n-i}}$.

Let $\calp(\mathcal{M}_{n})= \{p \in {\rm End_{\bfk}}(\mathcal{M}_{n})~|~ (\mathcal{M}_{n},p)~ \mbox{is a}~ (\bfk[x],P)\mbox{-module}\}$ and define a map $\Psi$ from $\calp(\mathcal{M}_{n})$ to $\bfk^{n}$ by
\begin{eqnarray*}
\Psi: \calp(\mathcal{M}_{n}) &\longrightarrow& \bfk^{n}, \\
p &\mapsto& B_{n},
\end{eqnarray*}
i.e., $\Psi(p) =B_{n}= (b_{1n},\cdots,b_{n-1,n},0)^{T}$.

\begin{defn}
Let $n \geq 2$, $p \in \calp(\mathcal{M}_{n})$ and $\Psi(p) =(b_{1n},\cdots,b_{n-1,n},0)^{T}$. If $b_{n-1,n}=\cdots=b_{n-i+1,n}=0$ and $b_{n-i,n}\neq 0$, then the integer $1\leq i\leq n-1$ is called {\bf the depth} of $p$ or $B$ and it is denoted by $\dep(p)$ or $\dep(B)$, respectively. If $p=0$, we define $\dep(p)=\dep(B)=n$. By Corollary \ref{coro:upper}, we have $1\leq \dep(B) \leq n$.
\end{defn}

\begin{lemma}\label{lem:pre1}
Let $(\mathcal{M}_{n},p)$ be a $(\bfk[x],P)$-module.
\begin{enumerate}
\item[(1)] If $\dep(B)=1$, then
\begin{equation}\label{equ:inv}
(S^{-1}BS)_{n}(j)=B_{n}(j),\quad \forall~ S \in G_{n},~~ j=n-1,n.
\end{equation}
\item[(2)] If $\dep(B)=i$, $2\leq i \leq [\frac{n-1}{2}]$, then
\begin{equation}\label{equ:inv2}
(S^{-1}BS)_{n}(j)=B_{n}(j), \quad \forall~ S \in G_{n}, j=n-2i+1,\cdots,n.
\end{equation}
\end{enumerate}
\end{lemma}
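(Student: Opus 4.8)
The plan is to prove the uniform statement that, writing $i=\dep(B)$, the matrices $S^{-1}BS$ and $B$ coincide in every entry on one of the diagonals $0,1,\dots,2i-1$ (the main diagonal counted as $0$). Reading this off in the last column then yields (1) as the case $i=1$ (where $n-2i+1=n-1$) and (2) directly.

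First I would set up a filtration recording how far above the diagonal a matrix is supported: for $d\geq 0$ put $F_d=\{M=(m_{st})\mid m_{st}=0\ \text{whenever}\ t-s<d\}$, so $F_0$ is the algebra of upper triangular matrices and $F_0\supseteq F_1\supseteq\cdots\supseteq F_n=0$. Straight from the definition one gets $F_dF_e\subseteq F_{d+e}$ and $J_n^{a}F_d\subseteq F_{d+a}$, $F_dJ_n^{a}\subseteq F_{d+a}$. By Corollary~\ref{coro:upper}, $B\in F_1$; and since $\dep(B)=i$, Proposition~\ref{prop:zero} gives $b_{s,t}=0$ for all $1\leq t-s\leq i-1$, i.e. $B\in F_i$, whence $B^{2}\in F_{2i}$.

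Next I would use the Rota--Baxter relation to control the conjugation. Write any $S\in G_n$ as $S=s_0I+N$ with $N=\sum_{k=1}^{n-1}s_kJ_n^{k}$; since $s_0I$ is central, $S^{-1}BS-B=S^{-1}(BS-SB)=S^{-1}[B,N]$. In matrix form Corollary~\ref{thm:sim} says $[J_n,B]=J_nB-BJ_n=B^{2}$, so the commutator Leibniz rule $[J_n^{k},B]=\sum_{j=0}^{k-1}J_n^{j}[J_n,B]J_n^{\,k-1-j}$ gives, for each $k\geq 1$,
\[
[B,J_n^{k}]=-\sum_{j=0}^{k-1}J_n^{j}B^{2}J_n^{\,k-1-j}\in F_{2i+k-1}\subseteq F_{2i},
\]
hence $[B,N]\in F_{2i}$. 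As $S$ is upper triangular and invertible, $S^{-1}\in F_0$, and $F_0F_{2i}\subseteq F_{2i}$ gives $S^{-1}BS-B=S^{-1}[B,N]\in F_{2i}$. Therefore $(S^{-1}BS)_{s,t}=b_{s,t}$ whenever $t-s\leq 2i-1$; specializing $t=n$ yields $(S^{-1}BS)_n(j)=B_n(j)$ for $j=n-2i+1,\dots,n$, which is (2), and (1) is the case $i=1$.

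The crux, rather than a genuine obstacle, is the single observation that $[J_n,B]=B^{2}$ (Corollary~\ref{thm:sim}) forces every commutator $[B,J_n^{k}]$ into $F_{2i}$, so that conjugation by $G_n$ cannot disturb the entries of $B$ on the diagonals $0,1,\dots,2i-1$; with this in hand, the filtration $F_d$ makes the rest mechanical. Minor points to keep in mind: the scalar summand $s_0I$ of $S$ drops out of the commutator, so $S$ need not be normalized; Proposition~\ref{prop:zero} is exactly what promotes $B\in F_1$ to $B\in F_i$; and the hypothesis $i\leq[\frac{n-1}{2}]$ in (2) is never used — it only singles out the depths not already covered by (1) and by Proposition~\ref{prop:center} (where $B\in Z(G_n)$, so $S^{-1}BS=B$ outright).
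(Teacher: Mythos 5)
Your argument is correct, but it takes a genuinely different route from the paper. You prove the uniform statement that $S^{-1}BS-B$ lies in the filtration piece $F_{2i}$ (all entries on diagonals $0,\dots,2i-1$ agree), by combining $B\in F_i$ (Corollary \ref{coro:upper} plus Proposition \ref{prop:zero}) with the matrix form $[J_n,B]=B^2$ of Corollary \ref{thm:sim}, the Leibniz identity $[J_n^k,B]=\sum_{j}J_n^{\,j}B^2J_n^{\,k-1-j}\in F_{2i}$, and the fact that the scalar part of $S\in G_n$ drops out of the commutator; reading off the last column gives both cases at once. The paper instead works entirely with the last column: in case (1) it normalizes $S=I+s_1J_n+\cdots$ and computes $(S^{-1}BS)\varepsilon_n$ explicitly, using only that $B$ is strictly upper triangular; in case (2) it first splits $B=\star+(B-\star)$ with $\star=b_{n-i,n}J_n^{i}+\cdots+b_{n-2i+2,n}J_n^{2i-2}\in Z(G_n)$ — this step rests on Proposition \ref{prop:mid} — and then repeats the same column computation for $B-\star$, whose support starts only at diagonal $2i-1$. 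Your approach buys uniformity and economy: it needs neither Proposition \ref{prop:mid} nor the hypothesis $i\le[\frac{n-1}{2}]$, and it proves the stronger fact that conjugation by $G_n$ fixes every entry of $B$ on the diagonals below $2i$, not just in the $n$-th column. What the paper's computation buys is that it stays at the level of the invariant $\Psi(p)=B_n$ actually used in the classification, and the central splitting via Proposition \ref{prop:mid} is set up there because it is reused immediately in the proof of Lemma \ref{lem:mid1}. One small remark: as you note, your proof never uses $b_{n-i,n}\neq 0$, only the vanishing $b_{n-1,n}=\cdots=b_{n-i+1,n}=0$, which is exactly the hypothesis under which Proposition \ref{prop:zero} applies, so the statement you obtain is in fact slightly more general than the lemma as stated.
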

\begin{proof}
\noindent (1) ~~ We may assume that $S=I+s_{1}J_{n}+\cdots+s_{n-1}J_{n}^{n-1}$ and $S^{-1}=I+t_{1}J_{n}+\cdots+t_{n-1}J_{n}^{n-1}$. We have
\begin{eqnarray*}
(S^{-1}BS)_{n} = (S^{-1}BS)\varepsilon_{n} = S^{-1}B \left(\begin{array}{c}s_{n-1}\\
\vdots \\ s_{1} \\1 \end{array}\right) = \begin{pmat}({....})
&\ast&\ast& \cr \-
0&\cdots&1&t_{1}\cr
0&\cdots&0&1\cr
\end{pmat}  \left(\begin{array}{c}\ast\\
\vdots \\ \ast \\ b_{n-1,n} \\0 \end{array}\right) = \left(\begin{array}{c}\ast\\
\vdots \\ \ast \\ b_{n-1,n} \\0 \end{array}\right) .
\end{eqnarray*}
Then, $(S^{-1}BS)_{n}(j)=B_{n}(j), \quad j=n-1,n$.

\noindent (2)~~By Proposition \ref{prop:mid} and if we let $\star = b_{n-i,n}J_{n}^{i}+ \cdots + b_{n-2i+2,n}J_{n}^{2i-2}$, then we have
$$
B = \star+ (B-\star).
$$
Since $\star \in Z(G_{n})$, then we have
$$
S^{-1}BS= \star+ S^{-1}(B-\star)S,
$$
\begin{eqnarray*}
S^{-1}(B-\star)S &=& S^{-1}\left(\begin{array}{ccccccc}
0&\cdots&\ast&\ast&\cdots&\ast&b_{1n} \\
0& \cdots&0&\ast&\cdots&\ast&b_{2n}  \\
0& \cdots&\vdots&\vdots&\ddots&\vdots&\vdots\\
0& \cdots&0&0&\cdots&\ast&b_{n-2i,n}\\
0& \cdots&0&0&\cdots&0&b_{n-2i+1,n}\\
0& \cdots&0&0&\cdots&0&0\\
0& \cdots&\vdots&\vdots&\cdots&\vdots&\vdots\\
0& \cdots&0&0&\cdots&0&0
\end{array}\right) \left(\begin{array}{c}s_{n-1}\\
\vdots \\ s_{1} \\1 \end{array}\right) \\
&=& \begin{pmat}({....})
&&\ast& \ast &&\ast\cr \-
0&\cdots&0&1&\cdots&t_{2i-1}\cr
\vdots&&\vdots&&\ddots\cr
0&\cdots&0&0&\cdots&1\cr
\end{pmat} \left(\begin{array}{c}\ast \\ \vdots\\\ast\\ b_{n-2i+1,n}\\0
\\ \vdots \\0\end{array}\right)=\left(\begin{array}{c}\ast \\ \vdots\\\ast\\ b_{n-2i+1,n}\\0
\\ \vdots \\0\end{array}\right).
\end{eqnarray*}
Then, $(S^{-1}BS)_{n}(j)=B_{n}(j), \quad j=n-2i+1,\cdots,n.$
\end{proof}

\begin{lemma}\label{lem:orbits}
Let $(\mathcal{M}_{n},p)$ be a $(\bfk[x],P)$-module and $n \geq 2$. If $\dep(B)=1$ and $b_{n-1,n} =b\neq 0$, then $b \notin \{0,-1,\cdots, -\frac{1}{n-2}\}$ and some $S \in G_{n}$ exists such that $(S^{-1}BS)_{n}=(0,\cdots,0,b,0)^{T}$, i.e.,
$$
S^{-1}BS = \mathcal{B} = \left(\begin{array}{ccccccc} 0&\frac{b}{1+(n-2)b}&0&\cdots&0&0&0 \\
0&0&\frac{b}{1+(n-3)b}&\cdots&0&0&0  \\
\vdots&\vdots&\vdots&\ddots&\vdots&\vdots&\vdots\\
0&0&0&\cdots&\frac{b}{1+2b}&0&0\\
0&0&0&\cdots&0&\frac{b}{1+b}&0\\
0&0&0&\cdots&0&0&b\\
0&0&0&\cdots&0&0&0
\end{array}\right).
$$
\end{lemma}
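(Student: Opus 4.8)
The plan is to read conjugation by $G_{n}$ as the freedom of changing the cyclic generator of the $\bfk[x]$-module $\mathcal M_{n}$, to translate the target matrix $\mathcal B$ into a single linear condition on that generator, and then to verify this condition by a rank count. First I would pin down the arithmetic obstruction. By Proposition~\ref{prop:expli} the first superdiagonal of $B$ satisfies Eq.~(\ref{equ:eq1}), i.e. $b_{i+1,i+2}=b_{i,i+1}(1+b_{i+1,i+2})$ for $1\le i\le n-2$. Starting from $b_{n-1,n}=b\neq 0$ and running $i$ downward: if $1+b_{i+1,i+2}$ were $0$ then $b_{i+1,i+2}=0$, a contradiction, so $1+b_{i+1,i+2}\neq 0$ throughout; an easy induction then gives $b\neq-\tfrac1m$ for $m=1,\dots,n-2$ (hence $b\notin\{0,-1,\dots,-\tfrac1{n-2}\}$) together with the formula $b_{i,i+1}=\tfrac{b}{1+(n-1-i)b}\neq 0$ of Eq.~(\ref{equ:for}) and Example~\ref{exam:ex}. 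So $\mathcal B$ is well defined. Moreover $\mathcal B$ is strictly upper triangular with nonzero entries only on the first superdiagonal, where they equal $\tfrac{b}{1+(n-1-i)b}$, its $n$-th column is $(0,\dots,0,b,0)^{T}$, and it satisfies Eq.~(\ref{equ:expli2}) (for $j=i+1$ this is the identity just used; for $j\ge i+2$ both sides vanish, since a nonzero product $b_{i,k}b_{k,j+1}$ for a bidiagonal matrix would force $k=i+1$ and $k=j$ simultaneously). By Lemma~\ref{lem:der}, $\mathcal B$ is therefore the unique matrix satisfying Eq.~(\ref{equ:expli2}) with that $n$-th column.

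Next I would reduce the claim to a kernel condition. For $S\in G_{n}$ the matrix $S^{-1}BS$ is the matrix of $p$ in the basis $\{S\varepsilon_{i}\}$, in which $\tau$ still has matrix $S^{-1}J_{n}S=J_{n}$; hence $S^{-1}BS$ again satisfies $J_{n}(S^{-1}BS)-(S^{-1}BS)J_{n}=(S^{-1}BS)^{2}$ and so, by Lemma~\ref{lem:der}, is determined by its $n$-th column. Thus it suffices to find $S\in G_{n}$ with $(S^{-1}BS)_{n}=b\varepsilon_{n-1}$, for then $S^{-1}BS=\mathcal B$ by the previous paragraph. Writing $w:=S\varepsilon_{n}=(w_{1},\dots,w_{n})^{T}$ and using $S=s_{0}I+\cdots+s_{n-1}J_{n}^{n-1}$, one has $s_{0}=w_{n}\neq 0$, $S=w_{n}I+w_{n-1}J_{n}+\cdots+w_{1}J_{n}^{n-1}$, and $(S^{-1}BS)_{n}=S^{-1}Bw$. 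Since $S$ commutes with $J_{n}$ and $\varepsilon_{n-1}=J_{n}\varepsilon_{n}$, the equation $S^{-1}Bw=b\varepsilon_{n-1}$ is equivalent to $Bw=bJ_{n}w$. So the whole lemma comes down to producing a $w\in\ker(B-bJ_{n})$ with $w_{n}\neq 0$.

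Finally I would settle this by a rank count on $N:=B-bJ_{n}$, which is strictly upper triangular. Its superdiagonal entry is $N_{i,i+1}=b_{i,i+1}-b=\tfrac{-(n-1-i)b^{2}}{1+(n-1-i)b}$, nonzero for $1\le i\le n-2$ and zero for $i=n-1$, while its $n$-th column equals $(b_{1n},\dots,b_{n-2,n},0,0)^{T}\in\operatorname{span}\{\varepsilon_{1},\dots,\varepsilon_{n-2}\}$. For $2\le i\le n-1$ the $i$-th column of $N$ lies in $\operatorname{span}\{\varepsilon_{1},\dots,\varepsilon_{i-1}\}$ and its $(i-1)$-st coordinate is $N_{i-1,i}\neq 0$; hence these $n-2$ columns have their bottom-most nonzero entries in the distinct rows $1,\dots,n-2$, are linearly independent, and therefore span $\operatorname{span}\{\varepsilon_{1},\dots,\varepsilon_{n-2}\}$. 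Writing the $n$-th column of $N$ as $\sum_{i=2}^{n-1}c_{i}\,N\varepsilon_{i}$, the vector $w:=\varepsilon_{n}-\sum_{i=2}^{n-1}c_{i}\varepsilon_{i}$ satisfies $Nw=0$ and $w_{n}=1$, and $S:=I+w_{n-1}J_{n}+\cdots+w_{1}J_{n}^{n-1}\in G_{n}$ does the job. The only genuine work is the bookkeeping in this last step — in particular being sure that $N\varepsilon_{2},\dots,N\varepsilon_{n-1}$ really span all of $\operatorname{span}\{\varepsilon_{1},\dots,\varepsilon_{n-2}\}$, which is exactly where the non-vanishing $b\neq-\tfrac1m$ from the first step is needed; everything else is the dictionary between $G_{n}$-conjugation and the choice of cyclic generator, together with the uniqueness statement of Lemma~\ref{lem:der}.
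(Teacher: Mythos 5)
Your proof is correct, but it takes a genuinely different route from the paper's. The paper proves the normal form by induction on $n$: it peels off the first basis vector, applies the inductive hypothesis to the lower-right $(n-1)\times(n-1)$ block via an extended conjugation $T$, uses the relation $J_n(T^{-1}BT)-(T^{-1}BT)J_n=(T^{-1}BT)^2$ to pin down the remaining first-row entries ($x_1=\tfrac{b}{1+(n-2)b}$, $x_2=\cdots=x_{n-2}=0$), and then kills the last stray entry $x_{n-1}$ with one more conjugation by $Q=I+\alpha J_n^{\,n-2}$. You instead observe that, since every $S\in G_n$ commutes with $J_n$ and both $S^{-1}BS$ and $\mathcal B$ are strictly upper triangular solutions of Eq.~(\ref{equ:expli2}), Lemma~\ref{lem:der} reduces the whole lemma to matching $n$-th columns; the condition $(S^{-1}BS)_n=b\varepsilon_{n-1}$ then translates, via $w=S\varepsilon_n$ and $\varepsilon_{n-1}=J_n\varepsilon_n$, into finding $w\in\ker(B-bJ_n)$ with $w_n\neq 0$, which your triangular rank count on $N=B-bJ_n$ (superdiagonal entries $\tfrac{-(n-1-i)b^2}{1+(n-1-i)b}\neq 0$ for $i\le n-2$, $n$-th column supported in rows $1,\dots,n-2$) delivers directly. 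Each approach has its merits: the paper's induction stays entirely inside explicit matrix computations and produces the conjugating matrix step by step, while yours is non-inductive, explains conceptually why only the $(n-1,n)$ entry of the last column can survive, and cleanly isolates where the hypothesis $b\notin\{-1,\dots,-\tfrac1{n-2}\}$ enters (the invertibility of the superdiagonal of $N$). One small point to make explicit: Lemma~\ref{lem:der} is stated for matrices of $(\bfk[x],P)$-module structures, so you should either note that a strictly upper triangular matrix satisfying Eq.~(\ref{equ:expli1})--(\ref{equ:expli2}) automatically satisfies $J_nX-XJ_n=X^2$ (so $\mathcal B$ is such a matrix), or that the proof of Lemma~\ref{lem:der} only uses those recursions; this is a one-line remark, not a gap.
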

\begin{proof}
The first statement follows immediately from Proposition \ref{prop:expli}. We prove the second statement by induction on $n \geq 2$. The case where $n=2$ is obvious. Assume that the statement holds for any $n-1 \geq 2$ and consider the case for $n$. $B$ is strictly upper triangular and $J_{n}B-BJ_{n}=B^{2}$, so we have
$$
B = \begin{pmat}({.|...})
0&&&\ast&\ast \cr\-
0&&&& \cr
\vdots&&&\widehat{B}& \cr
0& & &&\cr
\end{pmat},
$$
where $\widehat{B} \in {\rm M}^{n-1}(\bfk)$ and thus $J_{n-1}\widehat{B}-\widehat{B}J_{n-1}=\widehat{B}^{2}$. By the induction hypothesis, some $\widehat{S} \in G_{n-1}$ exists such that
$$
\widehat{S}^{-1}\widehat{B}~\widehat{S} = \widehat{\mathcal{B}} = \left(\begin{array}{cccccc}
0&\frac{b}{1+(n-3)b}&\cdots&0&0&0  \\
\vdots&\vdots&\ddots&\vdots&\vdots&\vdots\\
0&0&\cdots&\frac{b}{1+2b}&0&0\\
0&0&\cdots&0&\frac{b}{1+b}&0\\
0&0&\cdots&0&0&b\\
0&0&\cdots&0&0&0
\end{array}\right),
$$
and $\widehat{S} = I_{n-1}+s_{1}J_{n-1}+\cdots + s_{n-2}(J_{n-1})^{n-2}$ for some $s_{1}, s_{2},\cdots, s_{n-2} \in \bfk$. Let
$$
T=  \begin{pmat}({.|...})
1&&s_{1}&\cdots&s_{n-1} \cr\-
0&&&& \cr
\vdots&&&\widehat{S}& \cr
0& & &&\cr
\end{pmat} = \left(\begin{array}{cccccc} 1&s_{1}&\cdots&s_{n-3}&s_{n-2}&s_{n-1} \\
0&1&\cdots&s_{n-4}&s_{n-3}&s_{n-2} \\
\vdots&\vdots&\ddots&\vdots&\vdots&\vdots\\
0&0&\cdots&1&s_{1}&s_{2}\\
0&0&\cdots&0&1&s_{1}\\
0&0&\cdots&0&0&1
\end{array}\right).
$$
Then,
$$
T^{-1} B T =  \begin{pmat}({.|...})
1&&\ast&\cdots&\ast \cr\-
0&&&& \cr
\vdots&&&\widehat{S}^{-1}& \cr
0& & &&\cr
\end{pmat}\begin{pmat}({.|...})
0&&\ast&\cdots&\ast \cr\-
0&&&& \cr
\vdots&&&\widehat{B}& \cr
0& & &&\cr
\end{pmat}\begin{pmat}({.|...})
1&&\ast&\cdots&\ast \cr\-
0&&&& \cr
\vdots&&&\tilde{S}& \cr
0& & &&\cr
\end{pmat} = \begin{pmat}({.|...})
0&&x_{1}&\cdots&x_{n-1} \cr\-
0&&&& \cr
\vdots&&&\widehat{S}^{-1}\widehat{B}~\widehat{S}& \cr
0& & &&\cr
\end{pmat},
$$
for some $x_{1}, \cdots, x_{n-1} \in \bfk$.
$T^{-1}BT$ still satisfies
 \begin{equation*}
J_{n}(T^{-1}BT)-(T^{-1}BT)J_{n}  = (T^{-1}BT)^{2},
\end{equation*}
so we have
$$
\frac{b}{1+(n-3)b} -x_{1} = \frac{b}{1+(n-3)b}x_{1},
$$
$$
\left(\frac{b}{1+(n-i-2)b}+1\right)x_{i}=0, \quad  2 \leq i \leq n-2.
$$

Then, $x_{1}=\frac{b}{1+(n-2)b}$ and $x_{2}=\cdots =x_{n-2}=0$. Let
$$
Q = \left(\begin{array}{ccccc} 1&0&\cdots&\alpha&0 \\
0&1&\cdots&0&\alpha \\
\vdots&\vdots&\ddots&\vdots&\vdots\\
0&0&\cdots&1&0\\
0&0&\cdots&0&1
\end{array}\right) \in G_{n}, \quad Q^{-1} = \left(\begin{array}{ccccc} 1&0&\cdots&-\alpha&0 \\
0&1&\cdots&0&-\alpha \\
\vdots&\vdots&\ddots&\vdots&\vdots\\
0&0&\cdots&1&0\\
0&0&\cdots&0&1
\end{array}\right),
$$
where $\alpha= \frac{(1+(n-2)b)x_{n-1}}{(n-2)b^{2}}$, and then we have
$$
Q^{-1} (T^{-1}BT) Q = \mathcal{B}.
$$
If we let $S=TQ$,then we obtain the result required.
\end{proof}

\begin{lemma}\label{lem:mid1}
Let $(\mathcal{M}_{n},p)$ be a $(\bfk[x],P)$-module and $n \geq 5$. If $\dep(B)=i$ and $2 \leq i \leq [\frac{n-1}{2}]$, then some $S \in G_{n}$ exists such that $
\mathcal{B}_{n}=(S^{-1}BS)_{n}=(0,\cdots,0,b_{n-2i+1,n},\cdots,b_{n-i,n},0,\cdots,0)^{T}.$
Let $b_{n-i,n}=b \neq 0$ and $b_{n-2i+1,n}=c$, and then
$$
S^{-1}BS = \mathcal{B} = b_{n-i,n}J_{n}^{i}+ \cdots + b_{n-2i+2,n}J_{n}^{2i-2} + \left(\begin{array}{ccccccc}
0&\cdots&c-(n-2i)b^{2}&\ast&\cdots&\ast&0 \\
0& \cdots&0&\ast&\cdots&\ast&0  \\
0& \cdots&\vdots&\vdots&\ddots&\vdots&\vdots\\
0& \cdots&0&0&\cdots&c-b^{2}&0\\
0& \cdots&0&0&\cdots&0&c\\
0& \cdots&0&0&\cdots&0&0\\
0& \cdots&\vdots&\vdots&\cdots&\vdots&\vdots\\
0& \cdots&0&0&\cdots&0&0
\end{array}\right).
$$
\end{lemma}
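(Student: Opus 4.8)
The plan is to follow the strategy of the proof of Lemma~\ref{lem:orbits}: induct on $n$ by peeling off the first row, but first split off the part of $B$ that lies in the centre and so is invisible to conjugation. Since $n\geq 5$ and $2\leq i\leq[\frac{n-1}{2}]$, Proposition~\ref{prop:mid} applies, so the diagonals of $B$ at distances $i,\dots,2i-2$ from the main diagonal are constant, and we may write $B=\star+C$, where $\star=b_{n-i,n}J_n^{i}+b_{n-i-1,n}J_n^{i+1}+\cdots+b_{n-2i+2,n}J_n^{2i-2}\in Z(G_n)$ and $C=B-\star$ is strictly upper triangular with $C_{s,t}=0$ for every $t-s\leq 2i-2$ (combine Propositions~\ref{prop:zero} and~\ref{prop:mid}). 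For any $S\in G_n$ we then have $S^{-1}BS=\star+S^{-1}CS$, and by Lemma~\ref{lem:pre1}(2) the entries in rows $n-2i+1,\dots,n$ of the last column of $S^{-1}BS$ agree with those of $B$, namely $b_{n-2i+1,n},\dots,b_{n-i,n}$ in rows $n-2i+1$ through $n-i$ and zeros below. Hence it remains to produce $S\in G_n$ annihilating rows $1,\dots,n-2i$ of the last column, i.e.\ forcing $b_{1n}=\cdots=b_{n-2i,n}=0$; once the last column has the asserted shape, Lemma~\ref{lem:der} together with the recursion~(\ref{equ:expli2}) determines the whole matrix, and since on the distance-$(2i-1)$ diagonal the only surviving summand in~(\ref{equ:expli2}) is the product of two distance-$i$ entries, equal to $b^{2}$, that diagonal is forced to be $c-(n-2i)b^{2},\dots,c-b^{2},c$, which is precisely the displayed form of $\mathcal{B}$.

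For the inductive step, note that the first column of $B$ vanishes and let $\widehat B\in{\rm M}^{n-1}(\bfk)$ be the lower $(n-1)\times(n-1)$ block of $B$. As in Lemma~\ref{lem:orbits}, $J_nB-BJ_n=B^{2}$ forces $J_{n-1}\widehat B-\widehat B J_{n-1}=\widehat B^{2}$, so $\widehat B$ is again a Rota--Baxter matrix; since $b_{n-i,n}\neq 0$ and $n-i-1\geq 1$, we have $\dep(\widehat B)=i$ and $\widehat B\neq 0$. If $i\leq[\frac{n-2}{2}]$ (so that $n-1\geq 5$), the induction hypothesis applied to $\widehat B$ gives an upper triangular Toeplitz $\widehat S\in G_{n-1}$ with $\widehat S^{-1}\widehat B\widehat S=\widehat{\mathcal B}$ of the asserted shape in dimension $n-1$. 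If instead $i>[\frac{n-2}{2}]$---this occurs only when $n$ is odd and $i=\frac{n-1}{2}$, and in particular in the base case $n=5$---then $2i\geq n-1$, Proposition~\ref{prop:center} shows $\widehat B$ is already a polynomial in $J_{n-1}$, hence lies in $Z(G_{n-1})$ and equals $\widehat{\mathcal B}$, and we take $\widehat S=I_{n-1}$. Extending $\widehat S$ to the Toeplitz matrix $T\in G_n$ whose lower $(n-1)$-block is $\widehat S$, a block multiplication shows $T^{-1}BT$ has vanishing first column, lower $(n-1)$-block $\widehat{\mathcal B}$, and first row $(0,x_1,\dots,x_{n-1})$ for some $x_j\in\bfk$.

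Now impose $J_n(T^{-1}BT)-(T^{-1}BT)J_n=(T^{-1}BT)^{2}$ and compare first rows, exactly as in Lemma~\ref{lem:orbits}. A product $(T^{-1}BT)_{1,k}(T^{-1}BT)_{k,t}$ is nonzero only if both $k-1\geq i$ and $t-k\geq i$, so on the first row the quadratic right-hand side vanishes except in one equation at distance $2i-1$, which pins down $x_{2i-1}=c-(n-2i)b^{2}$; the remaining equations then pin down $x_1,\dots,x_{n-2}$ (those at distances $<i$ to $0$, those at distances $i,\dots,2i-2$ to the constant $\star$-values), leaving only the corner $x_{n-1}=(T^{-1}BT)_{1,n}$ free. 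To remove it, conjugate by $Q=I+\alpha J_n^{n-2i}\in G_n$: a short computation using the band constraint $C_{s,t}=0$ for $t-s\leq 2i-2$ shows that rows $2,\dots,n-2i$ of the last column stay zero and rows $n-2i+1,\dots,n$ are unchanged (Lemma~\ref{lem:pre1}(2)), while $(T^{-1}BT)_{1,n}$ becomes a polynomial in $\alpha$ with linear coefficient $-(n-2i)b^{2}\neq 0$, which therefore has a root $\alpha\in\bfk$ since $\bfk$ is algebraically closed. Setting $S=TQ$ gives the required matrix, and the displayed form of $\mathcal{B}=S^{-1}BS$ then follows from Lemma~\ref{lem:der}. (Unlike in Lemma~\ref{lem:orbits}, no value of $b$ needs to be excluded: for $i\geq 2$ the distance-$1$ entries of $B$ already vanish by Proposition~\ref{prop:zero}.)

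The step I expect to be the main obstacle is the first-row analysis above---showing that the quadratic relation $J_nM-MJ_n=M^{2}$, restricted to the short first row, determines every $x_j$ except the corner, which rests on using $C_{s,t}=0$ for $t-s\leq 2i-2$ to rule out all but the isolated distance-$(2i-1)$ contribution---together with the accompanying verification that the final conjugation by $Q$ neither perturbs the invariant block $\widehat{\mathcal B}$ nor reintroduces nonzero entries in rows $1,\dots,n-2i-1$ of the last column. The extra case distinction on whether $\dep(\widehat B)$ lies inside the range of the induction hypothesis, handled by Proposition~\ref{prop:center} otherwise, is a minor point that does not arise in the $\dep(B)=1$ case of Lemma~\ref{lem:orbits}.
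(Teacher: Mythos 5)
Your proposal is correct and follows essentially the same route as the paper: the same induction peeling off the first row, extending $\widehat S\in G_{n-1}$ to a Toeplitz $T\in G_n$, splitting off the central part $\star\in Z(G_n)$, and killing the corner entry by conjugating with $Q=I+\alpha J_n^{\,n-2i}$ via the identity $b_{n-2i+1,n}-b_{1,2i}=(n-2i)b^{2}$ (the paper even observes the dependence on $\alpha$ is linear, so algebraic closedness is not needed there). Your explicit treatment of the boundary case $i>[\frac{n-2}{2}]$ (in particular $n$ odd, $i=\frac{n-1}{2}$, and the base case $n=5$) via Proposition \ref{prop:center} is a welcome refinement, since the paper's inductive step invokes the induction hypothesis without noting that $\dep(\widehat B)=i$ can fall outside its stated range.
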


\begin{proof}
We prove the statement by induction on $n \geq 5$. When $n=5$ and now $i=2$, we have
$$
B=\left(\begin{array}{ccccc}0&0&b_{35}&b_{25}-b_{35}^{2}&b_{15}\\ 0&0&0&b_{35}&b_{25}\\0&0&0&0&b_{35}\\
0&0&0&0&0\\
0&0&0&0&0\end{array}\right).
$$
Let
$$
S= I + \alpha J_{n}^{n-2i} = I +\alpha J_{5} = \left(\begin{array}{ccccc}1&\alpha&0&0&0\\ 0&1&\alpha&0&0\\0&0&1&\alpha&0\\
0&0&0&1&\alpha\\
0&0&0&0&1\end{array}\right)
$$
and then
$$
(S^{-1}BS)_{5}(1)= \alpha (b_{25}-b_{35}^{2})+b_{15}-\alpha b_{25}-\alpha^{2}b_{35}+\alpha^{2}b_{35}=b_{15}-b_{35}^{2}\alpha.
$$
Let $\alpha =\frac{b_{15}}{b_{35}^{2}}$. By Lemma \ref{lem:pre1}, we have
$$
(S^{-1}BS)_{5} = (0,b_{25},b_{35},0,0)^{T}
$$
as required.

Assume that the statement holds for any $n-1 \geq 5$ and consider the case for $n$. $B$ is strictly upper triangular and $J_{n}B-BJ_{n}=B^{2}$, so we have
$$
B = \begin{pmat}({.|...})
0&&\ast&\cdots&\ast \cr\-
0&&&& \cr
\vdots&&&\widehat{B}& \cr
0& & &&\cr
\end{pmat},
$$
where $\widehat{B} \in {\rm M}^{n-1}(\bfk)$ and thus $J_{n-1}\widehat{B}-\widehat{B}J_{n-1}=\widehat{B}^{2}$. By the induction hypothesis, some $\widehat{S} \in G_{n-1}$ exists such that
$$
\widehat{S}^{-1}\widehat{B}~\widehat{S} = \widehat{\mathcal{B}},
$$
and $\widehat{S} = I_{n-1}+s_{1}J_{n-1}+\cdots + s_{n-2}(J_{n-1})^{n-2}$ for some $s_{1}, s_{2},\cdots, s_{n-2} \in \bfk$. Let
$$
T=  \begin{pmat}({.|...})
1&&s_{1}&\cdots&s_{n-1} \cr\-
0&&&& \cr
\vdots&&&\widehat{S}& \cr
0& & &&\cr
\end{pmat} = \left(\begin{array}{cccccc} 1&s_{1}&\cdots&s_{n-3}&s_{n-2}&s_{n-1} \\
0&1&\cdots&s_{n-4}&s_{n-3}&s_{n-2} \\
\vdots&\vdots&\ddots&\vdots&\vdots&\vdots\\
0&0&\cdots&1&s_{1}&s_{2}\\
0&0&\cdots&0&1&s_{1}\\
0&0&\cdots&0&0&1
\end{array}\right),
$$
and we have
\begin{eqnarray*}
T^{-1} BT &=&  \begin{pmat}({.|...})
1&&\ast&\cdots&\ast \cr\-
0&&&& \cr
\vdots&&&\widehat{S}^{-1}& \cr
0& & &&\cr
\end{pmat}\begin{pmat}({.|...})
0&&\ast&\cdots&\ast \cr\-
0&&&& \cr
\vdots&&&\widehat{B}& \cr
0& & &&\cr
\end{pmat}\begin{pmat}({.|...})
1&&\ast&\cdots&\ast \cr\-
0&&&& \cr
\vdots&&&\widehat{S}& \cr
0& & &&\cr
\end{pmat} \\
&=& \begin{pmat}({.|...})
0&& \xi \cr\-
0_{(n-1)\times 1}&&\widehat{S}^{-1}\widehat{B}~\widehat{S} \cr
\end{pmat},
\end{eqnarray*}
where $\xi = (\ast,\cdots,\ast, x_{1,n})_{1 \times(n-1)} \in \bfk^{n-1}$.
By Proposition \ref{prop:mid} and if we let $\star = b_{n-i,n}J_{n}^{i}+ \cdots + b_{n-2i+2,n}J_{n}^{2i-2} \in Z(G_{n})$, then we have
$$
T^{-1}BT = \star+ (T^{-1}BT-\star).
$$
From the induction hypothesis and
\begin{equation*}
J_{n}(T^{-1}BT)-(T^{-1}BT)J_{n}  = (T^{-1}BT)^{2},
\end{equation*}
it follows that
$$
(T^{-1}BT-\star) = \left(\begin{array}{ccccccc}
0&\cdots&b_{1,2i}&\ast&\cdots&\ast&x_{1n} \\
0& \cdots&0&\ast&\cdots&\ast&0  \\
0& \cdots&\vdots&\vdots&\ddots&\vdots&\vdots\\
0& \cdots&0&0&\cdots&\ast&0\\
0& \cdots&0&0&\cdots&0&b_{n-2i+1,n}\\
0& \cdots&0&0&\cdots&0&0\\
0& \cdots&\vdots&\vdots&\cdots&\vdots&\vdots\\
0& \cdots&0&0&\cdots&0&0
\end{array}\right).
$$
Let
$$
Q = I_{n} + \alpha (J_{n})^{n-2i}  \in G_{n}.
$$
Then,
$$
Q^{-1}(T^{-1} B T)Q = \star + Q^{-1}(T^{-1}BT-\star)Q,
$$
\begin{eqnarray*}
(Q^{-1}(T^{-1} B T)Q)_{n}(1) &=& (Q^{-1}(T^{-1}BT-\star)Q)_{n}(1)\\
&=& \alpha (b_{1,2i}-b_{n-2i+1,n})+x_{1n}.
\end{eqnarray*}
In addition, by Eq. (\ref{equ:eq2}), we have
\begin{eqnarray*}
b_{1,i+1}=\cdots = b_{n-i,n}, \quad b_{s+1,s+2i}-b_{s,s+2i-1}=b_{s,s+i}b_{s+i,s+2i}, \quad 1\leq s \leq n-2i.
\end{eqnarray*}
Then,
$$
b_{n-2i+1,n}-b_{1,2i}= (n-2i)b_{n-i,n}^{2}.
$$
Let $\alpha = \frac{x_{1n}}{(n-2i)b_{n-i,n}^{2}}$ and $S=TQ$. Then,
$$
(S^{-1}BS)_{n} =(0,\cdots,0,b_{n-2i+1,n},\cdots,b_{n-i,n},0,\cdots,0)^{T},
$$
which completes the induction.
\end{proof}

\begin{defn}
{\rm Let $n \geq 2$ and a $(\bfk[x],P)$-module $(\mathcal{M}_{n},p)$ is called canonical if $p$ satisfies one of the following conditions:
\begin{enumerate}
\item $p=0$;
\item $\dep(p)=1$ and $\Psi(p) =(0,0,\cdots,0,b_{n-1,n},0)^{T}$;
\item $2\leq\dep(p)=i \leq [\frac{n-1}{2}]$ and $\Psi(p) =(0,\cdots,0, b_{n-2i+1,n},\cdots,b_{n-i,n},0,\cdots,0)^{T}$;
\item $\max\{[\frac{n-1}{2}],1\}< \dep(p)=i \leq n-1$ and $\Psi(p) =(b_{1n},\cdots,b_{n-i,n},0,\cdots,0)^{T}$.
\end{enumerate}
}
\end{defn}

\begin{theorem}
There is only one $1$-dimensional $(\bfk[x],P)$-module $(\mathcal{M}_{1},0)$. For $n \geq 2$, each $(\bfk[x],P)$-module $(\mathcal{M}_{n},p)$ is isomorphic to a canonical $(\bfk[x],P)$-module.
\end{theorem}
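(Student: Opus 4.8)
The plan is to reduce the theorem to the structural results already assembled by performing a case analysis on the depth $i=\dep(p)$, so that almost no new computation is required. The case $n=1$ is immediate: by Corollary~\ref{coro:onedim} every $1$-dimensional $(\bfk[x],P)$-module has $p=0$, so its only Rota--Baxter structure is $(\mathcal{M}_{1},0)$.

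For $n\ge 2$, fix $(\mathcal{M}_{n},p)$ and let $B$ be the matrix of $p$ in the standard basis. I would first observe that a change of basis by any $S\in G_{n}$ is an isomorphism of $(\bfk[x],P)$-modules: since $S^{-1}J_{n}S=J_{n}$, such an $S$ is a $\bfk[x]$-module automorphism of $\mathcal{M}_{n}$, and since
$$
J_{n}(S^{-1}BS)-(S^{-1}BS)J_{n}=S^{-1}(J_{n}B-BJ_{n})S=S^{-1}B^{2}S=(S^{-1}BS)^{2},
$$
the conjugated matrix again satisfies Eq.~(\ref{equ:main}); thus $(\mathcal{M}_{n},p)\cong(\mathcal{M}_{n},p')$ whenever $p'$ has matrix $S^{-1}BS$. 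Moreover, by Lemma~\ref{lem:der} a $(\bfk[x],P)$-module structure on $\mathcal{M}_{n}$ is completely determined by the $n$-th column $\Psi(p)$ of $B$, so it suffices to find $S\in G_{n}$ for which $(S^{-1}BS)_{n}$ has one of the four canonical shapes.

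Then I would split according to $i=\dep(p)\in\{1,\dots,n\}$. If $i=n$, then $p=0$ and $(\mathcal{M}_{n},p)$ is already canonical of type~(1). If $i=1$, Lemma~\ref{lem:orbits} provides $S\in G_{n}$ with $(S^{-1}BS)_{n}=(0,\dots,0,b_{n-1,n},0)^{T}$, which is type~(2). If $2\le i\le[\frac{n-1}{2}]$, then necessarily $n\ge 2i+1\ge 5$, so Lemma~\ref{lem:mid1} applies and yields $S\in G_{n}$ with $(S^{-1}BS)_{n}=(0,\dots,0,b_{n-2i+1,n},\dots,b_{n-i,n},0,\dots,0)^{T}$, that is, type~(3). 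Finally, if $\max\{[\frac{n-1}{2}],1\}<i\le n-1$, then by the definition of depth the column $\Psi(p)$ already has the form $(b_{1n},\dots,b_{n-i,n},0,\dots,0)^{T}$, so $(\mathcal{M}_{n},p)$ is itself canonical of type~(4) (Proposition~\ref{prop:center} moreover tells us that $B$ is in this case a polynomial in $J_{n}$, hence central in $G_{n}$). It remains only to check that the index sets $\{n\}$, $\{1\}$, $\{2,\dots,[\frac{n-1}{2}]\}$ and $\{[\frac{n-1}{2}]+1,\dots,n-1\}$ cover $\{1,\dots,n\}$ for every $n\ge 2$, which is a quick verification (the boundary values $n=2,3,4$ being the only ones to inspect by hand).

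Since all the substantive work is carried by Lemmas~\ref{lem:orbits} and~\ref{lem:mid1} together with Propositions~\ref{prop:expli}, \ref{prop:zero} and~\ref{prop:center}, I do not expect a genuine obstacle here; the only points that need care are confirming that the hypothesis $2\le i\le[\frac{n-1}{2}]$ of Lemma~\ref{lem:mid1} forces $n\ge 5$, and that the four depth ranges in the definition of ``canonical'' partition $\{1,\dots,n\}$ without overlap or gap.
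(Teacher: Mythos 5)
Your proposal is correct and follows essentially the same route as the paper, whose proof is simply a citation of Corollary~\ref{coro:onedim}, Proposition~\ref{prop:center}, Lemma~\ref{lem:pre1}, Lemma~\ref{lem:orbits}, and Lemma~\ref{lem:mid1}; your case analysis by depth, the check that conjugation by $S\in G_{n}$ preserves the $(\bfk[x],P)$-module structure, and the verification that the depth ranges cover $\{1,\dots,n\}$ just make explicit what the paper leaves implicit.
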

\begin{proof}
The statement follows immediately from Corollary \ref{coro:onedim}, Proposition \ref{prop:center}, Lemma \ref{lem:pre1}, Lemma \ref{lem:orbits}, and Lemma \ref{lem:mid1}.
\end{proof}

\begin{exam}
We now list some low-dimensional canonical $(\bfk[x],P)$-modules: for the given $n \geq 2$, let $b \notin \{0,-1,\cdots, -\frac{1}{n-2} \}$ and $c \neq 0$.
\begin{enumerate}
\item $n=2$: $\Psi(p)=\left(\begin{array}{c} b\\0 \end{array}\right)$; $p=0$.
\item $n=3$: $\Psi(p)=\left(\begin{array}{c} 0\\b\\0 \end{array}\right)$; $\Psi(p)=\left(\begin{array}{c} c\\0 \\0 \end{array}\right)$; $p=0$.
\item $n=4$: $\Psi(p)=\left(\begin{array}{c} 0\\0 \\b\\0 \end{array}\right)$; $\Psi(p)=\left(\begin{array}{c} d\\c \\0\\0 \end{array}\right)$; $\Psi(p)=\left(\begin{array}{c} c\\0 \\0\\0 \end{array}\right)$; $p=0$.
\item $n=5$: $\Psi(p)=\left(\begin{array}{c} 0\\0 \\0\\b \\0 \end{array}\right)$; $\Psi(p)=\left(\begin{array}{c} 0\\d \\c\\0 \\0 \end{array}\right)$; $\Psi(p)=\left(\begin{array}{c} d\\c \\0\\0 \\0 \end{array}\right)$; $\Psi(p)=\left(\begin{array}{c} c\\0 \\0\\0 \\0 \end{array}\right)$; $p=0$.
\end{enumerate}
\end{exam}

\subsection{Discussion of the general cases}
Let $(M,p)$ be an $n$-dimensional $(\bfk[x],P)$-module. Now, we assume that $\tau$ has exactly one eigenvalue $0$ and
$$
M=\mathcal{M}_{r_{1}}\oplus \cdots \oplus \mathcal{M}_{r_{s}}, \quad r_{1} +\cdots +r_{s}=n.
$$
We characterized the $(\bfk[x],P)$-modules $(M_{r_{j}},p_{j})$, $1 \leq j \leq s$ in Subsection \ref{subsec:class}. If we let $p=\sum_{j=1}^{s}p_{j}$, then $(M,p)$ is an $n$-dimensional $(\bfk[x],P)$-module. However, these are not the only $n$-dimensional $(\bfk[x],P)$-modules. In fact, Proposition \ref{prop:gen} implies that some basis $v_{1},\cdots ,v_{n}$ of $M$ exists such that each $(V_{i},p_{i})$, $i =1, \cdots, n$, is an $i$-dimensional $(\bfk[x],P)$-module. In general, unfortunately, if we write $M=\mathcal{M}_{r_{1}}\oplus \cdots \oplus \mathcal{M}_{r_{s}}$, then we do not have
\begin{eqnarray*}
p(\mathcal{M}_{r_{j}}) &\subseteq& \mathcal{M}_{r_{j}},\\
p(\mathcal{M}_{r_{1}} \oplus \cdots \oplus \mathcal{M}_{r_{j}}) &\subseteq & \mathcal{M}_{r_{1}} \oplus \cdots \oplus \mathcal{M}_{r_{j}}, \quad 1 \leq j \leq s.
\end{eqnarray*}
For example, let
$M=\mathcal{M}_{1}\oplus \mathcal{M}_{2}$ and then $A=\diag\{J_{1},J_{2}\}$. Let the matrix of $p$ be
$$
B=\left(\begin{array}{ccc} 1&0&1 \\ 0&0&1 \\ -1&0&-1 \end{array}\right)
$$
and then $(M,p)$ is a $(\bfk[x],P)$-module. Obviously, $B$ is not quasi-diagonal and not quasi-upper triangular. Classifying $(M,p)$ is generally  complicated. We conclude this section only by giving the classification of $(M,p)$ when $s=n$.

\begin{prop}\label{prop:squ}
Let $M=\mathcal{M}_{r_{1}}\oplus \cdots \oplus \mathcal{M}_{r_{s}}$ be an $n$-dimensional $\bfk[x]$-module and $p \in {\rm End}_{\bfk}(M)$. If $s=n$, then $(M,p)$ is a $(\bfk[x],P)$-module if and only if $p^{2}=0$.
\end{prop}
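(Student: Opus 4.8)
The plan is to reduce the statement to the defining identity recorded in Corollary~\ref{thm:sim}, after first observing that the hypothesis $s=n$ pins down the $\bfk[x]$-module structure of $M$ completely. First I would note that each summand $\mathcal{M}_{r_j}$ has dimension $r_j\geq 1$, so the constraint $r_1+\cdots+r_s=n$ together with $s=n$ forces $r_1=\cdots=r_s=1$. Hence $M=\mathcal{M}_1\oplus\cdots\oplus\mathcal{M}_1$ ($n$ copies), and since $\mathcal{M}_1=\bfk[x]/(x)$ carries the zero action of $x$, the endomorphism $\tau$ induced by the action of $x$ on $M$ is the zero map; equivalently, in the standard bases the matrix $A$ of $\tau$ is $\diag\{J_1,\ldots,J_1\}=0$.

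Next I would invoke Corollary~\ref{thm:sim}: for a $\bfk[x]$-module $M$ and $p\in\End_\bfk(M)$, the pair $(M,p)$ is a $(\bfk[x],P)$-module if and only if $xp-px=p^2$, i.e.\ $\tau p-p\tau=p^2$ in the notation of Subsection~\ref{decom}. Substituting $\tau=0$ collapses the left-hand side to $0$, so Eq.~(\ref{equ:main}) becomes simply $p^2=0$. This yields both implications simultaneously: any $p$ with $p^2=0$ makes $(M,p)$ a Rota--Baxter module, and conversely every Rota--Baxter module structure on this particular $M$ forces $p^2=0$. One can also phrase this purely matrix-theoretically: $AB-BA=B^2$ with $A=0$ reduces to $B^2=0$.

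There is essentially no obstacle here; the only point deserving a moment's care is the verification that $s=n$ genuinely forces $\tau=0$ (not merely $\tau$ nilpotent), which is immediate from $r_1+\cdots+r_s=n$ with all $r_j\geq 1$. For completeness I would also remark that $p^2=0$ is trivially consistent with Corollary~\ref{thm:nil}, since a square-zero operator is nilpotent, but no further argument is needed beyond the substitution $\tau=0$ into Corollary~\ref{thm:sim}.
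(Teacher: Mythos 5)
Your proposal is correct and follows essentially the same route as the paper: observe that $s=n$ forces all $r_j=1$, so $\tau=0$, and then Corollary \ref{thm:sim} reduces the Rota--Baxter condition $\tau p-p\tau=p^{2}$ to $p^{2}=0$. The extra remarks (consistency with Corollary \ref{thm:nil}, the matrix formulation) are harmless but not needed.
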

\begin{proof}
$M=\mathcal{M}_{1}^{\oplus n} \cong \bfk^{\oplus n}$,  so $\tau$ is the zero map. By Theorem \ref{thm:sim}, $(M,p)$ is a $(\bfk[x],P)$-module if and only if
$$
0p-p0=p^{2},
$$
i.e., $p^{2}=0$.
\end{proof}

\begin{coro}
Let $M_{2}=\bfk \oplus \bfk$ and $p_{2}$ be the $\bfk$-linear map defined by $p_{2}(1_{\bfk},0)=(0,0), p_{2}(0,1_{\bfk})=(1_{\bfk},0)$. Then, $(M_{2},p_{2})$ is a $(\bfk[x],P)$-module.
\end{coro}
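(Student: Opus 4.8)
The plan is to recognize this corollary as an immediate instance of Proposition \ref{prop:squ}. First I would observe that $M_{2} = \bfk \oplus \bfk$ is precisely $\mathcal{M}_{1} \oplus \mathcal{M}_{1}$ as a $\bfk[x]$-module: each summand $\bfk$ is $\bfk[x]/(x)$, so the action of $x$ on $M_{2}$ is the zero map. In the notation of Proposition \ref{prop:squ} this says $n = 2$, $r_{1} = r_{2} = 1$, and hence $s = 2 = n$, so the proposition applies verbatim to $M_{2}$.

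Next I would write down the matrix $B$ of $p_{2}$ with respect to the basis $(1_{\bfk}, 0), (0, 1_{\bfk})$. From $p_{2}(1_{\bfk},0) = (0,0)$ and $p_{2}(0,1_{\bfk}) = (1_{\bfk},0)$ one reads off
$$
B = \left(\begin{array}{cc} 0 & 1 \\ 0 & 0 \end{array}\right),
$$
and a single matrix multiplication gives $B^{2} = 0$, i.e.\ $p_{2}^{2} = 0$. Invoking Proposition \ref{prop:squ} with $s = n$ then yields at once that $(M_{2}, p_{2})$ is a $(\bfk[x], P)$-module.

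Alternatively, one can bypass Proposition \ref{prop:squ} and appeal directly to Corollary \ref{thm:sim}: since $x$ acts as $0$ on $M_{2}$, the associated linear map $\tau$ is the zero map, so the criterion $\tau p_{2} - p_{2}\tau = p_{2}^{2}$ collapses to $0 = p_{2}^{2}$, which is exactly what we have just checked. Either route is routine; there is no genuine obstacle. The only point that requires a moment's care is to pin down correctly the $\bfk[x]$-module structure on $M_{2}$ (namely that $x$ annihilates it), after which the verification $p_{2}^{2} = 0$ is a one-line computation and the conclusion follows.
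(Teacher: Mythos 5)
Your proposal is correct and matches the paper's intended argument: the corollary is stated immediately after Proposition \ref{prop:squ} and follows from it by noting that $x$ acts as zero on $M_{2}=\mathcal{M}_{1}\oplus\mathcal{M}_{1}$ and that $p_{2}^{2}=0$, exactly as you verify. Your alternative route via Corollary \ref{thm:sim} is the same computation, since Proposition \ref{prop:squ} itself is just that criterion with $\tau=0$.
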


\begin{theorem}
Let $M=\bfk^{\oplus n}$. If $(M,p)$ is a $(\bfk[x],P)$-module, then some $0 \leq \ell \leq [\frac{n}{2}]$ exists such that
\begin{equation*}
M \cong (k,0)^{\oplus (n-2\ell)} \bigoplus (M_{2},p_{2})^{\oplus \ell}.
\end{equation*}
\end{theorem}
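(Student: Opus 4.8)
The plan is to reduce the statement to a classical normal-form result for square-zero endomorphisms. Since $M=\bfk^{\oplus n}$ is the $\bfk[x]$-module on which $x$ acts by zero, we have $\tau=0$, so by Proposition~\ref{prop:squ} the $(\bfk[x],P)$-module structures $(M,p)$ are exactly the pairs with $p^{2}=0$. Moreover, because $\tau=0$ is preserved by any $\bfk$-linear bijection, an $(\bfk[x],P)$-module isomorphism $(M,p)\to(M,q)$ is nothing more than a $\bfk$-linear automorphism $\phi$ with $\phi p=q\phi$. Hence the theorem is equivalent to the assertion that every square-zero $p\in\E(M)$ is conjugate to a direct sum of $\ell$ copies of the $2\times 2$ nilpotent Jordan block and $n-2\ell$ zero blocks, for a suitable $\ell$.

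First I would set $\ell:=\rank(p)=\dim(\im p)$. The relation $p^{2}=0$ means $\im p\subseteq\ker p$, so by rank--nullity $\ell=\dim(\im p)\le\dim(\ker p)=n-\ell$, which yields $0\le\ell\le[\tfrac{n}{2}]$, exactly the bound claimed.

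Next I would construct an adapted basis of $M$. Choose $v_{1},\dots,v_{\ell}\in M$ such that $pv_{1},\dots,pv_{\ell}$ is a basis of $\im p$; since $\im p\subseteq\ker p$, extend this to a basis $pv_{1},\dots,pv_{\ell},w_{1},\dots,w_{n-2\ell}$ of $\ker p$. I claim $v_{1},\dots,v_{\ell},pv_{1},\dots,pv_{\ell},w_{1},\dots,w_{n-2\ell}$ is then a basis of $M$: it has $n$ elements, and if a linear combination of them vanishes, applying $p$ kills the $pv_{i}$ and the $w_{j}$ and leaves a combination of $pv_{1},\dots,pv_{\ell}$ equal to zero, forcing the coefficients of the $v_{i}$ to vanish; what remains is a vanishing combination of the basis $pv_{1},\dots,pv_{\ell},w_{1},\dots,w_{n-2\ell}$ of $\ker p$, so all remaining coefficients vanish too. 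In this basis $p$ visibly decomposes as the direct sum of the $2\times 2$ blocks $v_{i}\mapsto pv_{i}\mapsto 0$, one for each $i$, each of which is precisely the module $(M_{2},p_{2})$, together with the $1\times 1$ zero blocks on the lines $\spa\{w_{j}\}$, each of which is $(\bfk,0)=(\mathcal{M}_{1},0)$ by Corollary~\ref{coro:onedim}.

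Finally, using that finite direct sums of $(\bfk[x],P)$-modules are again $(\bfk[x],P)$-modules and that $(M_{2},p_{2})$ is an $(\bfk[x],P)$-module (Proposition~\ref{prop:squ} with $s=n=2$), this decomposition reads $(M,p)\cong(\bfk,0)^{\oplus(n-2\ell)}\oplus(M_{2},p_{2})^{\oplus\ell}$, as desired. I do not anticipate a genuine obstacle: the one point requiring care is the verification that the displayed family is a basis, and everything else is immediate from $\tau=0$, $p^{2}=0$, and the structure results already established.
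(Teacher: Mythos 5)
Your proof is correct, and it follows the same overall strategy as the paper: reduce via Proposition~\ref{prop:squ} to the statement that a square-zero endomorphism of $\bfk^{\oplus n}$ is conjugate to a direct sum of $\ell$ nilpotent $2\times 2$ blocks and $n-2\ell$ zero blocks. The only difference is in how that normal form is obtained: the paper simply invokes the Jordan canonical form of $p$ (noting that $p^{2}=0$ forces all blocks to have size $1$ or $2$), whereas you prove the normal form directly, taking $\ell=\rank(p)$, using $\im p\subseteq\ker p$ to get the bound $\ell\le[\tfrac{n}{2}]$, and constructing the adapted basis $v_{1},\dots,v_{\ell},pv_{1},\dots,pv_{\ell},w_{1},\dots,w_{n-2\ell}$ by hand; your independence check is correct. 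Your version is slightly more self-contained (no appeal to Jordan theory, hence valid over any field, not just an algebraically closed one), and you also make explicit a point the paper leaves implicit: since $\tau=0$ commutes with every linear map, any linear automorphism intertwining two square-zero operators is automatically an isomorphism of $(\bfk[x],P)$-modules, so conjugation classes of such $p$ really do coincide with isomorphism classes of the modules $(M,p)$.
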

\begin{proof}
By Proposition \ref{prop:squ}, we have $p^{2}=0$. ${\rm Aut}_{\bfk}(M)={\rm Aut}_{\bfk}(\bfk^{\oplus n}) = {\rm gl}(n,\bfk)$ and some $\sigma \in {\rm gl}(n,\bfk)$ exists such that $\sigma^{-1}p\sigma$ has the canonical Jordan block form, and  $(\sigma^{-1}p\sigma)^{2}=0$. Thus, $p$ has exactly one eigenvalue $0$ and the sizes of the Jordan blocks of $p$ are $1$ or $2$. If $p$ has $\ell$ Jordan blocks of size $2$, then $p$ has $n-2\ell$ Jordan blocks of size $1$. Therefore, we obtain
\begin{equation*}
M \cong (k,0)^{\oplus (n-2\ell)} \bigoplus (M_{2},p_{2})^{\oplus \ell}.
\end{equation*}
\end{proof}

\noindent {\bf Acknowledgements:} The authors thank Professor Zongzhu Lin for his encouragement. This study was supported by the NNSF of China (Grant No. 11501466 ), FRF for the Central Universities XDJK2015C143, and the Ph.D. startup research foundation of Southwest University SWU114096. The authors thank
the anonymous referees for helpful comments.

\end{document}